\newcommand{\hF}{\widehat{F}}
\newcommand{\cE}{\mathcal{E}_h}
\newcommand{\cP}{\mathcal{P}}
\newcommand{\cbP}{\bld{\mathcal{P}}}
\newcommand{\cQ}{\mathcal{Q}}
\newcommand{\cS}{\mathcal{S}}
\newcommand{\cT}{\mathcal{T}}
\newcommand{\bfR}{\mathbf{R}}
\newcommand{\bv}{{\bm v}}
\newcommand{\Ome}{\Omega}
\newcommand{\oOme}{\overline{\Omega}}
\renewcommand{\div}{\mbox{\rm div\,}}
\newcommand{\curl}{\mbox{\rm curl\,}}
\newcommand{\tr}{\mbox{\rm tr}}
\newcommand{\mce}{\mathcal{E}_h}
\newcommand{\mct}{\mathcal{T}_h}
\newcommand{\bV}{\bm V}
\newcommand{\p}{\partial}
\newcommand{\Div}{{\rm div}}
\newcommand{\nab}{\nabla}
\newcommand{\bH}{\bm H}
\newcommand{\bC}{\bm C}
\newcommand{\bl}{\big\langle}
\newcommand{\br}{\big\rangle}
\newcommand{\Bl}{\Big\langle}
\newcommand{\Br}{\Big\rangle}
\newcommand{\bcurl}{{\bf curl}}
\newcommand{\Del}{\Delta}
\newcommand{\btau}{{\bm \tau}}
\newcommand{\bL}{{\bm L}}
\newcommand{\bP}{{\bm {P}}}
\newcommand{\bq}{\bm q}
\newcommand{\us}{\tilde}
\newcommand{\bA}{\bm A}
\newcommand{\bld}[1]{\boldsymbol{#1}}
\newcommand{\bg}{\bm g}
\newcommand{\bz}{\bm z}
\newcommand{\bmu}{\bm \mu}
\newcommand{\bW}{\bm W}
\newcommand{\pol}{\mathbb{P}}
\newcommand{\ext}{\mbox{ext}}
\newcommand{\hH}{\widehat{H}}
\newcommand{\cV}{\mathcal{V}}
\newcommand{\bcV}{{\boldsymbol{\mathcal V}}}
\numberwithin{equation}{section}
\newtheorem{definition}{Definition}[section]
\newtheorem{remark}{Remark}[section]
\newtheorem{remarks}{Remarks}[section]
\newtheorem{lemma}{Lemma}[section]
\newtheorem{theorem}{Theorem}[section]
\newtheorem{proposition}{Proposition}[section]
\newtheorem{corollary}{Corollary}[section]
\newtheorem{algorithm}{Algorithm}[section]
\begin{document}

\title[FE differential calculus and applications]{Discontinuous Galerkin 
finite element differential calculus and applications to numerical solutions 
of linear and nonlinear partial differential equations}
 
\author[X. Feng]{Xiaobing Feng}
\address{Department of Mathematics \\
         The University of Tennessee \\
         Knoxville, TN 37996.}
\email{xfeng@math.utk.edu}

\author[T. Lewis]{Thomas Lewis}
\address{Department of Mathematics \\
         The University of Tennessee \\
         Knoxville, TN 37996.}
\email{tlewis@math.utk.edu}

\author[M. Neilan]{Michael Neilan}
\address{Department of Mathematics \\
            University of Pittsburgh \\
            Pittsburgh, PA 15260}
\email{neilan@pitt.edu}

\thanks{The work of the first and second authors were partially supported by the NSF
grant DMS-071083. The research of the third author was partially supported by the  NSF grant DMS-1238711.}

\begin{abstract}
This paper develops a discontinuous Galerkin (DG) finite element differential 
calculus theory for approximating weak derivatives of Sobolev functions 
and piecewise Sobolev functions.  By introducing  numerical one-sided 
derivatives as building blocks,  various first and second order numerical 
operators such as the gradient, divergence, Hessian, and Laplacian operator are 
defined, and their corresponding calculus rules are established. Among the calculus rules are
product and chain rules, integration 
by parts formulas and the divergence theorem. Approximation properties and 
the relationship between the proposed DG finite element numerical derivatives 
and some well-known finite difference numerical derivative formulas on 
Cartesian grids are also established.  Efficient implementation of the 
DG finite element numerical differential operators is also proposed.  
Besides independent interest in numerical differentiation,  the primary 
motivation and goal of developing the DG finite element differential 
calculus is  to solve partial differential equations. 
It is shown that several existing finite element, finite difference and 
DG methods can be rewritten compactly using the proposed  
DG finite element differential calculus framework. Moreover,  new DG 
methods for linear and nonlinear PDEs are also obtained from the 
framework.
\end{abstract}

\keywords{Weak derivatives,  numerical derivatives,  differential calculus,  
discontinuous Galerkin finite elements,  linear and nonlinear PDEs, numerical solutions}
 
\subjclass{
65D25, 	
65N06, 	
65N12, 	
65N30	
}

\maketitle
\section{Introduction}\label{sec-1}
Numerical differentiation is an old but basic topic in numerical mathematics.  
Compared to the large amount of literature on numerical integration,  numerical 
differentiation is a much less studied topic. Given a differentiable function,  
the available numerical methods for computing its derivatives are indeed very limited.  
There are essentially only two such methods (cf.\,\cite{Stoer_Bulirsch93}). 
One method is to approximate derivatives by difference quotients. 
The other is to first approximate the given function (or its values 
at a set of points) by a more simple function (e.g., polynomial, rational 
function and piecewise polynomial) and then to use the derivative of the 
approximate function as an approximation to the sought-after derivative. 
The two types of classical methods work well if the given function is 
sufficiently smooth.  However,  the two classical methods produce large errors or divergent 
approximations if the given function is rough, which is often the case 
when the function is a solution of a linear or nonlinear partial differential 
equation (PDE).  

For boundary value and initial-boundary value problems, 
classical solutions often do not exist. Consequently, one has to deal with 
generalized or weak solutions, which are defined using a variational setting 
for linear and quasilinear PDEs.  Although numerical methods for PDEs 
implicitly give rise to methods for approximating weak derivatives 
(in fact, combinations of weak derivatives) of the solution functions 
(cf. \cite{BuffaOrtner09, BurmanErn08, EyckLew06, Wang_Ye11}),  
to the best of our knowledge,  
there is no systematic study and theory in the literature on how to
approximate weak derivatives of a given (not-so-smooth) function.  
Moreover, for linear second order PDEs of non-divergence form and 
fully nonlinear PDEs,  it is not possible to derive variational weak 
formulations using integration by parts.  As a result,  weak solution 
concepts for those types of PDEs are different. The best known and most 
successful one is the {\em viscosity solution} 
concept (cf.\,\cite{Crandall_Ishii_Lions92, Feng_Glowinski_Neilan12} 
and the references therein). To directly approximate viscosity solutions, 
which in general are only continuous functions,  one must approximate 
their derivatives in some appropriately defined sense 
offline (cf.\,\cite{Feng_Lewis12, LakkisPryer12}), and then substitute 
the numerical derivatives for the (formal) derivatives appearing in the PDEs.  
Clearly, to make such an intuitive approach work,  the key is to 
construct ``correct" numerical derivatives and 
to use them judiciously to build numerical schemes. 

This paper addresses the above two fundamental issues. 
The specific goals of this paper are twofold.  
{\em First}, we systematically develop a computational framework 
for approximating weak derivatives and a new discontinuous Galerkin (DG) 
finite element differential calculus theory.  Keeping in mind the 
approximation of fully nonlinear PDEs, we introduce locally defined, 
one-sided numerical derivatives for piecewise weakly differentiable 
functions.  Using the newly defined one-sided numerical derivatives 
as building blocks, we then define a host of first and second 
order sided numerical differential operators including the gradient,  
divergence,  curl, Hessian, and Laplace operators.  To ensure the 
usefulness and consistency of these numerical operators, 
we establish basic calculus rules for them. 
Among the rules are the product and the chain rule,
integration by parts formulas and the divergence theorem.  
We establish some approximation properties of the proposed 
DG finite element numerical derivatives and show that they coincide 
with well-known finite difference  derivative formulas on Cartesian grids.  
Consequently,  our DG finite element numerical derivatives are natural 
generalizations of well-known finite difference numerical derivatives on 
general meshes.  These results are of independent interest 
in numerical differentiation.  
{\em Second},  we present some applications of the proposed 
DG finite element differential calculus to build 
numerical methods for linear and nonlinear partial differential 
equations.  This is done based on a very simple idea; that is,  
we replace the (formal) differential operators in the given PDE by 
their corresponding DG finite element numerical operators and project 
(in the $L^2$ sense) the resulting equation onto the DG 
finite element space $V^h_r$.  We show that the resulting 
numerical methods not only recover several existing finite difference, 
finite element and DG methods, but also give rise 
to some new numerical schemes for both linear and nonlinear PDE problems.  

The remainder of this paper is organized as follows.  
In Section \ref{sec-2} we introduce the mesh and space notation 
used throughout the paper.  In Section \ref{sec-3} we give the 
definitions of our DG finite element numerical derivatives and various 
first and second order numerical differential operators. 
In Section \ref{sec-4} we establish an approximation property 
and various calculus rules for the DG finite element numerical derivatives 
and operators. In Section \ref{sec-5} we discuss the implementation 
aspects of the numerical derivatives and operators. Finally, in 
Section \ref{sec-6} we present several applications of the 
proposed DG finite element differential calculus to 
numerical solutions of prototypical linear and nonlinear PDEs including 
the Poisson equation, the biharmonic equation, the $p$-Laplace 
equation, second order linear elliptic PDEs in non-divergence form,  
first order fully nonlinear Hamilton-Jacobi equations, 
and second order fully nonlinear Monge-Amp\`ere equations. 

\section{Preliminaries}\label{sec-2}

Let $d$ be a positive integer,  $\Omega\subset\mathbf{R}^d$  be a bounded 
open domain, and $\mathcal{T}_h$ denote a locally quasi-uniform and 
shape-regular partition of $\Omega$ \cite{ciarlet78}. Let $\mce^I$ denote the set of all 
interior faces/edges of $\mct$, $\mce^B$ denote the set of all boundary 
faces/edges of $\mct$, and $\mce:=\mce^I\cup \mce^B$.

Let $p\in [1,\infty]$ and $m\geq 0$ be an integer. Define the following 
piecewise $W^{m,p}$ and piecewise $C^m$ spaces with respect to the mesh $\cT_h$:
\[
W^{m,p}(\cT_h):=\prod_{K\in \cT_h} W^{m,p}(K),\qquad
C^m(\cT_h):= \prod_{K\in \cT_h} C^m(\overline{K}).
\]
When $p=2$, we set $H^m(\cT_h): =W^{m,2}(\cT_h)$. 
We also define the analogous piecewise vector-valued spaces
as $\bH^m(\cT_h):=[H^m(\mct)]^d$, $\bW^{m,p}(\mct) = [W^{m,p}(\mct)]^d$,
 $\bC^m(\mct) = [C^m(\mct)]^d$,
and the matrix-valued spaces $\tilde{\bH}^m(\mct):=[H^m(\mct)]^{d\times d}$,
$\tilde{\bW}^{m,p}(\mct):=[W^{m,p}(\mct)]^{d\times d}$,
and $\tilde{\bC}^m(\mct) = [C^m(\mct)]^{d\times d}$.
The piecewise $L^2$-inner product over the mesh $\cT_h$ is given by
\[
(v,w)_{\mathcal{T}_h}:= \sum_{K\in \cT_h} \int_{K} v w\, dx,
\]
and for a set $\mathcal{S}_h \subset \mce$,
the piecewise $L^2$-inner product over $\mathcal{S}_h$ is given by
\begin{align*}
\bl v,w\br_{\mathcal{S}_h} :=\sum_{e\in \mathcal{S}_h} \int_e vw\, ds.
\end{align*}
Angled brackets without subscripts $\bl \cdot,\cdot\br$ represent 
the dual pairing between some Banach space and its dual.

For a fixed integer $r \geq 0$, we define the standard discontinuous 
Galerkin (DG) finite element space
$V^h_r \subset W^{m,p}(\cT_h)\subset L^2 (\Ome)$ by
\[
V^h_r := \prod_{K \in \mathcal{T}_h} \pol_{r} (K),
\]
where $ \pol_{r} (K)$ denotes the set of all polynomials on $K$ with
degree not exceeding $r$.  The analogous vector-valued and matrix valued 
DG spaces are given by $\bV_r^h:=[V_r^h]^d$ and 
$\tilde{\bV}_r^h:=[V_r^h]^{d\times d}$. In addition, we define
\[
\cV_h := W^{1,1}(\mct)\cap C^0(\mct),
\]
$\bcV_h:=[\cV_h]^d$, and $\tilde{\bcV}_h :=[\cV_h]^{d\times d}.$  We note
that $V^h_r\subset \cV_h$.
We denote by $\cP_r^h : L^2(\Omega) \to V_r^h$ the $L^2$ projection operator onto $V_r^h$, 
$\cbP_r^h : \left[ L^2(\Omega) \right]^d \to \bm{V}_r^h$ the $L^2$ projection operator onto $\bm{V}_r^h$ and
$\widetilde{\cbP}_r^h : \left[ L^2(\Omega)\right]^{d \times d} \to \widetilde{\bm{V}}_r^h$ the $L^2$ projection operator onto 
$\widetilde{\bm{V}}_r^h$.

Let $K, K'\in \cT_h$ and $e=\partial K\cap \partial K'$. Without loss of
 generality, we assume that the global labeling number of $K$ is smaller than 
that of $K'$.  We then introduce the following standard jump and average notations
across the face/edge $e$:
\begin{alignat*}{4}
[v] &:= v|_K-v|_{K'} 
\quad &&\mbox{on } e\in \cE^I,\qquad
&&[v] :=v\quad 
&&\mbox{on } e\in \cE^B,\\
\{v\} &:=\frac12\bigl( v|_K +v|_{K'} \bigr) \quad
&&\mbox{on } e\in \cE^I,\qquad
&&\{v\}:=v\quad 
&&\mbox{on } e\in \cE^B
\end{alignat*}
for $v\in \cV_h$. We also define $n_e:=n_K|_e=-n_{K'}|_e$ as the unit normal on $e$.

\section{Definitions of discrete differential operators}\label{sec-3}

Let $v\in \cV_h$.  For $e\in \cE^I$, that is, 
$e=\partial K\cap \partial K'\in \cE^I$ for some $K,K'\in \cT_h$, we write
$n_e=\bigl(n_e^{(1)}, n_e^{(2)}, \ldots, n_e^{(d)} \bigr)^t$ to be
the unit normal of $e$.
We then define the following three trace operators on $e$ in the direction $x_i$:
\begin{align}\label{e2.1}
&\cQ_i^-(v)(x):= \begin{cases}
\displaystyle{\lim_{y\in K\atop y\to x} v(y)} &\qquad\mbox{if }n_e^{(i)}< 0,\\
\displaystyle{\lim_{y\in K'\atop y\to x}v(y)} &\qquad\mbox{if }n_e^{(i)}\geq 0,
                \end{cases} \\
&\cQ_i^+(v)(x):= \begin{cases}
\displaystyle{\lim_{y\in K'\atop y\to x} v(y)}&\qquad\mbox{if } n_e^{(i)}< 0,\\ 
\displaystyle{\lim_{y\in K\atop y\to x} v(y)}&\qquad\mbox{if } n_e^{(i)}\geq 0,
                \end{cases} \label{e2.2} \\
&\cQ_i(v)(x):= \frac12 \Bigl( \cQ_i^-(v)(x) + \cQ_i^+(v)(x) \Bigr) \label{e2.3}
\end{align}
for any $x\in e$ and $i=1,2,\ldots,d$. We note that $\cQ_i^-$ and $\cQ_i^+$
can be regarded respectively as the ``left" and ``right" limit of
$v$ at $x\in e$ in the direction of $x_i$. 
If $e\in \cE^B$, we simply let
\begin{equation}\label{2.4}
\cQ_i^-(v)(x)=\cQ_i^+(v)(x)=\cQ_i(v)(x):= \lim_{y\in \Omega\atop y\to x} v(y) 
\qquad\forall x\in e.
\end{equation}

\begin{remark}\label{QidentityRemark}
On an interior edge $e\in \mathcal{E}_h^I$, we may alternatively write
\begin{align}\label{Qidentity}
\cQ^\pm_i(v) = \{v\}\pm\frac12 {\rm sgn}(n_e^{(i)})[v],\quad \text{where}\quad 
{\rm sgn}(n_e^{(i)}) = 
\left\{
\begin{array}{cc}
1 &\text{ if } n_e^{(i)} \ge 0,\\
-1 & \text{ if }n_e^{(i)} <0.
\end{array}
\right.
\end{align}
\end{remark}

With the help of the trace operators $\cQ_i^-, \cQ_i^+$ and $\cQ_i$ we 
are ready to introduce our discrete partial derivative 
operators $\partial_{h,x_i}^-, \partial_{h,x_i}^+$, 
$\partial_{h,x_i}: \cV_h\to V^h_r$.

\begin{definition}\label{def1}
For any $v\in \cV_h$, we define the discrete partial derivatives
$\partial_{h,x_i}^-  v$, $\partial_{h,x_i}^+ v$, $\partial_{h,x_i} v \in V^h_r$ by 
\begin{align}
\bigl(\partial_{h,x_i}^\pm v,\varphi_h\bigr)_{\cT_h}
&:= \bigl\langle \cQ_i^\pm (v) n^{(i)}, [\varphi_h] \bigr\rangle_{\mce}
-\bigl(v, \partial_{x_i}\varphi_h \bigr)_{\cT_h} \label{e2.5}   \\
&\hskip 1.4in
+  \bigl\langle \gamma^\pm_i  [v],[\varphi_h] \bigr\rangle_{\mce^I}
\qquad\forall \varphi_h\in V^h_r,  \nonumber \\
\partial_{h,x_i} v &:= \frac12\Bigl(\partial_{h,x_i}^- v 
+ \partial_{h,x_i}^+ v\Bigr), \label{e2.7} 
\end{align}
for $i=1,2,\ldots, d$. Here, $\partial_{x_i}$ denotes the usual (weak) 
partial derivative operator in the direction $x_i$, $n^{(i)}$ is the 
piecewise constant function satisfying $n^{(i)}|_e = n^{(i)}_e$
and $\gamma^-_i$ and $\gamma^+_i$ are piecewise constants with respect 
to the set of interior edges.
\end{definition}

In addition, we define the discrete partial derivatives
when  boundary data is provided.
\begin{definition}\label{def1b}
Let $g\in L^1(\p\Ome)$ be given.  Then for any $v\in \cV_h$, we define the 
discrete partial derivatives
$\partial_{h,x_i}^{-,g}  v, \partial_{h,x_i}^{+,g} v,\partial_{h,x_i}^g v
\in V^h_r$ by 
\begin{align}\label{dgpmDef}
\bigl(\partial_{h,x_i}^{\pm,g} v,\varphi_h\bigr)_{\cT_h}
&:= \big(\p_{h,x_i}^\pm v,\varphi_h)_{\mct} +\bl (g-v)n^{(i)},\varphi_h\br_{\mce^B}
\qquad\forall \varphi_h\in V^h_r, \\
\label{dgDef}
\partial_{h,x_i}^g v &:= \frac12\Bigl(\partial_{h,x_i}^{-,g} v 
+ \partial_{h,x_i}^{+,g} v\Bigr).
\end{align}

\end{definition}

\begin{remarks}\label{rem1}\
\begin{enumerate}
\item[(a)] Since every function $v\in \cV_h$ has a well-defined trace 
in $L^1(\p K)$ and every function $\varphi_h\in V^h_r$ has a well-defined 
trace in $L^\infty(\p K)$ for all $K\in \cT_h$, the last term on  
the right-hand side of \eqref{e2.5} is well defined.

\item[(b)] Since $V^h_r$ is a totally discontinuous piecewise polynomial space,
the discrete derivatives $\partial_{h,x_i}^\pm v$
can also be written in their equivalent {\em local} versions:
\begin{align}\label{e2.5a}
\bigl(\partial_{h,x_i}^\pm v,\varphi_h\bigr)_K
&=\bigl\langle \cQ_i^\pm(v) n_K^{(i)}, \varphi_h \bigr\rangle_{\partial K}
-\bigl(v, \partial_{x_i}\varphi_h \bigr)_K \\
&\hskip 0.8in
+\nonumber\sum_{e\subset \partial K\backslash \p \Ome} 
\gamma_{i,e}^\pm \bigl\langle [v],[\varphi_h] \bigr\rangle_e 
\qquad \forall \varphi_h\in \pol_r(K)  
\end{align}
for $i=1,2,\ldots, d$ and $K\in \cT_h$.   Here, $\gamma^\pm_{i,e} = \gamma^\pm_i|_{e}$.
 
\item[(c)] The discrete derivatives 
$\partial_{h,x_i}^- v, \partial_{h,x_i}^+ v$ and $\partial_{h,x_i} v$
can be regarded, respectively, as ``left", ``right", and ``central" 
discrete partial derivatives of $v$ with respect to $x_i$.
The definitions are analogous to the weak derivative definition.

\item[(d)] We note that the discrete one-sided partial derivatives are 
defined for functions in the space $\cV_h$, 
in particular, for functions in the DG finite element
space $V^h_r\subset \cV_h$.  

\item[(e)]
By the identity \eqref{Qidentity}, we have
\begin{align} \nonumber 
\big(\partial_{h,x_i}^\pm v, \varphi_h\big)_{\mct}
& =\Bl \{v\} n^{(i)}, [\varphi_h]\Br_{\mce}
+\Bl \big(\gamma^\pm_i \pm \frac12 |n^{(i)}|\big) [v], [\varphi_h]\Br_{\mce^I}
-\big( v,\p_{x_i} \varphi_h\big)_{\mct}.
\end{align}
Therefore, $\p_{h,x_i}^+ v = \p_{h,x_i}^- v$ provided 
$\gamma^+_{i,e}-\gamma^-_{i,e} = -|n^{(i)}|$ for all $e\in \mathcal{E}_h^I$.
\end{enumerate}
\end{remarks}

\begin{definition}\label{def2}
We define the following first order discrete operators:
\begin{subequations}
\begin{align}\label{e2.8}
\nabla_h^\pm v &:=\bigl(\partial_{h,x_1}^\pm v, \partial_{h,x_2}^\pm v,\cdots,
\partial_{h,x_d}^\pm v \bigr)^t,\\
\nabla_h v &:= \frac12 \Bigl( \nabla_h^- v + \nabla_h^+ v \Bigr),    \label{e2.8b}\\
\div_h^\pm\bv &:= \partial_{h,x_1}^\pm v_1+\partial_{h,x_2}^\pm v_2 
+\cdots + \partial_{h,x_d}^\pm v_d, \label{e2.11} \\
\div_h\bv &:= \frac12\bigl( \div_h^-\bv +\div_h^+\bv \bigr) \label{e2.11b}
\end{align}
for any  $v\in \cV_h$ and  $\bv=(v_1,v_2,\cdots, v_d)^t
\in \bcV_h$.
In addition, we define the discrete curl operators for $\bv\in \bcV_h$ 
and $v\in \cV_h$:
\begin{align} \label{e2.14}
\bcurl_h^\pm \bv&:=\Big(\p_{h,x_2}^\pm v_3-\p^\pm_{h,x_3} v_2,
\p_{h,x_3}^\pm v_1-\p_{h,x_1}^\pm v_3,\p_{h,x_1}^\pm v_2-\p_{h,x_2}^\pm v_1\Big)^t
\end{align}
when $d=3$,  and 
\begin{align} \label{e2.15}
\curl_h^\pm \bv := \p_{h,x_1}^\pm v_2 -  \p_{h,x_2}^\pm v_1,  \quad
\bcurl_h^\pm v :=\Big(\p_{h,x_2}^\pm v,  -\p_{h,x_1}^\pm v \Big)^t 
\end{align}
when $d=2$.  We also set 
\begin{align} \label{e2.16}
\bcurl_h \bv := \frac12\Bigl( \bcurl_h^- \bv + \bcurl_h^+ \bv \Bigr), \quad
\curl_h \bv = \frac12\bigl( \curl_h^- \bv + \curl_h^+ \bv \bigr).
\end{align}
\end{subequations}
When boundary data $g\in L^1(\p\Ome)$ is provided, 
the analogous operators are given by 
\begin{subequations}
\label{FirstOrderGDef}
\begin{align} \label{gNabDef_pm}
\nabla_{h,g}^{\pm} v &:=\bigl(\partial_{h,x_1}^{\pm,g} v, \partial_{h,x_2}^{\pm,g} v,\cdots, 
\partial_{h,x_d}^{\pm,g} v \bigr)^t, \\
\nabla_{h,g} v &:= \frac12 \Bigl( \nabla_h^{-,g} v + \nabla_h^{+,g} v \Bigr). \label{gNabDef}
\end{align}
In addition, for given $\bg=(g_1,g_2,\ldots,g_d)^t\in \bL^1(\p\Ome)$, we set
\begin{align}
\Div_{h,\bg}^\pm \bv &:= \p_{h,x_1}^{\pm,g_1} v_1+\p_{h,x_2}^{\pm,g_2}v_2
+\cdots+\p_{h,x_d}^{\pm,g_d}v_d,\\
\Div_{h,\bg} \bv &:=\frac12 \big(\Div_{h,\bg}^- \bv +\Div_{h,\bg}^+ \bv\big).
\end{align}

\end{subequations}

\end{definition}

\begin{definition}\label{def3}
Denote by $D_h^\pm$ and $D_h$ the transposes
of the operators $\nab_h^\pm$ and $\nab_h$, respectively; that is,
$D_h^\pm v = (\nabla^\pm_h v)^t$ and $D_h v = (\nabla_h v)^t$.
We then define the following second order discrete operators:
\begin{subequations} \label{SecondOrderDef}
\begin{alignat}{2}  
&D_h^{-\pm} v := D_h^-\nabla_h^\pm v,  &&\qquad D_h^{+\pm} v := D_h^+\nabla_h^\pm v, \label{s6.6_hessians}  \\
&\Delta_h^{-\pm} v := \tr\bigl(D^{-\pm}_h v),  &&\qquad \Delta_h^{+\pm} v := \tr\bigl(D^{+\pm}_h v),\\
&D_h^2 v :=D_h \nabla_h v,  &&\qquad \Delta_h v :=   \tr\bigl( D_h^2 v \bigr)
\end{alignat}
\end{subequations}
for any  $v\in \cV_h$. Here, $\tr(\cdot)$ denotes the
matrix trace operator.  When boundary data $g\in L^1(\p\Ome)$ is given, we define
\begin{subequations}
\begin{alignat}{3}
&D_{h,g}^{-\pm} v := D_h^-\nabla_{h,g}^\pm v,\qquad
&&\qquad D_{h,g}^{+\pm} v := D_h^+\nabla_{h,g}^\pm v, \\
&\Delta_{h,g}^{-\pm} v := \tr\bigl(D_{h,g}^{-\pm} v\bigr), \qquad
&&\qquad \Delta_{h,g}^{+\pm} v := \tr (D_{h,g}^{+\pm}v), \\
&D_{h,g}^2 v := D_h \nabla_{h,g} v ,  
&&\qquad \Delta_{h,g} v :=  \tr(D^2_{h,g} v).
\end{alignat}
\end{subequations}

\end{definition}

\begin{remarks}\label{rem2}\
\begin{enumerate}
\item[(a)] The matrix-valued functions $D_h^{--}v, D_h^{++}v, D_h^{-+}v$ and $D_h^{+-}v$ 
define four copies of 
discrete $d\times d$ Hessian matrices of the function $v$. 
Similarly, $\Delta_h^{--} v, \Delta_h^{++} v, \Delta_h^{-+} v$ 
and $\Delta_h^{+-} v$ are four copies of discrete Laplacians of $v$.

\item[(b)] Since $\nabla_h^- v,\, \nabla_h^+ v \in \bV^h_r\subset
\bcV_h$,  all the above second order discrete operators are well-defined.

\item[(c)] It is simple to see that the definitions of the discrete Laplacians given in Definition {\rm \ref{def3}}
are equivalent to $\Del_h^{-\pm} v = \Div_h^- \nab_h^\pm v$, $\Del_h^{+\pm} v = \Div_h^+ \nab_h^\pm v$
and $\Del_h v = \Div_h \nab_h v$.
\end{enumerate}

\end{remarks}

\section{Properties of discrete differential operators}\label{sec-4}
In this section we shall establish a number of properties for the DG 
finite element (FE) discrete derivatives defined in Definitions \ref{def1} and \ref{def2}. 
We start with some characterization results for the DG FE discrete 
derivatives.  We then establish an approximation property followed by several
basic calculus rules such as product and chain rules, the integration 
by parts formula, the divergence theorem and the relationship of the 
DG FE discrete derivatives with
standard finite difference derivative formulas. 

\subsection{Characterization of DG finite element derivatives}\label{sec-4.0}

For any $v\in H^1(\Omega)$, we have $\cQ_i^\pm(v)|_e = v|_e$ and
$[v]=0$ on $e\in \cE^I$.  Hence,
\[
\bigl(\partial_{h,x_i}^\pm v,\varphi_h\bigr)_{\cT_h} = \bl  v \, n^{(i)}, [\varphi_h]\br_{\mce}
-\bigl(v, \partial_{x_i}\varphi_h \bigr)_{\cT_h} \qquad\forall \varphi_h\in V^h_r.
\]
Integration by parts element-wise immediately yields 
\begin{align}\label{e4.1}
\bigl(\partial_{h,x_i}^\pm v,\varphi_h\bigr)_{\cT_h}
&=\bigl(\partial_{x_i} v,\varphi_h\bigr)_{\cT_h}
\qquad\forall \varphi_h\in V^h_r,
\intertext{and therefore}
\label{e4.1a}
\bigl(\partial_{h,x_i}^\pm v,\varphi_h\bigr)_K
&=\bigl(\partial_{x_i} v,\varphi_h\bigr)_K
\qquad\forall \varphi_h\in \pol_r(K), \, \forall K\in \cT_h. 
\end{align}
Hence, we have the following proposition.

\begin{proposition}\label{prop0a}
For any $v\in \cV_h\cap H^1(\Omega)$, $\partial_{h,x_i}^\pm v$ and 
$\partial_{h,x_i} v$ coincide with the $L^2$-projection of $\partial_{x_i} v$ 
onto $V^h_r$. We write $\partial_{h,x_i}^\pm v=\partial_{h,x_i} v
=\mathcal{P}^h_r \partial_{x_i} v$, 
where $\cP_r^h$ denotes the $L^2$ projection onto $V_r^h$.
\end{proposition}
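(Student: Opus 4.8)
The proposition follows almost immediately from the two displayed identities \eqref{e4.1} and \eqref{e4.1a} that precede it, so the plan is to organize those observations into a clean argument rather than to prove anything substantially new. The key point is that for $v\in H^1(\Omega)$ the trace is single-valued across interior faces, so $[v]=0$ on every $e\in\mce^I$ and $\cQ_i^\pm(v)|_e=v|_e$; this collapses the defining relation \eqref{e2.5} to the form already displayed in the excerpt, with the penalty term $\langle\gamma_i^\pm[v],[\varphi_h]\rangle_{\mce^I}$ vanishing and the trace operators $\cQ_i^\pm$ being replaced by the common trace $v$.

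First I would record that, because of the cancellation just noted, the relation
\[
\bigl(\partial_{h,x_i}^\pm v,\varphi_h\bigr)_{\cT_h}
= \bl v\,n^{(i)},[\varphi_h]\br_{\mce}
-\bigl(v,\partial_{x_i}\varphi_h\bigr)_{\cT_h}
\qquad\forall\varphi_h\in V^h_r
\]
holds and is independent of the choice of sign $\pm$ and of the penalty parameters $\gamma_i^\pm$. Next I would integrate by parts elementwise: on each $K\in\cT_h$,
\[
-\bigl(v,\partial_{x_i}\varphi_h\bigr)_K
= \bigl(\partial_{x_i}v,\varphi_h\bigr)_K
-\bl v\,n_K^{(i)},\varphi_h\br_{\partial K},
\]
which is legitimate since $v\in H^1(K)$ and $\varphi_h\in\pol_r(K)$. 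Summing over $K\in\cT_h$ and reassembling the boundary contributions into the face-based form $\bl v\,n^{(i)},[\varphi_h]\br_{\mce}$ — here one uses that $v$ has a single-valued trace so the two contributions from an interior face combine correctly with the jump $[\varphi_h]$ — shows that the boundary terms exactly cancel, leaving \eqref{e4.1}. Restricting the test space to $\pol_r(K)$ on a fixed element and using the total discontinuity of $V^h_r$ then gives the local identity \eqref{e4.1a}.

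Finally I would interpret \eqref{e4.1a} as the defining property of the $L^2$ projection: since $\partial_{h,x_i}^\pm v\in V^h_r$ and it satisfies $\bigl(\partial_{h,x_i}^\pm v,\varphi_h\bigr)_K=\bigl(\partial_{x_i}v,\varphi_h\bigr)_K$ for all $\varphi_h\in\pol_r(K)$ and all $K$, it is by definition the elementwise (hence global) $L^2$ projection $\cP_r^h\partial_{x_i}v$ of the weak derivative $\partial_{x_i}v$. Because the right-hand side does not depend on the sign, $\partial_{h,x_i}^+v=\partial_{h,x_i}^-v$, and therefore by \eqref{e2.7} the central derivative $\partial_{h,x_i}v$ equals the same projection. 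I do not anticipate a genuine obstacle here; the only point requiring care is the bookkeeping when reassembling the elementwise boundary integrals into the face-based inner product over $\mce$, ensuring the interior-face orientations and the jump $[\varphi_h]$ are matched so that the boundary terms cancel precisely against $\bl v\,n^{(i)},[\varphi_h]\br_{\mce}$.
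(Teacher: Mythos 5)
Your proposal is correct and follows the same route as the paper: use $[v]=0$ and $\cQ_i^\pm(v)=v$ for $v\in H^1(\Omega)$ to collapse the defining relation \eqref{e2.5}, integrate by parts element-wise so the face terms cancel, and read off the resulting identity \eqref{e4.1} (equivalently \eqref{e4.1a}) as the defining property of the $L^2$ projection. The only difference is that you spell out the face-reassembly bookkeeping that the paper compresses into ``integration by parts element-wise immediately yields,'' which is a fine elaboration, not a different argument.
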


From the above proposition, we easily derive the following corollary.
\begin{corollary}\label{cor4.1}
Let $v_h\in V^h_\ell\cap H^1(\Ome)$ with $0\le  \ell \le r+1$.
Then $\p^-_{h,x_i} v_h = \p^+_{h,x_i} v_h
=\p_{h,x_i} v_h = \p_{x_i} v_h \in V_{\ell-1}^h$, 
where $V_{-1}^h := \{ 0 \}$, the set with only the zero function.
\end{corollary}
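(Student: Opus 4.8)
The plan is to apply Proposition \ref{prop0a} and then identify the resulting $L^2$ projection explicitly. The corollary concerns $v_h \in V^h_\ell \cap H^1(\Ome)$ with $0 \le \ell \le r+1$; since such $v_h$ lies in $\cV_h \cap H^1(\Ome)$, Proposition \ref{prop0a} applies directly and gives $\p^-_{h,x_i} v_h = \p^+_{h,x_i} v_h = \p_{h,x_i} v_h = \cP_r^h \p_{x_i} v_h$. Thus the entire statement reduces to showing that the $L^2$ projection acts as the identity here, i.e.\ $\cP_r^h \p_{x_i} v_h = \p_{x_i} v_h$ and that this function lives in $V^h_{\ell-1}$.

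The key observation is that on each element $K \in \cT_h$ we have $v_h|_K \in \pol_\ell(K)$, so its classical partial derivative $\p_{x_i} v_h|_K \in \pol_{\ell-1}(K)$ (interpreting $\pol_{-1}(K) = \{0\}$ when $\ell = 0$). First I would note that differentiation lowers polynomial degree by exactly one, so $\p_{x_i} v_h \in V^h_{\ell-1}$. Second, since $\ell - 1 \le r$ by the hypothesis $\ell \le r+1$, we have $V^h_{\ell-1} \subset V^h_r$, so $\p_{x_i} v_h$ already belongs to the target space $V^h_r$ of the projection. Because the $L^2$ projection fixes every element of its range, $\cP_r^h \p_{x_i} v_h = \p_{x_i} v_h$. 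Chaining these identities with the conclusion of Proposition \ref{prop0a} yields all four equalities in the corollary and the membership $\p_{x_i} v_h \in V^h_{\ell-1}$.

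There is really no substantive obstacle here; the result is a routine corollary, and the only point requiring mild care is the boundary case $\ell = 0$. When $\ell = 0$, functions in $V^h_0 \cap H^1(\Ome)$ are globally continuous piecewise constants on a connected domain, hence constant, so their weak derivative vanishes and everything collapses into $V^h_{-1} = \{0\}$, consistent with the stated convention. I would simply remark that the convention $\pol_{-1}(K) = \{0\}$ (equivalently $V^h_{-1} = \{0\}$) makes the degree-counting argument uniform across all admissible $\ell$, and that the upper constraint $\ell \le r+1$ is exactly what guarantees $\p_{x_i} v_h \in V^h_r$ so that the projection becomes trivial.
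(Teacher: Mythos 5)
Your proof is correct and follows exactly the route the paper intends: the paper gives no explicit argument (it states the corollary is "easily derived" from Proposition \ref{prop0a}), and your chain --- apply Proposition \ref{prop0a}, observe that $\p_{x_i} v_h \in V^h_{\ell-1} \subset V^h_r$ by degree counting with $\ell \le r+1$, and conclude the projection $\cP^h_r$ acts as the identity --- is precisely that derivation. Your additional care with the $\ell = 0$ case is consistent with the paper's convention $V^h_{-1} = \{0\}$.
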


For an arbitrary piecewise 
function $v\in \cV_h$, 
the above characterization results are not expected to hold in general
because $\partial_{x_i} v$ may not exist. Below we shall derive  
a similar characterization to \eqref{e4.1} for $\partial_{h,x_i}^\pm v$ 
and $\partial_{h,x_i} v$ when $v$ is an arbitrary function in $\cV_h$.
 
For any $v\in \cV_h$, let $\mathcal{D}_{x_i} v$ denote the distributional 
derivative of $v$ with respect to $x_i$ (which does exist). Let $\Xi
\subset \Omega$ be a $(d-1)$-dimensional continuous and bounded 
surface.  We define the {\em delta function} $\delta(\Xi, g, x)$
 of variable strength supported on $\Xi$ by 
(cf. \cite{Engquist_Tornberg_Tsai05})
\begin{align}\label{e4.0a}
\bigl\langle \delta (\Xi, g, \cdot), \varphi \bigr\rangle
:= \int_\Xi g(s) \varphi(x(s))\, ds  
\qquad \forall \varphi\in C^0(\Omega),
\end{align}
where $x(s) \in \Xi$.  We also extend the above definition to 
test functions from $V^h_r$ as follows 
\begin{align}\label{e4.0b}
\bigl\langle \delta (\Xi, g, \cdot), \varphi_h \bigr\rangle
:= \int_\Xi g(s) \bigl\{\varphi_h(x(s))\bigr\}\, ds  
\qquad \forall \varphi_h\in V^h_r. 
\end{align}

Using $\delta(\Xi, g, x)$ we give the following characterization 
for $\mathcal{D}_{x_i} v$.
\begin{lemma}\label{lem4.0c}
For every $v\in \cV_h$ there holds the following representation formula:
\begin{align}\label{e4.1b}
\mathcal{D}_{x_i} v= \sum _{K\in \cT_h} \p_{x_i} v\, \chi_K 
- \sum_{e\in \cE^I} n_e^{(i)} \delta(e, [v], x) \qquad \mbox{for a.e. } x\in \Omega,
\end{align} 
where $\chi_K$ denotes the characteristic function supported on $K$. 
\end{lemma}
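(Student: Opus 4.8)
The plan is to verify the representation formula \eqref{e4.1b} by testing the distributional derivative $\mathcal{D}_{x_i} v$ against an arbitrary smooth compactly supported test function and matching both sides. Recall that the distributional derivative is defined by $\langle \mathcal{D}_{x_i} v, \varphi\rangle = -\int_\Omega v\,\partial_{x_i}\varphi\,dx$ for all $\varphi\in C_c^\infty(\Omega)$. Since $v\in \cV_h$ is only piecewise smooth (piecewise $W^{1,1}$ and globally $C^0$ on each element closure), I cannot integrate by parts over all of $\Omega$ at once; instead I would break the integral $\int_\Omega v\,\partial_{x_i}\varphi\,dx$ into a sum over the elements $K\in\cT_h$, since $\bigcup_K \overline{K} = \overline{\Omega}$ and the elements overlap only on lower-dimensional faces.

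The key computation is the element-wise integration by parts. On each $K$, where $v$ is smooth enough to apply the divergence theorem, I would write
\begin{align*}
\int_K v\,\partial_{x_i}\varphi\,dx
= -\int_K (\partial_{x_i} v)\,\varphi\,dx
+ \int_{\partial K} v\,\varphi\, n_K^{(i)}\,ds,
\end{align*}
where $n_K^{(i)}$ is the $i$-th component of the outward unit normal to $K$. Summing over all $K$ then produces two contributions: the volume terms assemble into $-\sum_K \int_K (\partial_{x_i} v)\varphi\,dx = -\langle \sum_K \partial_{x_i} v\,\chi_K, \varphi\rangle$, which (after the overall sign from the definition of $\mathcal{D}_{x_i}$) matches the first term on the right of \eqref{e4.1b}. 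The boundary terms must be collected edge by edge: each interior face $e=\partial K\cap\partial K'$ is shared by two elements, and on $e$ the outward normals satisfy $n_K|_e = -n_{K'}|_e = n_e$. Since $\varphi$ is continuous across $e$, the two boundary contributions combine to $\int_e (v|_K - v|_{K'})\,\varphi\, n_e^{(i)}\,ds = \int_e [v]\,\varphi\, n_e^{(i)}\,ds$, using the jump notation from Section~\ref{sec-2}. Boundary faces $e\in\cE^B$ contribute nothing because $\varphi$ has compact support in $\Omega$.

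Finally I would recognize the assembled interior-face term $\sum_{e\in\cE^I}\int_e [v]\,\varphi\,n_e^{(i)}\,ds$ as precisely $\sum_{e\in\cE^I} n_e^{(i)}\langle\delta(e,[v],\cdot),\varphi\rangle$ by the definition \eqref{e4.0a} of the variable-strength delta function, noting that $n_e^{(i)}$ is constant on each straight face $e$ and can be pulled outside. Tracking the sign from $-\int_\Omega v\,\partial_{x_i}\varphi$ then yields the $-\sum_{e\in\cE^I} n_e^{(i)}\delta(e,[v],x)$ term, completing the match. The main obstacle I anticipate is not conceptual but technical: justifying that the element-wise divergence theorem is legitimate for $v\in W^{1,1}(K)$ (so that traces on $\partial K$ exist in $L^1$ and the boundary integrals make sense), and ensuring the delta-function pairing is interpreted consistently with \eqref{e4.0a} for the smooth test functions used here. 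Since Remark~\ref{rem1}(a) already guarantees well-defined $L^1$ traces on each $\partial K$, this regularity bookkeeping should go through cleanly, and the identity holds as an equality of distributions, hence for a.e.\ $x\in\Omega$.
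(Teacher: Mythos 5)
Your proposal is correct and follows essentially the same route as the paper's own proof: testing the distributional derivative against smooth test functions vanishing at the boundary, integrating by parts element-wise (justified by the $W^{1,1}(K)$ trace regularity), combining the two contributions on each interior face into $[v]\,n_e^{(i)}$ while boundary faces drop out, and identifying the resulting face sums with the delta-function pairing \eqref{e4.0a}. The only cosmetic difference is your choice of $C_c^\infty(\Omega)$ test functions where the paper uses $C^1(\Omega)\cap H^1_0(\Omega)$.
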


\begin{proof}
By the definition of $\mathcal{D}_{x_i} v$ and integration by
parts on each $K\in \Omega$,  we have for any $\varphi\in C^1(\Omega)\cap H^1_0(\Ome)$
\begin{align*}
\bigl\langle \mathcal{D}_{x_i} v, \varphi \bigr\rangle
&:=-\bigl\langle v, \p_{x_i} \varphi \bigr\rangle
=-\bigl( v, \p_{x_i} \varphi \bigr)_{\cT_h} \\
&=-\sum_{K\in \cT_h} \bigl\langle v, \varphi n_K^{(i)} \bigr\rangle_{\p K} 
  + \bigl( \p_{x_i} v, \varphi \bigr)_{\cT_h} \\
&= -\sum_{e\in \cE^I}  \bigl\langle [v], \varphi n_e^{(i)} \bigr\rangle_e
  + \bigl( \p_{x_i} v, \varphi \bigr)_{\cT_h} \\
&= -\Bigl\langle \sum_{e\in\cE^I} n_e^{(i)}\delta(e, [v], \cdot),
\varphi\Bigr\rangle
  + \Bigl\langle \sum_{K\in \cT_h} \p_{x_i} v\chi_K, \varphi \Bigr\rangle. 
\end{align*}
Here we have used the fact that $v\in W^{1,1}(K)$ for every $K\in \cT_h$. 
Clearly the above identity infers \eqref{e4.1b}. The proof is complete.
\end{proof}

\begin{proposition}\label{prop0b} 
Set $\gamma^\pm =0$ in \eqref{e2.5}. For any $v\in \cV_h$, 
$\partial_{h,x_i} v$ coincides with the $L^2$-projection of 
$\mathcal{D}_{x_i} v$ onto $V^h_r$ in the sense that
\begin{align}\label{e4.1e}
\bigl( \p_{h,x_i} v, \varphi_h \bigr)_{\cT_h} 
= \bigl\langle \mathcal{D}_{x_i} v, \varphi_h \bigr\rangle 
\qquad \forall \varphi_h\in V^h_r,
\end{align}
where the right hand-side is understood according to \eqref{e4.0b}.
We write $\partial_{h,x_i} v=\mathcal{P}^h_r \mathcal{D}_{x_i} v$. 
\end{proposition}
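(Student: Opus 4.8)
The plan is to reduce both sides of \eqref{e4.1e} to the same explicit expression, namely a volume integral of the piecewise derivative $\p_{x_i} v$ together with interior-edge integrals of the jump $[v]$ against the average $\{\varphi_h\}$. First I would compute the left-hand side. Averaging the two one-sided definitions \eqref{e2.5} with $\gamma^\pm=0$ according to \eqref{e2.7}, the symmetric stabilization contributions $\pm\frac12|n^{(i)}|[v]$ identified in Remark \ref{rem1}(e) cancel (equivalently, by Remark \ref{QidentityRemark} one has $\cQ_i=\frac12(\cQ_i^-+\cQ_i^+)=\{v\}$ on interior edges), leaving
\begin{align*}
\bigl(\p_{h,x_i} v, \varphi_h\bigr)_{\cT_h}
= \Bl \{v\}\,n^{(i)}, [\varphi_h]\Br_{\mce}
- \bigl(v, \p_{x_i}\varphi_h\bigr)_{\cT_h}.
\end{align*}

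Next I would integrate by parts element-by-element in the volume term. On each $K\in\cT_h$ one has $-(v,\p_{x_i}\varphi_h)_K = (\p_{x_i}v,\varphi_h)_K - \bl v\varphi_h, n_K^{(i)}\br_{\p K}$, which is legitimate since $v\in W^{1,1}(K)$ while $\varphi_h$ is a polynomial on $K$. Summing over $K$ and converting the element-boundary sum into an edge sum (using $n_K=-n_{K'}=n_e$ on a shared interior edge), the interior edges contribute $\bl [v\varphi_h]\,n_e^{(i)}\br_{\cE^I}$ and the boundary edges contribute $\bl v\varphi_h\,n^{(i)}\br_{\cE^B}$. Applying the product-jump identity $[v\varphi_h] = \{v\}[\varphi_h] + [v]\{\varphi_h\}$ on interior edges then splits the interior contribution into $\bl \{v\}\,n_e^{(i)}, [\varphi_h]\br_{\cE^I}$ and $\bl [v]\,n_e^{(i)}, \{\varphi_h\}\br_{\cE^I}$.

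The key cancellation follows once I split $\bl \{v\}\,n^{(i)}, [\varphi_h]\br_{\mce}$ into its interior and boundary parts, using $\{v\}=v$ and $[\varphi_h]=\varphi_h$ on $\cE^B$. The interior $\{v\}[\varphi_h]$ terms cancel against the matching term produced by integration by parts, and the two boundary traces cancel as well, so that
\begin{align*}
\bigl(\p_{h,x_i} v, \varphi_h\bigr)_{\cT_h}
= \bigl(\p_{x_i} v, \varphi_h\bigr)_{\cT_h}
- \sum_{e\in\cE^I} n_e^{(i)} \int_e [v]\{\varphi_h\}\, ds.
\end{align*}
Finally, I would pair the representation formula \eqref{e4.1b} of Lemma \ref{lem4.0c} against $\varphi_h$: the term $\sum_K \p_{x_i} v\,\chi_K$ pairs as an $L^2$ function to give $(\p_{x_i}v,\varphi_h)_{\cT_h}$, while each $-n_e^{(i)}\delta(e,[v],\cdot)$ pairs through the extended definition \eqref{e4.0b} to give $-n_e^{(i)}\int_e [v]\{\varphi_h\}\,ds$. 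This reproduces exactly the right-hand side above, establishing \eqref{e4.1e} and hence $\p_{h,x_i} v = \cP_r^h \mathcal{D}_{x_i} v$.

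The main obstacle is organizational rather than conceptual: keeping the signs straight in the element-wise integration by parts and carefully separating interior-edge from boundary-edge contributions so that the right terms cancel. A secondary point demanding care is that $\bl \mathcal{D}_{x_i} v, \varphi_h\br$ must be read through \eqref{e4.0b}, since $\varphi_h$ is only piecewise smooth and the naive distributional action on a discontinuous test function would be ambiguous; it is precisely the emergence of the \emph{average} $\{\varphi_h\}$ (rather than a one-sided trace) that singles out the \emph{central} derivative, and not a one-sided one, as the operator coinciding with $\cP_r^h \mathcal{D}_{x_i} v$.
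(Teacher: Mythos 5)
Your proposal is correct and follows essentially the same route as the paper's own proof: both reduce $\bigl(\p_{h,x_i} v,\varphi_h\bigr)_{\cT_h}$ to $\bigl(\p_{x_i} v,\varphi_h\bigr)_{\cT_h} - \bigl\langle [v]\,n^{(i)},\{\varphi_h\}\bigr\rangle_{\mce^I}$ and then identify this with $\bigl\langle \mathcal{D}_{x_i} v,\varphi_h\bigr\rangle$ via Lemma \ref{lem4.0c} and the extended pairing \eqref{e4.0b}. The only difference is expository: where the paper invokes the piecewise integration by parts formula in one step, you derive it explicitly through element-wise integration by parts and the product-jump identity $[v\varphi_h]=\{v\}[\varphi_h]+[v]\{\varphi_h\}$.
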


\begin{proof}
By the definition of $\p_{h,x_i} v$ and using the 
integration by parts formula for piecewise functions, 
we get, for $i=1,2,\ldots, d$, 
\begin{align*} 
\bigl(\partial_{h,x_i} v,\varphi_h\bigr)_{\cT_h}
&= \bigl\langle \{v\} n^{(i)}, [\varphi_h] \bigr\rangle_{\mce}
-\bigl(v, \partial_{x_i}\varphi_h \bigr)_{\cT_h} \\
&= -\bigl\langle [v] n^{(i)}, \{\varphi_h\} \bigr\rangle_{\mce^I} 
  + \bigl(\partial_{x_i} v, \varphi_h \bigr)_{\cT_h} \nonumber\\
&=- \Bigl\langle \sum_{e\in \cE^I} n^{(i)} \delta(e, [v], \cdot), 
   \varphi_h \Bigr\rangle + \Bigl\langle \sum_{K\in\cT_h} \p_{x_i} v\chi_K,
    \varphi_h \Bigr\rangle \nonumber \\
&= \bigl\langle \mathcal{D}_{x_i} v, \varphi_h \bigr\rangle
\qquad\forall \varphi_h\in V^h_r.  \nonumber
\end{align*}
Here we have used \eqref{e4.0b} and \eqref{e4.1b} to get the 
final two equalities. The proof is complete.
\end{proof} 

\begin{remark} 
Define the ``lifting operator" $\mathcal{L}_{h,i}: L^1(\cE)\to V^h_r$ by 
\begin{equation}\label{lift}
\bigl(\mathcal{L}_{h,i} v,\varphi_h\bigr)_{\cT_h}
:=\Bigl\langle \sum_{e\in \cE^I} n^{(i)} \delta(e, [v], \cdot), 
   \varphi_h \Bigr\rangle
=\bigl\langle [v] n^{(i)}, \{\varphi_h\} \bigr\rangle_{\mce^I}.
\end{equation}
Then we have
\begin{equation}\label{lift_1}
\partial_{h,x_i} v = \sum_{K\in\cT_h} \p_{x_i} v\chi_K - \mathcal{L}_{h,i} v.  
\end{equation}

\end{remark}

Finally, combining Propositions \ref{prop0a} and \ref{prop0b}
and the well-known limiting characterization theorem of distributional 
derivatives (cf. \cite[Theorem 6.32]{Rudin91}), we obtain another
characterization for our DG FE derivatives.

\begin{proposition}\label{prop0c}
Set $\gamma^\pm =0$ in \eqref{e2.5}. Then for any $v\in \cV_h$,
there exists a sequence of functions $\{v_j\}_{j\geq 1} \subset 
C^\infty_0(\Omega)$ such that, for $i=1,2,\cdots, d$,
\begin{enumerate}
\item[{\rm (i)}] $v_j \to  v$ as $j\to \infty$ in $L^1(\Omega)$.
\item[{\rm (ii)}] $\p_{h,x_i} v_j\to \mathcal{P}^h_r 
\mathcal{D}_{x_i} v$
as $j\to \infty$ weakly in $V^h_r$ in the sense that 
\begin{equation}\label{e4.1g}
\lim_{j\to \infty} \bigl( \p_{h,x_i} v_j, \varphi_h \bigr)_{\cT_h} 
=\bigl(\mathcal{P}^h_r \mathcal{D}_{x_i} v, \varphi_h \bigr)_{\cT_h} 
\qquad\forall \varphi_h\in V^h_r.
\end{equation}
\end{enumerate}
 
\end{proposition}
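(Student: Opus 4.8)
The plan is to produce the approximating sequence by mollification and to reduce part~(ii) to a single limit identity that can be read off from the two preceding propositions. For part~(i) I take $v_j$ to be mollifications of $v$ (suitably extended across $\p\Ome$) restricted to $\Ome$; the standard properties of mollifiers give $v_j\to v$ in $L^1(\Ome)$ with each $v_j$ smooth, so $v_j\in\cV_h\cap H^1(\Ome)$. Proposition~\ref{prop0a} then applies to every $v_j$, giving $\p_{h,x_i}v_j=\cP^h_r\p_{x_i}v_j$ and hence, for all $\varphi_h\in V^h_r$,
\[
\bigl(\p_{h,x_i} v_j,\varphi_h\bigr)_{\cT_h}=\bigl(\p_{x_i} v_j,\varphi_h\bigr)_{\cT_h}.
\]
On the other side, Proposition~\ref{prop0b} identifies the limit as $\cP^h_r\mathcal{D}_{x_i}v$ through $\bigl(\cP^h_r\mathcal{D}_{x_i}v,\varphi_h\bigr)_{\cT_h}=\langle\mathcal{D}_{x_i}v,\varphi_h\rangle$, the right-hand side being understood via \eqref{e4.0b}. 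Because $V^h_r$ is finite dimensional, weak and strong convergence coincide, so it suffices to prove, for each fixed $\varphi_h\in V^h_r$, that $\bigl(\p_{x_i}v_j,\varphi_h\bigr)_{\cT_h}\to\langle\mathcal{D}_{x_i}v,\varphi_h\rangle$ as $j\to\infty$.

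To establish this limit I would integrate by parts element-wise, moving $\p_{x_i}$ off $v_j$ onto $\varphi_h$. Since each $v_j$ is continuous across interior faces, the inter-element contributions collapse and yield
\[
\bigl(\p_{x_i} v_j,\varphi_h\bigr)_{\cT_h}=-\bigl(v_j,\p_{x_i}\varphi_h\bigr)_{\cT_h}+\bigl\langle \{v_j\}\,n^{(i)},[\varphi_h]\bigr\rangle_{\mce}.
\]
The volume term converges to $-\bigl(v,\p_{x_i}\varphi_h\bigr)_{\cT_h}$ because $v_j\to v$ in $L^1(\Ome)$ while $\p_{x_i}\varphi_h\in L^\infty(\Ome)$. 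The crux is the face term: I must show that the traces of the mollifications satisfy $\{v_j\}\to\{v\}$ on $\mce$ in $L^1$, so that $\bigl\langle\{v_j\}n^{(i)},[\varphi_h]\bigr\rangle_{\mce}\to\bigl\langle\{v\}n^{(i)},[\varphi_h]\bigr\rangle_{\mce}$. This is precisely where a symmetric mollifier is needed: at a point of an interior face $e=\p K\cap\p K'$ the mollified value tends to $\tfrac12(v|_K+v|_{K'})=\{v\}$, exactly the average built into the extended pairing \eqref{e4.0b}. Granting the trace convergence, the right-hand side tends to $-\bigl(v,\p_{x_i}\varphi_h\bigr)_{\cT_h}+\bigl\langle\{v\}n^{(i)},[\varphi_h]\bigr\rangle_{\mce}$, which by the first line of the proof of Proposition~\ref{prop0b} equals $\langle\mathcal{D}_{x_i}v,\varphi_h\rangle$; this closes part~(ii). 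The classical input here is the continuity of distributional differentiation under mollification (\cite[Theorem 6.32]{Rudin91}), which guarantees that the emerging jump-concentrations of $\p_{x_i}v_j$ on interior faces carry exactly the weight $-n_e^{(i)}[v]$ predicted by the representation \eqref{e4.1b}.

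The main obstacle is the face-trace convergence $\{v_j\}\to\{v\}$, and in particular its behaviour on the boundary faces $\mce^B$. Mere $L^1(\Ome)$ convergence gives no control on traces, so the sequence cannot be arbitrary and must be the concrete symmetric mollification just described; verifying $\{v_j\}\to\{v\}$ then reduces to a local one-dimensional computation transverse to each face. The boundary requires real care: a mollification of the zero-extension of $v$ sends the boundary trace of $v_j$ to $\tfrac12 v|_{\p\Ome}$ rather than to the full trace $v|_{\p\Ome}$ that the target demands on $\mce^B$, while forcing $v_j$ to vanish on $\p\Ome$ discards that trace altogether. I would therefore build $v_j$ from a continuous $W^{1,1}$ extension of $v$ across $\p\Ome$ before mollifying, so that $v_j|_{\p\Ome}\to v|_{\p\Ome}$ and the $\mce^B$ contribution matches; with this choice both the interior- and boundary-face limits pass correctly and only the routine volume estimate remains. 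Throughout, the delicate step is the passage to the limit in the face integrals — not in the volume integral — so I would isolate and prove the trace convergence first, and invoke the abstract density/differentiation result only to package it.
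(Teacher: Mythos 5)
Your proposal takes the same basic route as the paper's own proof---symmetric mollification combined with Propositions \ref{prop0a} and \ref{prop0b}---but where the paper's entire argument is ``set $v_j:=v\ast\rho_j\in C_0^\infty(\Ome)$; then it is easy to check (i) and (ii),'' you actually perform the check, and the boundary difficulty you isolate is not a technicality: it is a genuine defect in the paper. The proposition as literally stated is false whenever $v$ has a nonzero trace on $\p\Ome$. Concretely, take $\varphi_h\equiv 1\in V^h_r$ and $v(x)=x_i$. For every $w\in C_0^\infty(\Ome)$, Proposition \ref{prop0a} gives
\[
\bigl(\p_{h,x_i}w,1\bigr)_{\cT_h}=\int_\Ome \p_{x_i}w\,dx=0 ,
\]
while by Proposition \ref{prop0b} and \eqref{e2.5} (with $\gamma^\pm=0$; jumps of the constant test function vanish on $\mce^I$)
\[
\bigl(\cP^h_r\mathcal{D}_{x_i}v,1\bigr)_{\cT_h}=\bigl(\p_{h,x_i}v,1\bigr)_{\cT_h}
=\bl v\, n^{(i)},1\br_{\mce^B}=\int_{\p\Ome}v\,n^{(i)}\,ds=|\Ome|\neq 0 .
\]
So no sequence in $C_0^\infty(\Ome)$ can satisfy (ii): the boundary flux is irretrievably lost, exactly as you argue. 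Your further observation that mollifying the zero extension yields the halved trace $\tfrac12 v|_{\p\Ome}$ shows that the paper's specific sequence $v\ast\rho_j$ (which in any case lies in $C_0^\infty(\mathbf{R}^d)$, not in $C_0^\infty(\Ome)$) also fails (ii). Your extension-then-mollify construction therefore proves the corrected proposition, with $C_0^\infty(\Ome)$ replaced by $C^\infty(\oOme)$; this corrected form still serves the only later use of the result, namely the limit argument in the proof of Theorem \ref{product_rule2}, since such $v_j$ belong to $\cV_h\cap C^0(\Ome)\cap H^1(\Ome)$.

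Two steps in your sketch remain to be filled in, and both are routine rather than conceptual. First, the interior-face convergence $\{v_j\}\to\{v\}$ in $L^1(e)$: since $v\in C^0(\cT_h)$ is bounded on each closed element, the mollified sequence is uniformly bounded, so dominated convergence reduces this to a.e.\ pointwise convergence on each face, which is exactly your transverse one-dimensional computation (valid at interior points of faces because the faces of a simplicial mesh are flat). Second, the trace-matching extension across $\p\Ome$: elementwise extension of $v|_K\in W^{1,1}(K)\cap C^0(\overline K)$ from each boundary simplex, patched by a partition of unity, matches the exterior trace to $v$ away from the $(d-2)$-dimensional interfaces of $\p\Ome$; the mismatch set has arbitrarily small measure, and a diagonalization over shrinking partition supports (or, more simply, inward translation of $v$ before mollifying near $\p\Ome$) finishes the argument. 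With these details supplied, your proof is complete and is strictly more careful than the paper's.
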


\begin{proof}
Let $\{\rho_j\}_{j\geq 1}$ denote a symmetric mollifier (or 
approximate identity) with compact support. For any 
$v\in \cV_h\subset L^1(\Omega)$, set 
$v_j:=v\ast \rho_j\in C_0^\infty(\Omega)$, the convolution of $v$ 
and $\rho_j$.  Then it is easy to check that the sequence $\{v_j\}$ 
fulfills the properties (i) and (ii). The proof is complete. 
\end{proof}

\subsection{Approximation properties}\label{sec-4.1}
By Proposition \ref{prop0a} we readily have the following 
approximation properties for the DG FE derivatives.

\begin{theorem}\label{lem4.1}
For any $v\in \cV_h\cap H^1(\Ome)$ there holds the following inequalities:
\begin{alignat}{2}\label{e4.2}
\|\partial_{x_i} v-\partial_{h,x_i}^* v\|_{L^2(\cT_h)}
&\leq \|\partial_{x_i} v-\partial_{x_i} \psi_h\|_{L^2(\cT_h)}
\qquad&&\forall \psi_h\in V^h_{r+1},\\
\|\partial_{x_i} v-\partial_{h,x_i}^* v\|_{L^2(K)}
&\leq \|\partial_{x_i} v-\partial_{x_i} \psi_h\|_{L^2(K)}
\qquad&&\forall \psi_h\in \pol_{r+1}, \label{e4.2a} 
\end{alignat}
where $*$ can take $+,-$ and empty value.
\end{theorem}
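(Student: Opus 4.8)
The plan is to reduce the error estimate to a standard best-approximation (quasi-optimality) argument built on Proposition~\ref{prop0a}. For $v\in\cV_h\cap H^1(\Ome)$, that proposition identifies all three discrete derivatives $\p_{h,x_i}^* v$ (with $*$ being $+$, $-$, or empty) with the single object $\cP_r^h\p_{x_i}v$, the $L^2$-projection of the true weak derivative $\p_{x_i}v$ onto $V_r^h$. Thus the three claimed inequalities are in fact \emph{one} inequality, and the whole theorem amounts to the classical statement that the $L^2$-projection is the best $L^2$-approximation from its range. So first I would invoke Proposition~\ref{prop0a} to replace $\p_{h,x_i}^* v$ by $\cP_r^h\p_{x_i}v$ throughout, collapsing the three cases into one.

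Next I would exploit the defining orthogonality of the projection: $\p_{x_i}v-\cP_r^h\p_{x_i}v$ is $L^2(\cT_h)$-orthogonal to every element of $V_r^h$. The key observation is that for any $\psi_h\in V_{r+1}^h$, its piecewise derivative $\p_{x_i}\psi_h$ lies in $V_r^h$ (lowering polynomial degree by one on each element $K$). Hence $\cP_r^h\p_{x_i}\psi_h=\p_{x_i}\psi_h$, and I can write
\begin{align*}
\p_{x_i}v-\cP_r^h\p_{x_i}v
=\bigl(\p_{x_i}v-\p_{x_i}\psi_h\bigr)-\cP_r^h\bigl(\p_{x_i}v-\p_{x_i}\psi_h\bigr).
\end{align*}
This displays the projection error as the residual of projecting the quantity $\p_{x_i}v-\p_{x_i}\psi_h$, which is precisely the setting in which the projection is a nonexpansive (contractive) map in $L^2$. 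Applying $\|(I-\cP_r^h)w\|_{L^2(\cT_h)}\le\|w\|_{L^2(\cT_h)}$ with $w=\p_{x_i}v-\p_{x_i}\psi_h$ yields \eqref{e4.2} directly.

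For the local estimate \eqref{e4.2a}, I would use that $\cP_r^h$ acts elementwise, so it restricts to the local $L^2(K)$-projection onto $\pol_r(K)$; the identical contraction argument then runs on a single element $K$, with $\psi_h\in\pol_{r+1}$ and $\p_{x_i}\psi_h\in\pol_r(K)$. I do not anticipate a genuine obstacle here: the content is entirely supplied by Proposition~\ref{prop0a}, and the remaining work is the textbook Pythagorean/best-approximation property of orthogonal projections. The only point requiring a small remark is the degree-reduction fact $\p_{x_i}(V_{r+1}^h)\subset V_r^h$, which ensures that the comparison space $V_{r+1}^h$ (rather than $V_r^h$) is the right one to state the bound against; this is what makes the estimate a genuine $\pol_{r+1}$ best-approximation result and is worth stating explicitly so the reader sees why one extra degree appears.
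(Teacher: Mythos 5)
Your proposal is correct and follows essentially the same route as the paper: both arguments rest on the identification of $\partial_{h,x_i}^* v$ with the $L^2$-projection $\cP_r^h \partial_{x_i} v$ (Proposition \ref{prop0a}, equivalently the orthogonality relation \eqref{e4.1}) together with the inclusion $\partial_{x_i} V^h_{r+1}\subset V^h_r$. The only cosmetic difference is that the paper carries out the orthogonality-plus-Cauchy--Schwarz computation inline, while you cite the equivalent non-expansiveness property $\|(I-\cP_r^h)w\|_{L^2}\le \|w\|_{L^2}$ of the orthogonal projection; these are the same argument.
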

\begin{proof}
Notice that \eqref{e4.1} can be rewritten as 
\[
\bigl(\partial_{x_i} v-\partial_{h,x_i}^* v,\varphi_h\bigr)_{\cT_h}
=0 \qquad\forall \varphi_h\in V^h_r.
\]
Hence, for any $\psi_h\in V^h_{r+1}$, there holds
\begin{align*}
\|\partial_{x_i} v-\partial_{h,x_i}^* v\|_{L^2(\cT_h)}^2
&=\bigl( \partial_{x_i} v-\partial_{h,x_i}^* v,\partial_{x_i} v-\partial_{h,x_i}^* v \bigr)_{\cT_h} \\
&=\bigl( \partial_{x_i} v-\partial_{h,x_i}^* v,\partial_{x_i} v \bigr)_{\cT_h} \\
&=\bigl( \partial_{x_i} v-\partial_{h,x_i}^* v,\partial_{x_i} v-\partial_{x_i} \psi_h \bigr)_{\cT_h} \\
&\leq  \|\partial_{x_i} v-\partial_{x_i}^* v\|_{L^2(\cT_h)}
\|\partial_{x_i} v-\partial_{x_i} \psi_h\|_{L^2(\cT_h)}.
\end{align*}
The identity \eqref{e4.2} follows from the above inequality because 
$\partial_{x_i} V^h_{r+1} \subset V^h_r$. The second inequality 
\eqref{e4.2a} is obtained by similar arguments.
\end{proof}

It is also possible to derive some error estimates for 
$\mathcal{D}_{x_i} v-\partial_{h,x_i}^* v$ with general $v\in \cV_h$ 
in some weaker (than $L^2$) norm. However, since the derivation is
lengthy and technical,  we leave it 
to the interested reader to exploit.

\subsection{Product rule and chain rule}\label{sec-4.3}
The product rule and the chain rule are two very basic properties of
classical derivatives and weak derivatives 
(cf. \cite[Chapter 7]{Gilbarg_Trudinger01}). The goal of this subsection 
is to establish both of the rules for our DG FE derivatives. 
As expected, these discrete rules take different forms from
their continuous counterparts.

First, we consider the case when functions are from the space
$\cV_h\cap C^0(\Omega)$. In this case, we have the following 
product rule and chain rule.

\begin{theorem}\label{product_rule1}
Let $F\in C^1(\mathbf{R}), F'\in L^\infty(\mathbf{R})$. 
For $u,v\in \cV_h\cap C^0(\Omega)$, there holds, for $i=1,2,\cdots, d$,
\begin{align}\label{e4.12a}
\p_{h,x_i} (uv) &= \mathcal{P}^h_r \bigl(u\p_{x_i} v
+ v \p_{x_i} u\bigr),\\
\p_{h,x_i} F(u) &= \mathcal{P}^h_r \bigl( F'(u) \p_{x_i} u \bigr).
\label{e4.12b}
\end{align}
\end{theorem}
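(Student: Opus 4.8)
The plan is to reduce both identities to a single characterization: for any $w\in\cV_h\cap C^0(\Omega)$ the discrete central derivative $\p_{h,x_i} w$ coincides with the $L^2$-projection $\cP^h_r\,\p_{x_i}w$ of the piecewise (weak) derivative, exactly as in Proposition \ref{prop0a}, even though $w$ need not belong to $H^1(\Omega)$. Granting this, \eqref{e4.12a} and \eqref{e4.12b} follow by first checking that $uv$ and $F(u)$ again lie in $\cV_h\cap C^0(\Omega)$, then invoking the classical product and chain rules for bounded weakly differentiable functions from \cite[Chapter 7]{Gilbarg_Trudinger01}, namely $\p_{x_i}(uv)=u\,\p_{x_i}v+v\,\p_{x_i}u$ and $\p_{x_i}F(u)=F'(u)\,\p_{x_i}u$ elementwise, and finally applying $\cP^h_r$.

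First I would establish the characterization. Because $u,v$ are continuous across interior faces, every relevant $w$ satisfies $[w]=0$ on $\mce^I$ and its one-sided traces coincide with the genuine trace, so $\cQ_i^\pm(w)=w$ and $\{w\}=w$. Substituting into \eqref{e2.5}, the penalty terms weighted by $\gamma_i^\pm$ drop out regardless of the chosen constants, and both $\p_{h,x_i}^\pm w$ reduce to $\langle w\,n^{(i)},[\varphi_h]\rangle_{\mce}-(w,\p_{x_i}\varphi_h)_{\cT_h}$; in particular $\p^-_{h,x_i}w=\p^+_{h,x_i}w=\p_{h,x_i}w$. Integrating by parts elementwise, the interior boundary contributions from adjacent elements assemble (by continuity of $w$) into $\langle w\,n^{(i)},[\varphi_h]\rangle_{\mce}$, which cancels the face term and leaves $(\p_{h,x_i}w,\varphi_h)_{\cT_h}=(\p_{x_i}w,\varphi_h)_{\cT_h}$ for all $\varphi_h\in V^h_r$. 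This is precisely $\p_{h,x_i}w=\cP^h_r\,\p_{x_i}w$, and the computation is identical to that in Propositions \ref{prop0a} and \ref{prop0b}, the only new point being that continuity alone, rather than global $H^1$-regularity, suffices.

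With the characterization in hand I would verify the closure properties. Since $u,v$ are continuous on each compact $\overline K$ they are bounded there, while $\p_{x_i}u,\p_{x_i}v\in L^1(K)$; hence $u\,\p_{x_i}v+v\,\p_{x_i}u\in L^1(K)$, so $uv\in W^{1,1}(K)$, and $uv\in C^0(\Omega)$, giving $uv\in\cV_h\cap C^0(\Omega)$. Likewise $F\in C^1(\mathbf{R})$ with $F'\in L^\infty(\mathbf{R})$ and $u$ bounded yield $F(u)\in C^0(\Omega)$ and $F'(u)\,\p_{x_i}u\in L^1(K)$, so $F(u)\in\cV_h\cap C^0(\Omega)$. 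Applying the characterization to $w=uv$ together with the elementwise weak product rule produces \eqref{e4.12a}, and applying it to $w=F(u)$ together with the elementwise weak chain rule produces \eqref{e4.12b}.

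The main obstacle I anticipate is technical rather than conceptual: carrying the calculus out on the correct function class. The classical product and chain rules I rely on are stated for $W^{1,1}\cap L^\infty$ functions, so I must use the boundedness furnished by continuity on $\overline K$, and the chain rule needs the $C^1$ outer function $F$ with $F'\in L^\infty$ to ensure the Lipschitz-type composition result applies. A secondary, minor point is interpreting $\cP^h_r$ on an $L^1$ (rather than $L^2$) argument: because the right-hand sides are only paired against the bounded test functions $\varphi_h\in V^h_r$, the projection should be read as the unique element of $V^h_r$ representing the functional $\varphi_h\mapsto(u\,\p_{x_i}v+v\,\p_{x_i}u,\varphi_h)_{\cT_h}$ (respectively with $F'(u)\,\p_{x_i}u$), in the same well-posed sense discussed in Remarks \ref{rem1}(a).
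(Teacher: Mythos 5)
Your proposal is correct and follows essentially the same route as the paper: reduce both identities to the fact that for a globally continuous function in $\cV_h$ the discrete derivative $\p_{h,x_i}$ is the $L^2$-projection of the elementwise weak derivative, then use closure of $\cV_h\cap C^0(\Omega)$ under products and composition together with the classical product and chain rules of \cite[Chapter 7]{Gilbarg_Trudinger01}. You are in fact slightly more careful than the paper, which invokes Proposition \ref{prop0a} (stated for $\cV_h\cap H^1(\Omega)$) even though $uv$ and $F(u)$ are only known to lie in $\cV_h\cap C^0(\Omega)$; your re-derivation of the projection characterization under the weaker continuity hypothesis, and your remark on reading $\cP^h_r$ against merely $L^1$ arguments, close that small gap.
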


\begin{proof} 
Notice that $uv\in \cV_h\cap C^0(\Omega)$ for any $u,v\in 
\cV_h\cap C^0(\Omega)$.  By Proposition \ref{prop0a} and 
the product rule for weak derivatives 
(cf. \cite[Chapter 7]{Gilbarg_Trudinger01}) we get 
\begin{align*}
\bigl( \p_{h,x_i} (uv), \varphi_h \bigr)_{\cT_h} 
= \bigl( \p_{x_i} (uv), \varphi_h \bigr)_{\cT_h} 
= \bigl( u \p_{x_i} v +v \p_{x_i} u, \varphi_h \bigr)_{\cT_h}  
\qquad\forall \varphi_h\in V^h_r.
\end{align*}
Hence, \eqref{e4.12a} holds by the definition of $ \mathcal{P}^h_r$.  

Similarly, since $F(u)\in \cV_h\cap C^0(\Omega)$ 
for $u\in \cV_h\cap C^0(\Omega)$ we have 
\begin{align*}
\bigl( \p_{h,x_i} F(u), \varphi_h \bigr)_{\cT_h} 
= \bigl( \p_{x_i} F(u), \varphi_h \bigr)_{\cT_h} 
= \bigl( F'(u) \p_{x_i} u, \varphi_h \bigr)_{\cT_h}
\qquad\forall \varphi_h\in V^h_r,
\end{align*}
which then infers \eqref{e4.12a}. The proof is complete.
\end{proof}

An immediate consequence of Theorem \ref{product_rule1}
and Corollary \ref{cor4.1} is the following corollary.

\begin{corollary}
Let $F\in C^1(\mathbf{R}), F'\in L^\infty(\mathbf{R})$.
For $u,v\in V^h_\ell\cap C^0(\Omega)$
with $0\leq \ell-1\leq r$, there holds, for $i=1,2,\cdots, d$,
\begin{align}\label{e4.12c}
\p_{h,x_i} (uv) &= \mathcal{P}^h_r \bigl(u\p_{h,x_i} v
+ v \p_{h,x_i} u\bigr),\\
\p_{h,x_i} F(u) &= \mathcal{P}^h_r \bigl( F'(u) \p_{h,x_i} u \bigr).
\label{e4.12d}
\end{align}
\end{corollary}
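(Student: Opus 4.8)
The plan is to obtain both identities as a direct consequence of Theorem \ref{product_rule1}, using Corollary \ref{cor4.1} to exchange the weak derivatives appearing there for the discrete derivatives on the right-hand sides of \eqref{e4.12c} and \eqref{e4.12d}. The whole argument is a substitution, legitimate precisely because of the stated degree restriction $0\le \ell-1\le r$, i.e. $1\le \ell\le r+1$.

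First I would note that a globally continuous piecewise polynomial lies in $H^1(\Omega)$, so the hypothesis $u,v\in V^h_\ell\cap C^0(\Omega)$ places $u$ and $v$ in $V^h_\ell\cap H^1(\Omega)$ with $0\le \ell\le r+1$. Corollary \ref{cor4.1} then applies and furnishes the a.e. identities $\p_{h,x_i}u=\p_{x_i}u$ and $\p_{h,x_i}v=\p_{x_i}v$, both belonging to $V^h_{\ell-1}\subset V^h_r$. Next I would invoke Theorem \ref{product_rule1}, which is available since $V^h_\ell\cap C^0(\Omega)\subset \cV_h\cap C^0(\Omega)$, to get $\p_{h,x_i}(uv)=\mathcal{P}^h_r\bigl(u\p_{x_i}v+v\p_{x_i}u\bigr)$ and $\p_{h,x_i}F(u)=\mathcal{P}^h_r\bigl(F'(u)\p_{x_i}u\bigr)$. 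Substituting the discrete derivatives into the arguments of the $L^2$ projection then yields the claimed formulas: the integrands $u\p_{x_i}v+v\p_{x_i}u$ and $u\p_{h,x_i}v+v\p_{h,x_i}u$ coincide as $L^2(\Omega)$ functions, so their projections onto $V^h_r$ agree, and likewise $F'(u)\p_{x_i}u=F'(u)\p_{h,x_i}u$.

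I do not anticipate a genuine analytic obstacle here; the only point requiring care is matching the degree inequalities across the two cited results. Theorem \ref{product_rule1} carries no degree constraint, holding on all of $\cV_h\cap C^0(\Omega)$, whereas Corollary \ref{cor4.1} needs $\ell\le r+1$, which is exactly the hypothesis $\ell-1\le r$; the lower bound $0\le \ell-1$ ensures $\p_{x_i}u,\p_{x_i}v$ are well defined elements of $V^h_{\ell-1}$. The substitution step itself is automatic, since the $L^2$ projection depends only on the function being projected and two a.e.-equal functions share the same projection.
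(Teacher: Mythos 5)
Your proof is correct and follows exactly the route the paper intends: the paper states this corollary as "an immediate consequence of Theorem \ref{product_rule1} and Corollary \ref{cor4.1}," which is precisely your argument of using $V^h_\ell\cap C^0(\Omega)\subset V^h_\ell\cap H^1(\Omega)$ to invoke Corollary \ref{cor4.1}, replacing $\p_{x_i}u,\p_{x_i}v$ by $\p_{h,x_i}u,\p_{h,x_i}v$ inside the projections from Theorem \ref{product_rule1}. Your attention to the degree bookkeeping ($1\le\ell\le r+1$ matching the hypothesis $0\le\ell\le r+1$ of Corollary \ref{cor4.1}) is exactly the only point that needed checking.
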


Next, we consider the general case when functions are from the space $\cV_h$.
As expected, the discrete product and chain rules appear in more 
complicated forms.  

\begin{theorem}\label{product_rule2}
Let $F\in C^1(\mathbf{R})$ and $F'\in L^\infty(\mathbf{R})$. 
For $u,v\in \cV_h$, there holds for $i=1,2,\cdots, d$,
\begin{align}\label{e4.12e}
\p_{h,x_i} (uv) &= \mathcal{P}^h_r \bigl(\,\overline{u}\,\mathcal{D}_{x_i} v
+ \overline{v}\, \mathcal{D}_{x_i} u\bigr),\\
\p_{h,x_i} F(u) &= \mathcal{P}^h_r \bigl(\, \overline{F'(u)}\,\mathcal{D}_{x_i} u \bigr),
\label{e4.12f}
\end{align}
where $\mathcal{D}_{x_i} v$ is given by \eqref{e4.1b} and $\overline{v}$ 
represents a modification of $v$ which assumes the same value in each 
element $K\in \cT_h$ and takes the average value of $v$ on each interior 
edge $e\in \cE^I$. Thus,
\begin{align}\label{e4.12f0}
\overline{u}\,\mathcal{D}_{x_i} v= \sum _{K\in \cT_h} u\,\p_{x_i} v\, \chi_K 
-\sum_{e\in \cE^I} n_e^{(i)} \delta(e, \{u\}[v], x) 
\qquad \mbox{for a.e. } x\in \Omega.
\end{align}

\end{theorem}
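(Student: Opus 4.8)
The plan is to mimic the proof of Theorem \ref{product_rule1}, but since $u,v\in\cV_h$ need not be globally continuous, the clean identity $\p_{h,x_i}(uv)=\cP_r^h\,\p_{x_i}(uv)$ no longer holds; instead I must route everything through Proposition \ref{prop0b}, which characterizes $\p_{h,x_i}$ as the $L^2$-projection of the \emph{distributional} derivative $\mathcal{D}_{x_i}$. So the first step is to apply Proposition \ref{prop0b} to the product $uv$ (respectively to $F(u)$), noting that $uv\in\cV_h$ and $F(u)\in\cV_h$ since $F\in C^1$ with bounded derivative. This gives
\begin{align*}
\bigl(\p_{h,x_i}(uv),\varphi_h\bigr)_{\cT_h}=\bigl\langle \mathcal{D}_{x_i}(uv),\varphi_h\bigr\rangle
\qquad\forall\varphi_h\in V^h_r,
\end{align*}
with the right-hand side interpreted via \eqref{e4.0b}. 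The whole problem then reduces to computing the distributional derivative $\mathcal{D}_{x_i}(uv)$ explicitly and identifying it with $\overline{u}\,\mathcal{D}_{x_i}v+\overline{v}\,\mathcal{D}_{x_i}u$.

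The main step is therefore a distributional computation. I would apply the representation formula \eqref{e4.1b} of Lemma \ref{lem4.0c} directly to the piecewise-$W^{1,1}$ function $uv$. On each element $K$ the ordinary product rule $\p_{x_i}(uv)=u\,\p_{x_i}v+v\,\p_{x_i}u$ gives the volume terms, and on each interior edge the jump of the product appears, weighted by $-n_e^{(i)}\delta(e,[uv],x)$. The key algebraic fact is the jump identity $[uv]=\{u\}[v]+\{v\}[u]$, which splits the edge contribution symmetrically. Comparing with \eqref{e4.12f0}, the volume part $u\,\p_{x_i}v\,\chi_K$ together with the edge part $-n_e^{(i)}\delta(e,\{u\}[v],x)$ is exactly $\overline{u}\,\mathcal{D}_{x_i}v$, and the remaining terms assemble into $\overline{v}\,\mathcal{D}_{x_i}u$. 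This establishes \eqref{e4.12e}; the chain rule \eqref{e4.12f} is entirely analogous, using $\p_{x_i}F(u)=F'(u)\p_{x_i}u$ elementwise and the jump $[F(u)]$, which one rewrites using $\overline{F'(u)}$ so that the edge weight becomes $\{F'(u)\}[u]$ in the delta-function strength.

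I anticipate the main obstacle to be the jump term in the chain rule rather than the product rule. For the product, $[uv]=\{u\}[v]+\{v\}[u]$ is an exact identity, so the matching is clean. For $F(u)$, however, $[F(u)]=F(u|_K)-F(u|_{K'})$ is \emph{not} in general equal to $\{F'(u)\}[u]$; the two agree only up to the mean-value structure of $F$, so the statement \eqref{e4.12f} must be read with the understanding that $\overline{F'(u)}$ denotes precisely the edge modification whose value on $e$ is chosen to reproduce the jump $[F(u)]=\overline{F'(u)}\,[u]$ (when $[u]\neq 0$), i.e. a discrete mean-value coefficient. The careful point is to verify that this coefficient is well-defined, bounded (using $F'\in L^\infty$), and consistent with the volume term $F'(u)\p_{x_i}u$ so that the combined object is indeed $\overline{F'(u)}\,\mathcal{D}_{x_i}u$ in the sense of the delta-function pairing \eqref{e4.0b}. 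Once the edge strengths are correctly identified, testing against $\varphi_h\in V^h_r$ via \eqref{e4.0b} and invoking the definition of $\cP_r^h$ closes the argument.
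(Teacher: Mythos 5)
Your proposal is correct for the product rule \eqref{e4.12e}, and it takes a genuinely different route from the paper. The paper proves Theorem \ref{product_rule2} by regularization: it sets $u_j=u\ast\rho_j$, $v_j=v\ast\rho_j$, applies Theorem \ref{product_rule1} to the smooth pair $(u_j,v_j)$, and passes to the limit via Propositions \ref{prop0b} and \ref{prop0c} (the chain of equalities \eqref{e4.12i}--\eqref{e4.12l}). You instead argue directly at the distributional level: Proposition \ref{prop0b} applied to $uv\in\cV_h$, the representation formula \eqref{e4.1b} of Lemma \ref{lem4.0c}, the elementwise product rule, and the exact jump identity $[uv]=\{u\}[v]+\{v\}[u]$, which matches \eqref{e4.12f0} term by term. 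Your route is not only valid but tighter than the paper's: the paper's intermediate limit claims are actually false as stated, because on an interior edge $u_jv_j\to\{u\}\{v\}=\{uv\}-\frac14[u][v]$ rather than $\{uv\}$, so the evaluations in \eqref{e4.12i} and \eqref{e4.12k} each miss a term $\frac14\bl [u][v]\,n^{(i)},[\varphi_h]\br_{\mce^I}$ (moreover, Proposition \ref{prop0c} is invoked for the sequence $u_jv_j$, which is not the mollification of $uv$). The paper's conclusion survives only because the same spurious term appears in both limit evaluations and cancels when they are equated; your computation sidesteps this entirely.

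Your reservation about the chain rule is also well founded, and it exposes a defect in the paper's statement rather than a gap in your argument. By Proposition \ref{prop0b} and Lemma \ref{lem4.0c}, the edge strength of $\mathcal{D}_{x_i}F(u)$ is $[F(u)]$, whereas $\overline{F'(u)}\,\mathcal{D}_{x_i}u$, read with the paper's own definition of the bar (edge value $\{F'(u)\}$), carries the strength $\{F'(u)\}[u]$. These differ in general: for piecewise constants $u|_K=a$, $u|_{K'}=b$, equality would require $F(a)-F(b)=\frac12\bigl(F'(a)+F'(b)\bigr)(a-b)$, which already fails for $F=\sin$, $a=0$, $b=\pi/2$; testing against $\varphi_h$ supported on a single element then shows the two sides of \eqref{e4.12f} disagree. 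Hence \eqref{e4.12f} holds only under your corrected reading, in which $\overline{F'(u)}$ on $e$ is the mean-value (secant) coefficient $\int_0^1 F'\bigl(u|_{K'}+t[u]\bigr)\,dt$ --- bounded by $\|F'\|_{L^\infty}$ and satisfying $\overline{F'(u)}\,[u]=[F(u)]$ --- i.e., the averaged derivative appearing in the Vol'pert/Ambrosio--Dal Maso chain rule \cite{Ambrosio_DalMaso90}, which the paper itself alludes to in the remark following the theorem. The paper's limit passage \eqref{e4.12j} cannot rescue the literal statement, since $F(u_j)\neq F(u)\ast\rho_j$, so Proposition \ref{prop0c} does not apply to that sequence. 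In short: prove \eqref{e4.12e} exactly as you propose, and state \eqref{e4.12f} with the mean-value interpretation of $\overline{F'(u)}$.
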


\begin{proof}
Without loss of the generality, we assume $F\in C^\infty (\mathbf{R})$.
For $u,v\in \cV_h$, let $\{v_j\}, \{u_j\}\subset C^\infty_0(\Omega)$ 
be defined in Proposition \ref{prop0c}. Then, by Theorem \ref{product_rule1},
we have
\begin{align}\label{e4.12g}
\p_{h,x_i} (u_j v_j) &= \mathcal{P}^h_r \bigl(u_j\p_{x_i} v_j
+ v_j \p_{x_i} u_j \bigr),\\
\p_{h,x_i} F(u_j) &= \mathcal{P}^h_r \bigl( F'(u_j) \p_{x_i} u_j \bigr). 
\label{e4.12h} 
\end{align}

It follows from \eqref{e4.1g} and \eqref{e4.1e} that for any $\varphi_h\in 
V^h_r$,
\begin{align}\label{e4.12i}
\lim_{j\to\infty} \bigl(\p_{h,x_i} (u_j v_j), \varphi_h\bigr)_{\cT_h}
&=\bigl( \mathcal{P}^h_r \mathcal{D}_{x_i}(uv), \varphi_h\bigr)_{\cT_h}\\
&=\bigl\langle \mathcal{D}_{x_i}(uv), \varphi_h \bigr\rangle 
=\bigl(\p_{h,x_i} (u v), \varphi_h\bigr)_{\cT_h}, \nonumber \\
\lim_{j\to\infty} \bigl( \p_{h,x_i} F(u_j), \varphi_h\bigr)_{\cT_h}
&=\bigl( \mathcal{P}^h_r \mathcal{D}_{x_i} F(u), \varphi_h\bigr)_{\cT_h}
\label{e4.12j} \\
&=\bigl\langle \mathcal{D}_{x_i} F(u), \varphi_h \bigr\rangle
=\bigl(\p_{h,x_i} F(u), \varphi_h\bigr)_{\cT_h}. \nonumber 
\end{align}
On the other hand, we have 
\begin{align}\label{e4.12k}
\lim_{j\to\infty} \bigl( \mathcal{P}^h_r \bigl(u_j\p_{x_i} v_j 
+ v_j\p_{x_i} u_j\bigr), \varphi_h\bigr)_{\cT_h}
&= \lim_{j\to\infty} \bigl( u_j\p_{x_i} v_j + v_j\p_{x_i} u_j, 
\varphi_h\bigr)_{\cT_h} \\
&= \bigl\langle \mathcal{D}_{x_i} v, u\varphi_h  \bigr\rangle 
 + \bigl\langle \mathcal{D}_{x_i} u, v\varphi_h  \bigr\rangle \nonumber\\
&=\bigl\langle\, \overline{u}\,\mathcal{D}_{x_i} v
   +\overline{v}\, \mathcal{D}_{x_i} u, \varphi_h  \bigr\rangle, 
\nonumber \\
\lim_{j\to\infty} \bigl( \mathcal{P}^h_r \bigl( F'(u_j) \p_{x_i} u_j \bigr),
\varphi_h\bigr)_{\cT_h}
&=\lim_{j\to\infty} \bigl( F'(u_j) \p_{x_i} u_j, \varphi_h\bigr)_{\cT_h}
\label{e4.12l} \\
&=\bigl\langle \mathcal{D}_{x_i} u,F'(u) \varphi_h \bigr\rangle \nonumber\\
&=\bigl\langle\, \overline{F'(u)}\,\mathcal{D}_{x_i} u,\varphi_h \bigr\rangle. 
\nonumber 
\end{align}

Then, \eqref{e4.12e} follows from combining \eqref{e4.12g}, \eqref{e4.12i}
and \eqref{e4.12k}, and \eqref{e4.12f} follows from combining 
\eqref{e4.12h}, \eqref{e4.12j} and \eqref{e4.12l}. The proof is complete.
\end{proof}

\begin{remark}
The $C^1$ smoothness assumption on $F$ in Theorem {\rm \ref{product_rule2}}
may be weakened to $F\in C^{0,1}(\mathbf{R})$ by following the 
techniques used in \cite{Ambrosio_DalMaso90}. We leave the generalization 
to the interested reader to exploit.
\end{remark}

\subsection{Integration by parts formula and discrete divergence theorems}
\label{sec-4.4}

In this subsection, we derive integration by parts formulas for the 
discrete differential operators that resemble the standard integration by 
parts formula in the continuous setting. These results will play a 
crucial role in developing DG methods for a variety of PDE problems as well as 
in relating these methods to other existing DG methods in the literature.

\begin{theorem}
Suppose that $\gamma^+_i = - \gamma^-_i$, that is, $\gamma^+_{i,e} = -\gamma_{i,e}^-$ 
for all $e\in \mce^I$.  
Then the integration by parts formulas
\begin{align} \label{IBPFormula1}
\big( \p_{h,x_i}^\pm v_h, \varphi_h\big)_{\mct} 
&= -\big(v_h, \p_{h,x_i}^\mp \varphi_h\big)_{\mct}
+\bl v_h, \varphi_h n^{(i)}\br_{\mce^B}
\intertext{and}
\label{IBPFormula1b}
\big( \p_{h,x_i} v_h, \varphi_h\big)_{\mct}
& = -\big(v_h, \p_{h,x_i} \varphi_h \big)_{\mct}
+\bl v_h, \varphi_h n^{(i)}\br_{\mce^B}
\end{align}
hold for all $v_h,\varphi_h\in V_r^h$.  
\end{theorem}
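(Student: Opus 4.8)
The plan is to prove the one-sided formula \eqref{IBPFormula1} in the equivalent form
\[
\big(\p_{h,x_i}^\pm v_h,\varphi_h\big)_{\mct}+\big(v_h,\p_{h,x_i}^\mp\varphi_h\big)_{\mct}=\bl v_h,\varphi_h\,n^{(i)}\br_{\mce^B},
\]
and then obtain the central formula \eqref{IBPFormula1b} simply by averaging the $+$ and $-$ versions of \eqref{IBPFormula1} (using the definitions $\p_{h,x_i}v_h=\tfrac12(\p_{h,x_i}^-v_h+\p_{h,x_i}^+v_h)$ and likewise for $\varphi_h$). The starting point is the reformulation in Remarks \ref{rem1}(e), which rewrites each one-sided derivative purely in terms of averages, jumps, and a volume term. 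Applying it to the first summand gives
\[
\big(\p_{h,x_i}^\pm v_h,\varphi_h\big)_{\mct}=\Bl\{v_h\}n^{(i)},[\varphi_h]\Br_{\mce}+\Bl\big(\gamma^\pm_i\pm\tfrac12|n^{(i)}|\big)[v_h],[\varphi_h]\Br_{\mce^I}-\big(v_h,\p_{x_i}\varphi_h\big)_{\mct}.
\]
Since $(\cdot,\cdot)_{\mct}$ is symmetric, I apply the same identity with the roles of $v_h,\varphi_h$ interchanged and with $\pm$ replaced by $\mp$ to expand $\big(v_h,\p_{h,x_i}^\mp\varphi_h\big)_{\mct}=\big(\p_{h,x_i}^\mp\varphi_h,v_h\big)_{\mct}$ analogously.

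\textbf{The computation.} After adding the two expansions I would group the terms into three families: the volume terms, the $\{\cdot\}n^{(i)}$ edge terms over $\mce$, and the $\gamma$-weighted jump terms over $\mce^I$. For the volume terms I combine $-\big(v_h,\p_{x_i}\varphi_h\big)_{\mct}-\big(\varphi_h,\p_{x_i}v_h\big)_{\mct}=-\sum_{K}\int_K\p_{x_i}(v_h\varphi_h)\,dx$ using the elementwise product rule (legitimate since $v_h,\varphi_h$ are polynomials on each $K$), and then apply elementwise integration by parts to rewrite this as $-\sum_K\int_{\p K}v_h\varphi_h\,n_K^{(i)}\,ds$. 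Converting the element-boundary sum to an edge sum with $n_e=n_K|_e=-n_{K'}|_e$ produces $-\bl[v_h\varphi_h],n^{(i)}\br_{\mce^I}-\bl v_h\varphi_h,n^{(i)}\br_{\mce^B}$, and the product jump identity $[v_h\varphi_h]=\{v_h\}[\varphi_h]+[v_h]\{\varphi_h\}$ splits the interior contribution into two pieces that I expect to match the $\{\cdot\}n^{(i)}$ edge terms.

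\textbf{Cancellations and the main obstacle.} The hypothesis $\gamma^+_i=-\gamma^-_i$ enters decisively: the $\gamma$-family collapses to $\Bl(\gamma^\pm_i+\gamma^\mp_i)[v_h],[\varphi_h]\Br_{\mce^I}=0$, while the accompanying $\pm\tfrac12|n^{(i)}|$ and $\mp\tfrac12|n^{(i)}|$ weights cancel against each other independently. On the interior edges, the two surviving $\{\cdot\}n^{(i)}$ terms $\Bl\{v_h\}[\varphi_h],n^{(i)}\Br_{\mce^I}+\Bl\{\varphi_h\}[v_h],n^{(i)}\Br_{\mce^I}$ should exactly cancel the two pieces coming from the volume conversion, leaving no interior contribution. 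On the boundary edges, where $\{v_h\}=v_h$, $[\varphi_h]=\varphi_h$, the two $\{\cdot\}n^{(i)}$ boundary terms each equal $\bl v_h\varphi_h,n^{(i)}\br_{\mce^B}$ and one copy is removed by the $-\bl v_h\varphi_h,n^{(i)}\br_{\mce^B}$ from the volume conversion, leaving precisely $\bl v_h,\varphi_h n^{(i)}\br_{\mce^B}$. The main obstacle is purely bookkeeping: keeping consistent orientation for $n_e^{(i)}$ across interior faces and correctly handling the boundary-edge conventions ($[v]=v$, $\{v\}=v$) so that every interior term cancels and only the boundary term remains. Once the one-sided identity is established, \eqref{IBPFormula1b} follows immediately by averaging.
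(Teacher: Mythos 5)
Your proof is correct and is essentially the paper's own argument in symmetrized form: both rest on the reformulation of Remark \ref{rem1}(e) (i.e., identity \eqref{Qidentity}), one application of elementwise integration by parts, and the fact that under $\gamma_i^+=-\gamma_i^-$ the surviving interior jump term $\bl(\gamma_i^++\gamma_i^-)[v_h],[\varphi_h]\br_{\mce^I}$ vanishes while the $\pm\tfrac12|n^{(i)}|$ weights cancel independently. The only difference is organizational: you add the two one-sided expansions and eliminate the volume terms by integrating the product $v_h\varphi_h$ by parts elementwise (using $[v_h\varphi_h]=\{v_h\}[\varphi_h]+[v_h]\{\varphi_h\}$), whereas the paper substitutes sequentially through the intermediate quantity $(\p_{x_i}v_h,\varphi_h)_{\mct}$ --- the same cancellations carried out in a different order.
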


\begin{proof}
It suffices to show \eqref{IBPFormula1}, since \eqref{IBPFormula1b}
trivially follows from \eqref{IBPFormula1}.

By \eqref{e2.5} and integration by parts, we obtain
\begin{align*}
\big(\p_{h,x_i}^\pm v, \varphi_h\big)_{\mct}
= \big( \p_{x_i} v, \varphi_h\big)_{\mct} 
&+\Bl  \big(\cQ^\pm_i (v)  - \{v\}\big),[\varphi_h]n^{(i)}\Br_{\mce} \\
&-\bl  [v],\{\varphi_h\}n^{(i)}-\gamma^\pm_i [\varphi_h]\br_{\mce^I}.
\end{align*}
Using the identity \eqref{Qidentity}, we have
\begin{align}\label{IBPLine}
\big( \p_{h,x_i}^\pm v, \varphi_h\big)_{\mct}
& = \big(\p_{x_i} v, \varphi_h\big)_{\mct} 
\pm\frac12 \bl    [v], [\varphi_h] {\rm sgn}(n^{(i)}) n^{(i)}\br_{\mce^I} \\
&\hskip 1in
-\bl  [v],\{\varphi_h\}n^{(i)}-\gamma^\pm_i  [\varphi_h]\br_{\mce^I} \nonumber \\
& = \nonumber \big( \p_{x_i} v, \varphi_h\big)_{\mct}
+\Bl [v],(\gamma^\pm_i \pm \frac12 |n^{(i)}|) [\varphi_h] -\{\varphi_h\}n^{(i)}\Br_{\mce^I}.
\end{align}

Now let $v = v_h\in V^h_r$.  Then by \eqref{e2.5}, \eqref{Qidentity} and 
rearranging terms, we have
\begin{align}\nonumber
( \p_{x_i} v_h, \varphi_h\big)_{\mct} 
&= -\big( \p_{h,x_i}^\mp \varphi_h,v_h\big)_{\mct}
+\bl \cQ^\mp_i (\varphi_h),[v_h]n^{(i)}\br_{\mce}
+\bl  \gamma^\mp_i [\varphi_h],[v_h]\br_{\mce^I}\\
&\label{IBPLine2}
 = -\big( \p_{h,x_i}^\mp \varphi_h, v_h\big)_{\mct}
+\bl \{\varphi_h\},[v_h]n^{(i)}\br_{\mce^I}\\
&\nonumber\hskip 0.5in
+\bl  (\gamma^\mp_i \mp \frac12 |n^{(i)}|) [\varphi_h],[v_h]\br_{\mce^I}
+ \bl \varphi_h, v_h n^{(i)}\br_{\mce^B}.
\end{align}
Using the identity \eqref{IBPLine2} in equation \eqref{IBPLine}, we obtain
\begin{align}\label{GeneralIBP}
\big( \p_{h,x_i}^\pm v_h, \varphi_h\big)_{\mct}
= -\big( v_h,\p_{h,x_i}^\mp \varphi_h\big)_{\mct}
& +\bl  v_h, \varphi_h n^{(i)}\br_{\mce^B} \\
&+\bl  (\gamma ^\pm_i+\gamma^\mp_i)  [\varphi_h],[v_h]\br_{\mce^I}. \nonumber
\end{align}
The identity \eqref{IBPFormula1} then easily follows.
\end{proof}

\begin{theorem}\label{FirstCorollary}\
Suppose that $\gamma^+_i = - \gamma^-_i$ for all $i=1,2,\ldots d$.  
Then the integration by parts formulas
\begin{align}\label{DivFormula1}
\big( \Div^\pm_h \bv_h,  \varphi_h\big)_{\mct} 
= -\big( \bv_h, \nab_h^\mp \varphi_h\big)_{\mct}
+\bl  \bv_h \cdot n, \varphi_h\br_{\mce^B}
\intertext{and}
\label{DivFormula1b}
\big(\Div_h \bv_h,  \varphi_h\big)_{\mct} 
= -\big( \bv_h, \nab_h \varphi_h\big)_{\mct}
 +\bl \bv_h \cdot n, \varphi_h\br_{\mce^B}
\end{align}
hold for all $\bv_h\in \bV_r^h$ and $\varphi_h\in V_r^h$.  
\end{theorem}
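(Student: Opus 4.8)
The plan is to reduce this vector statement to the scalar integration by parts formula \eqref{IBPFormula1} already proved, applied one component at a time. Recall from Definition \ref{def2} that, writing $\bv_h = (v_1, v_2, \ldots, v_d)^t$ with each $v_i \in V_r^h$, the discrete divergence is simply $\Div_h^\pm \bv_h = \sum_{i=1}^d \partial_{h,x_i}^\pm v_i$, and that $\nab_h^\mp \varphi_h = (\partial_{h,x_1}^\mp \varphi_h, \ldots, \partial_{h,x_d}^\mp \varphi_h)^t$ by \eqref{e2.8}. The hypothesis $\gamma_i^+ = -\gamma_i^-$ is assumed to hold for every $i = 1, \ldots, d$, which is exactly what is needed so that \eqref{IBPFormula1} is valid for each index $i$.

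First I would expand the left-hand side of \eqref{DivFormula1} by linearity of the inner product as $\big(\Div_h^\pm \bv_h, \varphi_h\big)_{\mct} = \sum_{i=1}^d \big( \partial_{h,x_i}^\pm v_i, \varphi_h\big)_{\mct}$, and then apply the scalar formula \eqref{IBPFormula1} to each summand (taking $v_h = v_i$). This produces $\sum_{i=1}^d \bigl[ -\big(v_i, \partial_{h,x_i}^\mp \varphi_h\big)_{\mct} + \bl v_i, \varphi_h n^{(i)}\br_{\mce^B} \bigr]$. The $\pm/\mp$ bookkeeping is the only point requiring care: moving a $\partial_{h,x_i}^\pm$ derivative off $v_i$ and onto the test function turns it into $\partial_{h,x_i}^\mp$, so the resulting volume derivatives match the $\nab_h^\mp$ appearing on the right-hand side.

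Next I would reassemble the two sums into vector quantities. The volume terms combine, via the definition of the Euclidean inner product and \eqref{e2.8}, into $-\big(\bv_h, \nab_h^\mp \varphi_h\big)_{\mct}$, while the boundary terms collapse into $\bl \bv_h \cdot n, \varphi_h\br_{\mce^B}$ because $\sum_{i=1}^d v_i\, n^{(i)} = \bv_h \cdot n$ on each boundary face. This establishes \eqref{DivFormula1}.

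Finally, the central identity \eqref{DivFormula1b} follows at once by averaging the $+$ and $-$ instances of \eqref{DivFormula1}: using $\Div_h \bv_h = \frac12(\Div_h^- \bv_h + \Div_h^+ \bv_h)$ and $\nab_h \varphi_h = \frac12(\nab_h^- \varphi_h + \nab_h^+ \varphi_h)$, the volume terms average to $-\big(\bv_h, \nab_h \varphi_h\big)_{\mct}$ and the two identical boundary contributions average to the single term $\bl \bv_h \cdot n, \varphi_h\br_{\mce^B}$. There is no serious obstacle here — the entire analytic content, including the cancellation of the $\gamma_i^\pm$ jump terms that relied on $\gamma_i^+ = -\gamma_i^-$, is already contained in the preceding scalar theorem, and the present result is obtained purely by summation over components and averaging.
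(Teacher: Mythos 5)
Your proposal is correct and follows essentially the same route as the paper's own proof: expand $\Div_h^\pm \bv_h$ componentwise via \eqref{e2.11}, apply the scalar integration by parts formula \eqref{IBPFormula1} to each term, and reassemble using \eqref{e2.8}. Your averaging argument for \eqref{DivFormula1b} is a valid rendering of the paper's ``obtained similarly.''
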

 
\begin{proof}
If $\gamma^+_e = - \gamma^-_e$, then by \eqref{e2.11},
 \eqref{IBPFormula1} and \eqref{e2.8}, we have
\begin{align*}
\big(\Div_h^\pm \bv_h,\varphi_h\big)_{\mct}
& = \sum_{i=1}^d \big(\p_{h,x_i}^\pm v_{h,i}, \varphi_h\big)_{\mct} \\
&=\sum_{i=1}^d \Big( -\big( \p_{h,x_i}^\mp \varphi_h, v_{h,i}\big)_{\mct}
 + \bl v_{h,i}n^{(i)}, \varphi_h\br_{\mce^B}\Big)\\
& = -\big(\bv_h, \nab^\mp_h \varphi_h\big)_{\mct} +\bl   \bv_h \cdot n, 
\varphi_h\br_{\mce^B}.
\end{align*}
Formula \eqref{DivFormula1b} is obtained similarly.
\end{proof}

\begin{theorem}\label{SecondCorollary}
The formal adjoint of the operator $\Div_h^\pm $ (resp., $\Div_h$)
is $-\nab_{h,0}^{\mp}$ (resp., $-\nab_{h,0}$) with respect
to the inner product $(\cdot,\cdot)_{\mct}$ provided $\gamma_i^+=-\gamma_i^-$
for all $i=1,2,\ldots,d$; that is,
\begin{align}
(\Div_h^\pm \bv_h,\varphi_h)_{\mct} &= -(\bv_h,\nab_{h,0}^\mp \varphi_h)_{\mct}, \\
(\Div_h \bv_h,\varphi_h)_{\mct} &= -(\bv_h,\nab_{h,0} \varphi_h)_{\mct}
\end{align}
for all $\bv_h\in \bV_r^h,\ \varphi_h\in V_r^h.$
In addition, if $\gamma_i^+ = -\gamma_i^-$, then 
the formal adjoint of the operator $\Div_{h,{\bm 0}}^\pm$ (reps., $\Div_{h,{\bm 0}}$)
is $-\nab_{h}^{\mp}$ (resp., $-\nab_h$). 
\end{theorem}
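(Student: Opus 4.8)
The plan is to prove Theorem~\ref{SecondCorollary} by reducing everything to the integration by parts formulas \eqref{DivFormula1} and \eqref{DivFormula1b} already established in Theorem~\ref{FirstCorollary}, combined with the definition \eqref{dgpmDef} of the discrete derivatives with boundary data. The key observation is that the boundary terms $\bl \bv_h \cdot n, \varphi_h\br_{\mce^B}$ appearing in Theorem~\ref{FirstCorollary} are precisely what the ``$g$-modified'' operators are designed to absorb when $g$ (or $\bg$) is taken to be zero. So the entire theorem should follow by carefully unwinding how the boundary data appears in $\nab_{h,0}^\mp$ versus $\nab_h^\mp$.

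\emph{First}, for the statement involving $\nab_{h,0}^\mp$: I would take the formula \eqref{DivFormula1}, namely
\[
\big( \Div^\pm_h \bv_h,  \varphi_h\big)_{\mct}
= -\big( \bv_h, \nab_h^\mp \varphi_h\big)_{\mct}
+\bl  \bv_h \cdot n, \varphi_h\br_{\mce^B},
\]
and rewrite the right-hand side by recognizing the boundary term. By the definition \eqref{dgpmDef} with $g=0$, the $i$-th component of $\nab_{h,0}^\mp\varphi_h$ satisfies $(\p_{h,x_i}^{\mp,0}\varphi_h,\psi_h)_{\mct} = (\p_{h,x_i}^\mp\varphi_h,\psi_h)_{\mct} - \bl \varphi_h n^{(i)},\psi_h\br_{\mce^B}$ for all $\psi_h\in V_r^h$. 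Testing with $\psi_h = v_{h,i}$ and summing over $i$ yields $(\bv_h,\nab_{h,0}^\mp\varphi_h)_{\mct} = (\bv_h,\nab_h^\mp\varphi_h)_{\mct} - \bl \bv_h\cdot n,\varphi_h\br_{\mce^B}$. Substituting this into \eqref{DivFormula1} makes the boundary terms cancel exactly, leaving $(\Div_h^\pm\bv_h,\varphi_h)_{\mct} = -(\bv_h,\nab_{h,0}^\mp\varphi_h)_{\mct}$. The central (non-signed) case follows by averaging the $\pm$ cases, using \eqref{e2.8b} and \eqref{e2.11b}.

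\emph{Second}, for the dual statement involving $\Div_{h,\bm 0}^\pm$ and $-\nab_h^\mp$: the structure is symmetric. I would start from the boundary-data divergence definition, noting that $\Div_{h,\bm 0}^\pm\bv_h = \Div_h^\pm\bv_h + \text{(boundary contribution)}$ via \eqref{FirstOrderGDef}, so that $(\Div_{h,\bm 0}^\pm\bv_h,\varphi_h)_{\mct} = (\Div_h^\pm\bv_h,\varphi_h)_{\mct} - \bl\bv_h\cdot n,\varphi_h\br_{\mce^B}$. Combining with \eqref{DivFormula1} again cancels the boundary terms, giving $(\Div_{h,\bm 0}^\pm\bv_h,\varphi_h)_{\mct} = -(\bv_h,\nab_h^\mp\varphi_h)_{\mct}$, which is exactly the claimed adjoint relation.

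\emph{The main obstacle} I anticipate is purely bookkeeping: keeping the $\pm/\mp$ pairings consistent between the divergence and gradient sides, and verifying that the sign and normal conventions in \eqref{dgpmDef} make the boundary terms cancel rather than double. In particular the sign of the $\bl(g-v)n^{(i)},\varphi_h\br_{\mce^B}$ term in \eqref{dgpmDef} must be tracked carefully when it is moved from defining $\nab_{h,0}^\mp$ to defining $\Div_{h,\bm 0}^\pm$, since the two live on opposite sides of the inner product. There is no genuine analytic difficulty here—the hypothesis $\gamma_i^+=-\gamma_i^-$ has already done the real work in Theorem~\ref{FirstCorollary} by eliminating the interior stabilization terms—so the remaining argument is a short and essentially formal manipulation of boundary integrals.
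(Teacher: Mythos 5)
Your proposal is correct and follows essentially the same route as the paper's own proof: the paper likewise cites Theorem~\ref{FirstCorollary} together with the two identities $\big(\bv,\nab_{h,0}^\pm \varphi_h\big)_{\mct} = \big(\bv, \nab_h^\pm \varphi_h\big)_{\mct} -\bl \bv\cdot n,\varphi_h\br_{\mce^B}$ and $\big(\Div_{h,0}^\pm \bv,\varphi_h\big)_{\mct} = \big(\Div_h^\pm \bv,\varphi_h\big)_{\mct} -\bl \bv\cdot n,\varphi_h\br_{\mce^B}$, which are exactly the boundary-term cancellations you derive from \eqref{dgpmDef} with zero data. Your write-up simply makes explicit the componentwise bookkeeping that the paper leaves implicit.
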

\begin{proof}
This result immediately follows from Theorem \ref{FirstCorollary}
and the identities
\begin{align*}
\big(\bv,\nab_{h,0}^\pm \varphi_h\big)_{\mct} 
&= \big(\bv, \nab_h^\pm \varphi_h\big)_{\mct} 
-\bl \bv\cdot n,\varphi_h\br_{\mce^B},\\
\big(\Div_{h,0}^\pm \bv,\varphi_h\big)_{\mct} 
&= \big(\Div_h^\pm \bv,\varphi_h\big)_{\mct}
-\bl \bv\cdot n,\varphi_h\br_{\mce^B}
\end{align*}
for all $\bv\in \bV_h$ and $\varphi_h\in V_r^h$.
\end{proof}

\begin{theorem}\label{ThirdCorollary}
Suppose that $\gamma^+ = - \gamma^-$.  We then have
\begin{subequations}
\begin{align}\label{LapFormula1}
-\big( \Delta_h^{\pm +} v, \varphi_h\big)_{\mct} 
&= \big( \nab^+_h v, \nab^\mp_h \varphi_h\big)_{\mct}
-\bl  \nab_h^+ v\cdot n, \varphi_h\br_{\mce^B},\\
-\big(\Delta_h^{\pm -} v,\varphi_h\big)_{\mct}
& = \big( \nab^-_h v, \nab^\mp_h \varphi_h\big)_{\mct}
- \bl \nab_h^- v\cdot n, \varphi_h\br_{\mce^B}\\
\intertext{and}
-\big( \Delta_h v,\varphi_h\big)_{\mct}
& = \big( \nab_h v, \nab_h \varphi_h\big)_{\mct}
-\bl  \nab_h v\cdot n, \varphi_h\br_{\mce^B}
\end{align}
\end{subequations}
for all $v\in \cV_h$ and $\varphi_h\in V_r^h.$
In addition, under the same hypotheses on $\gamma_i^\pm$, we have
\begin{subequations}
\begin{align}
-\big( \Delta_{h,g}^{\pm +} v, \varphi_h\big)_{\mct} 
&= \big( \nab^+_{h,g} v, \nab^\mp_{h,0} \varphi_h\big)_{\mct},\\
-\big(\Delta_{h,g}^{\pm -} v,\varphi_h\big)_{\mct}
& = \big( \nab^-_{h,g} v, \nab^\mp_{h,0} \varphi_h\big)_{\mct}
\intertext{and}
-\big( \Delta_{h,g} v,\varphi_h\big)_{\mct}
& = \big( \nab_{h,g} v, \nab_{h,0} \varphi_h\big)_{\mct}.
\end{align}
\end{subequations}

\end{theorem}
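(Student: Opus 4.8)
The plan is to recognize all of the discrete second-order operators in the statement as compositions of a discrete divergence with a discrete gradient, and then to reduce everything to the first-order integration by parts formulas already established in Theorem \ref{FirstCorollary}. Concretely, Remark \ref{rem2}(c) identifies $\Del_h^{\pm+} v = \Div_h^\pm \nab_h^+ v$, $\Del_h^{\pm-} v = \Div_h^\pm \nab_h^- v$, and $\Del_h v = \Div_h \nab_h v$, with the analogous identities for the $g$-dependent operators. Moreover, by Remark \ref{rem2}(b) the discrete gradients $\nab_h^\pm v$ and $\nab_h v$ (and likewise $\nab_{h,g}^\pm v$, $\nab_{h,g} v$) belong to $\bV_r^h$ whenever $v\in\cV_h$, so each of them is an admissible vector field for the divergence formulas \eqref{DivFormula1} and \eqref{DivFormula1b}. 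The hypothesis $\gamma^+ = -\gamma^-$ is precisely what is required to invoke Theorem \ref{FirstCorollary}.

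For the homogeneous formulas I would simply apply \eqref{DivFormula1} with $\bv_h = \nab_h^+ v$ to obtain
\[
\big(\Div_h^\pm \nab_h^+ v, \varphi_h\big)_{\mct} = -\big(\nab_h^+ v, \nab_h^\mp \varphi_h\big)_{\mct} + \bl \nab_h^+ v\cdot n, \varphi_h\br_{\mce^B},
\]
and rewriting the left-hand side as $\big(\Del_h^{\pm+} v, \varphi_h\big)_{\mct}$ and negating yields \eqref{LapFormula1}. The $\Del_h^{\pm-}$ formula follows identically with $\bv_h = \nab_h^- v$, and the $\Del_h$ formula follows from \eqref{DivFormula1b} with $\bv_h = \nab_h v$. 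The only bookkeeping to watch is that the divergence sign $\pm$ passes to the test-function gradient as $\mp$, which is exactly the index pattern in the statement.

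For the $g$-dependent formulas the same first step, applied with $\bv_h = \nab_{h,g}^+ v$, gives
\[
-\big(\Del_{h,g}^{\pm+} v, \varphi_h\big)_{\mct} = \big(\nab_{h,g}^+ v, \nab_h^\mp \varphi_h\big)_{\mct} - \bl \nab_{h,g}^+ v\cdot n, \varphi_h\br_{\mce^B}.
\]
Here the boundary term does not drop out but is instead absorbed: invoking the identity from Theorem \ref{SecondCorollary}, namely $\big(\bv, \nab_{h,0}^\mp\varphi_h\big)_{\mct} = \big(\bv, \nab_h^\mp\varphi_h\big)_{\mct} - \bl \bv\cdot n, \varphi_h\br_{\mce^B}$ with $\bv = \nab_{h,g}^+ v$, collapses the right-hand side exactly to $\big(\nab_{h,g}^+ v, \nab_{h,0}^\mp\varphi_h\big)_{\mct}$, which is the claimed identity. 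The remaining $g$-formulas follow by the same substitution ($\bv_h = \nab_{h,g}^- v$, respectively $\bv_h = \nab_{h,g} v$ with \eqref{DivFormula1b}).

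Since each step is a direct application of an earlier result, I do not anticipate a genuine obstacle; the only real care is the index and sign tracking (keeping $\Div_h^\pm$ paired with $\nab_h^\mp$ on the test side), and recognizing that in the $g$-case the surviving boundary term is eliminated by the adjoint relation of Theorem \ref{SecondCorollary} rather than by any cancellation internal to the divergence formula itself.
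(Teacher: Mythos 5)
Your proof is correct and follows essentially the same route as the paper, which proves the theorem in one line by citing Theorems \ref{FirstCorollary}--\ref{SecondCorollary} with $\bv_h = \nab_h^\pm v$ and $\bv_h = \nab_{h,g}^\pm v$ together with Remark \ref{rem2}(c). You simply spell out the substitutions and the absorption of the boundary term via the adjoint identity in more detail; the only cosmetic difference is that for the $g$-dependent formulas one could invoke Theorem \ref{SecondCorollary} directly rather than applying \eqref{DivFormula1} first and then absorbing the boundary term, but the two are equivalent.
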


\begin{proof}
These formulas follow from Theorems \ref{FirstCorollary}--\ref{SecondCorollary}
with $\bv_h = \nab_h^\pm v\in \bV_r^h$ and $\bv_h = \nab_{h,g}^\pm v\in \bV_r^h$ 
(cf. Remark \ref{rem2}c).
\end{proof}

\subsection{Relationships with finite difference operators on Cartesian grids}
\label{sec-4.5}
We now show that when $\gamma_e^\pm =0$ the operators 
$\partial_{h,x_i}^\pm$ and $\partial_{h,x_i}$ are natural extensions 
(on general grids) of the backward/forward and central difference 
operators defined on Cartesian grids. 

Suppose $\cT_h$ is a rectangular mesh over $\Omega$ which is aligned
with the underlying Cartesian coordinate system. Let $h_i$ denote the 
mesh size of $\cT_h$ in the direction of $x_i$.  
Notice that when $r=0$, the function $\partial_{h,x_i}^\pm v$ is
a piecewise constant function over the mesh $\cT_h$. Setting 
$\varphi=1$ in \eqref{e2.5a} we get
\begin{align}\label{e4.5}
\bigl( \partial_{h,x_i}^\pm v, 1 \bigr)_K 
= \langle \cQ_i^\pm(v) n_K^{(i)}, 1\rangle_{\partial K}.
\end{align}

We only consider the two dimensional case. Then $K$ has four edges with 
$n_K=(-1,0)^t,\ (1,0)^t,\ (0,1)^t,\ (0,-1)^t$. Let $v$ be a grid function over
$\cT_h$ and $v_{ij}$ denote the value of $v$ on the $(i,j)$ cell/element.
By \eqref{e4.5} we obtain
\begin{align}\label{e4.6}
&\partial_{h,x_1}^+ v_{ij} = \frac{ v_{i+1 j}-v_{ij} }{h_1},\qquad
\partial_{h,x_1}^- v_{ij} = \frac{ v_{ij}-v_{i-1 j} }{h_1},\\
&\partial_{h,x_2}^+ v_{ij} = \frac{ v_{ij+1}-v_{ij} }{h_2},\qquad
\partial_{h,x_2}^- v_{ij} = \frac{ v_{ij}-v_{ij-1} }{h_2}.  \label{e4.7}
\end{align}
Consequently, we have
\begin{align}\label{e4.8}
&\partial_{h,x_1} v_{ij} = \frac{ v_{i+1 j}-v_{i-1j} }{2h_1},  \qquad
\partial_{h,x_2} v_{ij} = \frac{ v_{ij+1}-v_{ij-1} }{2h_2}.
\end{align}
Hence,  $\partial_{h,x_i}^-$ and $\partial_{h,x_i}^+$ coincide respectively
with backward and forward difference operators in the direction $x_i$, 
while $\partial_{h,x_i}$ results in the central difference operator in the 
direction $x_i$.

Using the operators $\partial_{h,x_i}^\pm$ as building blocks, we can also 
build DG FE operators that are natural extensions of the standard finite 
difference approximations for second order partial derivatives.
From \eqref{e4.6} and \eqref{e4.7}, we have 
\begin{align}
& \partial_{h,x_1}^+ \partial_{h,x_1}^- v_{ij} = \partial_{h,x_1}^- \partial_{h,x_1}^+ v_{ij}
	= \frac{ v_{i-1 j} - 2 v_{i j} + v_{i+1 j}}{h_1^2}, 
	\label{e4.9} \\
& \partial_{h,x_1} \partial_{h,x_1} v_{ij} 
	= \frac{ v_{i-2 j} - 2 v_{i j} + v_{i+2 j}}{4 h_1^2}, 
	\label{e4.10} \\
&\frac{ \partial_{h,x_1}^+ \partial_{h,x_2}^- v_{ij} 
+ \partial_{h,x_1}^- \partial_{h,x_2}^+ v_{ij}}{2} \label{e4.11} \\ 
	& \hskip 0.6in = \frac{v_{i-1 j} + v_{i j-1} - v_{i-1 j+1} - 2 v_{i j} - v_{i+1 j-1} + v_{i j+1} + v_{i+1 j}}{2h_1 h_2}, \nonumber \\
& \partial_{h,x_1} \partial_{h,x_2}
	= \frac{\partial_{h,x_1}^+ \partial_{h,x_2}^+ v_{ij} 
+  \partial_{h,x_1}^+ \partial_{h,x_2}^- v_{ij} 
+ \partial_{h,x_1}^- \partial_{h,x_2}^+ v_{ij} 
+ \partial_{h,x_1}^- \partial_{h,x_2}^- v_{ij} }{4} \label{e4.12} \\
	& \hskip 0.62in  = \frac{ v_{i+1 j+1} - v_{i+1 j-1} - v_{i-1 j+1} 
+ v_{i-1 j-1}}{4h_1 h_2}. \nonumber
\end{align}
Thus, for $k \neq \ell$, the discrete differential operator 
$(\partial_{h,x_k}^+ \partial_{h,x_k}^- 
+ \partial_{h,x_k}^- \partial_{h,x_k}^+ ) / 2$ coincides with the 
standard second order 3-point central difference operator for nonmixed 
second order derivatives, $\partial_{h,x_k} \partial_{h,x_k}$ coincides 
with the standard second order 3-point central difference operator 
with mesh size $2h$, 
$(\partial_{h,x_k}^+ \partial_{h,x_\ell}^- 
+ \partial_{h,x_k}^- \partial_{h,x_\ell}^+ ) / 2$ coincides with the 
second order $7$-point central difference operator for mixed second 
order derivatives, and $\partial_{h,x_k} \partial_{h,x_\ell}$ 
coincides with the standard second order 
central difference operator for mixed second order derivatives.

\section{Implementation Aspects}\label{sec-5}

In this section we explain how the discrete partial derivatives are computed.
Denote by $\hat{K}:=\big\{x\in \mathbf{R}^d:\ x_i\ge 0,\ \sum_{i=1}^d x_i<1\big\}$ the reference simplex,
and let $\{\hat{\varphi}^{(j)}_h\}_{j=1}^{n_r}$ denote a basis of $\pol_r(\hat{K})$.
Here $n_r = \dim \pol_r = \binom{d+r}{r}$.  For example,
we could take the basis to be the space of monomials of degree  less than 
or equal to $r$: $\{x^\alpha:\ |\alpha|\le r\}$.
For $K\in \mct$, let $F_K:\hat{K}\to K$ be the affine mapping from $\hat{K}$ onto $K$,
and and let $\varphi^{(j,K)}_h:K\to \mathbf{R}$ be defined by $\varphi^{(j,K)}_h(x) = \hat{\varphi}_h^{(j)}(\hat{x})$, 
where $x = F_K(\hat{x})\in K$.  It is then easy to see that $\{\varphi^{(j,K)}_h\}_{j=1}^{n_r}$
is a basis of $\pol_r(K)$.  
We then define the mass matrix $\tilde{\bm M}_K\in \mathbf{R}^{n_r\times n_r}$
associated with $K$ as
\begin{align*}
\big(\tilde{\bm M}_K\big)_{\ell ,m } = \bigl(\varphi_h^{(\ell)},\varphi_h^{(m)} \bigr)_K\qquad \ell,m=1,2,\ldots,n_r.
\end{align*}
By a change of variables we easily find $\tilde{\bm M}_K = |\det(DF_K)| \tilde{\bm M} = d! |K| \tilde{\bm M}$, 
where $\tilde{\bm M}$ is the mass matrix associated with $\hat{K}$, $DF_K$ is the Jacobian
of the mapping $F_K$ and $|K|$ is the $d$-dimensional volume of the simplex $K$.

Next, given $v\in \cV_h$,
write $\p_{h,x_i}^\pm v\big|_K = \sum_{j=1}^{n_r} \alpha^{\pm (j)}_i \varphi^{(j,K)}_h\in \pol_r(K)$ with 
$\alpha^{\pm (j)}_i = \alpha^{\pm (j)}_{i,K}\in \mathbf{R}$
($i=1,2,\ldots,d,\ j=1,2,\ldots,n_r)$.
We then define the vector ${\bm b}^\pm_{i} = {\bm b}^\pm_{i,K}(v)\in \mathbf{R}^{n_r}$ by
\begin{align*}
{\bm b}_i^{\pm (j)} 
&= \bl \cQ_i^\pm(v)n_K^{(i)},\varphi^{(j,K)}_h\br_{\p K} - \big(v,\p_{x_i} \varphi_h^{(j,K)}\big)_K\\
&\hskip 0.5in 
+\sum_{e\subset \p K\backslash \p \Ome} \gamma_{i,e}^\pm \bl [v],[\varphi_h^{(j,K)}]\br_e, \qquad
j=1,2,\ldots,n_r.
\end{align*}
Then by \eqref{e2.5a}, the coefficients $\{\alpha_i^{(j)}\}_{j=1}^{n_r}$ are uniquely determined
by the linear equation $M_K {\bm \alpha^\pm_i} = {\bm b}^\pm _i$,
where ${\bm \alpha}^\pm_i = (\alpha^{\pm (1)}_i,\alpha^{\pm (2)}_i,\ldots ,\alpha^{\pm (n_r)}_i)^t$.
Equivalently, we have ${\bm \alpha}_i^\pm  = \frac{1}{d! |K|} \tilde{\bm M}^{-1} {\bm b}_i^\pm$,
where $\tilde{\bm M}^{-1}$ is the inverse matrix of the reference mass matrix which
can be computed offline.

Now if $v = v_h \in V_r^h$, then we may write $v_h\big|_K = \sum_{j=1}^{n_r} \beta^{(j)} \varphi_h^{(j,K)}$
for some constants $\beta^{(j)} = \beta^{(j,K)}\in \mathbf{R}$.  We then define the matrix
$\tilde{\bm Q}^\pm_i = \tilde{\bm Q}_{i,K}^\pm \in \mathbf{R}^{n_r\times n_r}$ by
\begin{align*}
\big(\tilde{\bm Q}_i^\pm\big)_{\ell,m} 
&= \bl \cQ^\pm_i (\varphi_h^{(m,K)})n_K^{(i)},\varphi_h^{(\ell,K)}\br_{\p K}
 -\big(\varphi_h^{(m,K)},\p_{x_i} \varphi_h^{(\ell,K)}\big)_K,\\
 &\qquad + \sum_{e\subset \p K\backslash \p \Ome} \gamma_{i,e}^\pm \bl [\varphi_h^{(\ell,K)}],[\varphi_h^{(m,K)}]\br_e
\end{align*}
for $\ell,m = 1,2,\ldots,n_r$.  Again, writing
$\p_{h,x_i}^\pm v\big|_K = \sum_{j=1}^{n_r} \alpha_i^{\pm (j)} \varphi_h^{(j,K)}$,
we find that the coefficients satisfy ${\bm \alpha}_i^\pm  
= \frac{1}{d! |K|} \tilde{\bm M}^{-1} \tilde{\bm Q}_i^\pm {\bm \beta}$,
where ${\bm \beta} = \big(\beta^{(1)},\beta^{(2)},\ldots,\beta^{(n_r)}\big)^t$.

\section{Applications}\label{sec-6}

In this section, we apply our DG FE differential calculus 
framework to constructing numerical methods for several different
types of PDEs. Essentially, we replace the (continuous)
differential operator by its discrete counterpart.  In addition,
we add a stability term which ensures that the discrete problem
is well-posed and also enforces the given boundary conditions
weakly within the variational formulation.
Throughout the section, we assume the penalty parameters $\gamma_i$
(which appear in the definition of the discrete differential operators)
are zero in the discussion below. 

\subsection{Second order linear elliptic PDEs}\label{sec-5-1}
\subsubsection{\bf The Poisson equation}\label{sec-5-1-1}

As our first example, we consider the simplest linear second order 
PDE, the Poisson equation with Dirichlet boundary conditions:
\begin{subequations}
\label{Poisson}
\begin{alignat}{2}
\label{PoissonA}
-\Del u & = f\qquad &&\text{in }\Ome,\\
\label{PoissonB}
u & = g\qquad &&\text{on }\p\Ome,
\end{alignat}
\end{subequations}
where $f\in L^2(\Ome)$ and $g\in L^2(\p\Ome)$ are two given functions.
We then consider the following discrete version of \eqref{Poisson}: Find $u_h\in V_h^r$ such that
\begin{align}\label{DPoisson}
-\Del_{h,g} u_h +j_{h,g}(u_h) = \cP_r^hf .
\end{align}
Here, $j_{h,g}(\cdot):\cV_h\to V^h_r$ is the unique operator satisfying
\begin{align} \label{jhDef}
(j_{h,g}(v),\varphi_h)_{\mct} = \bl \eta_1 [v],[\varphi_h]\br_{\mce^I} + \bl \eta_1 (v-g),\varphi_h\br_{\mce^B},
\end{align}
and $\eta_1$ is a penalty parameter 
that is piecewise constant with respect to the set of edges.

Problem \eqref{DPoisson} has several interpretations.
On the one hand, by the definition of the discrete Laplace operator,
the problem is the twofold saddle point problem
\begin{align*}
-{\rm tr}(\tilde{\bm r}_h) +j_h (u_h) &= \cP_r^h f,\\
\tilde{\bm r}_h = D_h \bq_h,\quad \bq_h &= \nab_{h,g} u_h,
\end{align*}
with $\bq_h\in \bV_r^h$ and $\tilde{\bm r}_h \in \widetilde{\bm V}_r^h$.
Namely, problem \eqref{DPoisson} is equivalent 
to finding $u_h\in V_h$, $\bq_h\in \bV^r_h$ and $\tilde{\bm r}_h\in \widetilde{\bm V}_r^h$
such that 
\begin{subequations}
\label{dualmixed}
\begin{alignat}{2}
\label{dualmixed1}
& (\bq_h,\btau_h)_{\mct} 
 = \bl \{u_h\},[\btau_h]\cdot n\br_{\mce^I} - (u_h,\Div \btau_h)_{\mct}
 +\bl g,\btau_h\cdot n\br_{\mce^B}\quad
&&\forall \btau_h\in \bV_r^h,\\
\label{dualmixed2}
& \big(\us{{\bm r}}_h,{\us {\bm \mu}}_h\big)_{\mct} 
 = \bl \{\bq_h\},[\tilde{\bm \mu}_h]\br_{\mce} - (\bq_h,\Div \tilde{\bm \mu}_h)_{\mct}
\quad &&\forall \tilde{\bm \mu}_h\in \widetilde{\bm V}_h^r,\\
\label{dualmixed3}
&-\big({\rm tr}(\tilde{\bm r}_h),\varphi_h\big)_{\mct} 
+\big(j_{h,g}(u_h),\varphi_h\big)_{\mct}
 = (f,v)_{\mct}\quad &&\forall \varphi_h\in V_r^h.
\end{alignat}
\end{subequations}
On the other hand,  by Theorem \ref{FirstCorollary},
we can write problem \eqref{DPoisson} in its primal form:
find $u_h\in V^h_r$ satisfying
\begin{align}\label{DPoissonE}
(\nab_{h,g} u_h,\nab_h \varphi_h)_{\mct}
-\bl \nab_{h,g} u_h\cdot n,\varphi_h\br_{\mce^B} 
+\big(j_{h,g}(u_h),\varphi_h\big)_{\mct}
& = (f,\varphi_h)_{\mct}
\end{align}
for all $\varphi\in V_r^h$. Alternatively, by Theorem \ref{SecondCorollary},
we may write
\begin{align*}
(\nab_{h,g} u_h,\nab_{h,0} \varphi_h)_{\mct}
+\big(j_{h,g}(u_h),\varphi_h\big)_{\mct}
& = (f,\varphi_h)_{\mct}\qquad \forall \varphi_h\in V_r^h.
\end{align*}
Finally,  in the case $g=0$, problem \eqref{DPoissonE} can
be  viewed as finding a minimizer of the functional 
\begin{align}\label{FunctionalJ}
v_h \to  \frac12 \int_\Ome |\nab_{h,0} v_h|^2\, dx 
+\sum_{e\in \mce} \frac12 \int_e \eta_1 \big|[v_h]\big|^2\, ds
-\int_\Ome fv_h\, dx
\end{align}
over all $v_h\in V^h_r$.

We now discuss the well-posedness of problem \eqref{DPoisson} as well as relate
the discretization to other DG schemes. 
Let $\bq_h = \nab_{h,g} u_h$, that is, $\bq_h\in \bV_r^h$ satisfies \eqref{dualmixed1}.\
Then by \eqref{DPoissonE}, we have
\begin{align}\label{DPoissonEwithq}
(\bq_h,\nab_h \varphi_h)_{\mct}
 -\bl \bq_h \cdot n,\varphi_h\br_{\mce^B}  +\big(j_{h,g}(u_h),\varphi_h\big)_{\mct}
= (f,\varphi_h)_{\mct}
\end{align}
for all $\varphi_h\in V^h_r$.  
By the definition of the discrete gradient and integration by parts, we have
\begin{align*}
(\bq_h,\nab_h \varphi_h)_{\mct}
& = \bl [\bq_h]\cdot n,\{\varphi_h\} \br_{\mce} -(\Div \bq_h,\varphi_h,)_{\mct} \\
& = (\bq_h,\nab \varphi_h)_{\mct} -\bl \{\bq_h\}\cdot n,[\varphi_h]\br_{\mce^I}.
\end{align*}
Using this identity in \eqref{DPoissonEwithq}, we find
\begin{align}\label{LDG2}
(\bq_h,\nab \varphi_h)_{\mct} -\bl \{\bq_h\}\cdot n,[\varphi_h]\br_{\mce} 
+\big(j_{h,g}(u_h),\varphi_h\big)_{\mct}
& = (f,\varphi_h)_{\mct}\quad
\forall \varphi\in V_r^h.
\end{align}

In summary, problem \eqref{DPoisson} is equivalent to the 
mixed formulation \eqref{dualmixed1}, \eqref{LDG2}.
This formulation  is nothing more than the local discontinuous Galerkin 
(LDG) method  \cite{Cockburn98,abcm01}.  
We then have (see, e.g., \cite{Castilo00})
\begin{theorem}\label{LDGLemma}
Let $r\ge 1$, $\eta_1>0$ and $\gamma_i = 0\ (i=1,2,\ldots,d)$.  Then there exists a 
unique $u_h\in V^h_r$ satisfying \eqref{DPoisson}.  Moreover,
if $\eta_1 = \mathcal{O}(h^{-1})$ and if $u\in H^{r+2}(\Ome)$,  there holds
\[
\|u-u_h\|_{L^2(\Ome)}+h \|\nab u-\nab_{h,g} u_h\|_{L^2(\Ome)}\le C h^{r+1}\|u\|_{H^{r+2}(\Ome)}.
\]
\end{theorem}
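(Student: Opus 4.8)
The strategy is to exploit the equivalence, established in the discussion preceding the theorem, between problem \eqref{DPoisson} and the LDG mixed formulation \eqref{dualmixed1}, \eqref{LDG2}. Once this equivalence is in hand, both the well-posedness and the error estimate are instances of the standard LDG theory (see \cite{Castilo00,Cockburn98,abcm01}); below I outline the self-contained arguments and indicate precisely where the cited theory is invoked. Since \eqref{DPoisson} is a square linear system on the finite-dimensional space $V^h_r$, existence is equivalent to uniqueness, so I would first prove uniqueness via an energy argument, and then reduce the error analysis to a coercivity estimate plus a duality (Aubin--Nitsche) argument.

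\textbf{Well-posedness.} I would work with the equivalent primal form from Theorem \ref{SecondCorollary}, namely $(\nab_{h,g}u_h,\nab_{h,0}\varphi_h)_{\mct}+(j_{h,g}(u_h),\varphi_h)_{\mct}=(f,\varphi_h)_{\mct}$ for all $\varphi_h\in V^h_r$. Setting $f=0$, $g=0$ and testing with $\varphi_h=u_h$, then using \eqref{jhDef} with $g=0$, gives
\[
\|\nab_{h,0} u_h\|_{L^2(\Ome)}^2+\bl \eta_1[u_h],[u_h]\br_{\mce^I}+\bl \eta_1 u_h,u_h\br_{\mce^B}=0.
\]
Because $\eta_1>0$, every term is nonnegative, so each must vanish: $\nab_{h,0}u_h=0$, $[u_h]=0$ on all $e\in\mce^I$, and $u_h=0$ on $\mce^B$. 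The vanishing jumps and boundary trace force $u_h\in\cV_h\cap H^1_0(\Ome)$; since the boundary trace is zero the boundary correction in $\nab_{h,0}$ drops out, so $\nab_{h,0}u_h=\nab_h u_h$, and by Corollary \ref{cor4.1} this coincides with the classical gradient $\nab u_h$. Hence $\nab u_h=0$ with $u_h\in H^1_0(\Ome)$, and Poincar\'e's inequality yields $u_h=0$. This establishes uniqueness and therefore existence.

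\textbf{Error estimate.} For the convergence rates I would equip $V^h_r$ with the energy norm $\|v\|_h^2:=\|\nab_{h,0}v\|_{L^2(\Ome)}^2+\bl\eta_1[v],[v]\br_{\mce^I}+\bl\eta_1 v,v\br_{\mce^B}$ and proceed by the classical three steps. First, consistency: the exact solution $u$ satisfies the same variational identity as $u_h$, with the penalty contributions vanishing since $u$ is continuous and $u=g$ on $\p\Ome$; this yields Galerkin orthogonality. Second, coercivity and boundedness of the induced bilinear form in $\|\cdot\|_h$, where the scaling $\eta_1=\mathcal{O}(h^{-1})$ together with the usual trace and inverse inequalities guarantees coercivity; combined with the approximation properties of $\cP_r^h$ from Section \ref{sec-4.1} this gives the $\mathcal{O}(h^{r+1})$ bound on $\|u\|_{H^{r+2}(\Ome)}$ for the energy-norm error, hence for $h\|\nab u-\nab_{h,g}u_h\|_{L^2(\Ome)}$. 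Third, the optimal $L^2$ estimate is obtained by an Aubin--Nitsche duality argument using the $H^2$-regularity of the adjoint Poisson problem. The main obstacle is precisely this duality step: one must introduce the dual solution, insert its interpolant, and carefully track the consistency (inter-element and boundary) terms so that the penalty scaling produces the extra power of $h$. Since the scheme is exactly the LDG method, I would either carry out this duality argument directly or, more economically, invoke the corresponding result of \cite{Castilo00}, which supplies both the energy and the $L^2$ bounds stated in the theorem.
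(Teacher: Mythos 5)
Your proposal takes essentially the same route as the paper: the paper gives no standalone proof, instead establishing (in the discussion preceding the theorem) that \eqref{DPoisson} is equivalent to the LDG mixed formulation \eqref{dualmixed1}, \eqref{LDG2} and then citing \cite{Castilo00} for both well-posedness and the error estimates, which is precisely the backbone of your argument. Your supplementary details---the energy/Poincar\'e uniqueness argument in the primal form and the sketched consistency--coercivity--duality error analysis---are correct and consistent with the standard LDG theory being invoked.
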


\begin{remark}
A similar methodology can be used to construct
DG schemes for the Neumann problem
\begin{align}
\label{NMPoisson}
-\Del u & = f\quad \text{in }\Ome,\qquad
\frac{\p u}{\p n}  = q\quad \text{on }\p\Ome.
\end{align}
In this case, the DG method is to find $u_h\in V^h_r$ satisfying
\begin{align*}
-\Div_{h,\bq} \nab_h u_h +\tilde{j}_h(u_h) = \cP_r^h f,
\end{align*}
where $\tilde{j}_h(u_h):\mathcal{V}^h\to V^h_r$ is 
the operator satisfying 
$\big(\tilde{j}_h(v),\varphi_h\big)_{\mct} = \bl\eta_1  [v],[\varphi_h]\br_{\mce^I}$ 
for all $\varphi_h\in V_r^h$, and $\bq = q n$.  We again recover
the LDG method for the Neumann problem \eqref{NMPoisson} (cf.\,\cite{Castilo00}).
\end{remark}
\subsubsection{\bf The DWDG method for the Poisson problem} \label{dwdg}
In this subsection we formulate a new DG method for the Poisson 
problem \eqref{Poisson} that inherits better stability than the LDG method 
described above.  The new scheme, called the symmetric 
dual-wind discontinuous Galerkin (DWDG) method, 
is simply given by
\begin{align}\label{AltPoisson}
- \frac{\Del_{h,g}^{-+}u_h+\Del_{h,g}^{+-} u_h}{2} + j_{h,g}(u_h)& = \cP_r^h f, 
\end{align}
where $j_{h,g}(u_h)$ is defined by \eqref{jhDef}.
By Theorem \ref{ThirdCorollary}, 
 problem \eqref{AltPoisson} is equivalent to finding $u_h\in V^h_r$ such that
\begin{align}\label{AltPoissonPrimal}
&\frac12\Big( (\nab_{h,g}^+ u_h,\nab_{h,0}^+ v_h)_{\mct}+(\nab_{h,g}^- u_h,\nab_{h,0}^- v_h)_{\mct}\Big)
+ j_{h,g}(u_h) = (f,v_h)_{\mct}
\end{align}
for all $v_h\in V^h_r$.
Equivalently, in the case $g=0$, problem \eqref{AltPoisson} asks to
find the unique minimizer
of the functional 
\begin{align*}
v_h \to \frac14 \int_\Ome \Big(|\nab_{h,0}^+ v_h|^2+|\nab_{h,0}^- v_h|^2\Big)\, dx 
+\sum_{e\in \mce} \frac12 \int_e \eta_1 \big|[v_h]\big|^2\, ds
-\int_\Ome fv_h\, dx
\end{align*}
over all $v_h\in V^h_r$ (compare to \eqref{FunctionalJ}).
A complete convergence analysis of the symmetric DWDG method for the Poisson problem 
is presented in \cite{LewisNeilan12}.  Here, we summarize the main results, namely, 
well-posedness and optimal rates of convergence.

\begin{theorem}[\cite{LewisNeilan12}]\label{UniquenessTheorem}
Set $\gamma _{\min} :=\min_{e\in \mce} h_e^{-1}\eta_1(e)$.
Suppose that there exists at least one simplex $K \in\mct$
with exactly one boundary edge/face.
Then there exists a unique solution to \eqref{AltPoisson} provided $\gamma_{\min}\ge 0$.
Furthermore, if the triangulation is quasi-uniform, and if each simplex in the triangulation has at most 
one boundary face/edge, then there exists a constant $C_* > 0$ independent of $h$ and $\eta_1$ 
such that
problem \eqref{AltPoisson} has a unique solution provided $\gamma_{\min} > - C_*$.
\end{theorem}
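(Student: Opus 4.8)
The plan is to rewrite the primal form \eqref{AltPoissonPrimal} (obtained from \eqref{AltPoisson} via Theorem \ref{ThirdCorollary}) as: find $u_h\in V_r^h$ with $a_h(u_h,v_h)=(f,v_h)_{\mct}-a_h^{g}(v_h)$ for all $v_h\in V_r^h$, where the symmetric bilinear form is
\begin{equation*}
a_h(u_h,v_h):=\tfrac12\big((\nab_{h,0}^+u_h,\nab_{h,0}^+v_h)_{\mct}+(\nab_{h,0}^-u_h,\nab_{h,0}^-v_h)_{\mct}\big)+\bl\eta_1[u_h],[v_h]\br_{\mce},
\end{equation*}
and $a_h^{g}$ gathers the boundary-data contributions, which affect only the right-hand side. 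Since $V_r^h$ is finite dimensional and $a_h$ is symmetric, the associated linear system is square, so existence and uniqueness are each equivalent to $a_h$ being positive definite on $V_r^h$. Thus the whole theorem reduces to proving $a_h(v_h,v_h)>0$ for every $v_h\neq0$ under the two sets of hypotheses on $\gamma_{\min}$.

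The algebraic starting point is the parallelogram identity $\tfrac12(|{\bm a}|^2+|{\bm b}|^2)=\big|\tfrac{{\bm a}+{\bm b}}2\big|^2+\big|\tfrac{{\bm a}-{\bm b}}2\big|^2$ applied pointwise with ${\bm a}=\nab_{h,0}^+v_h$ and ${\bm b}=\nab_{h,0}^-v_h$, which recasts the two-wind energy as
\begin{equation*}
a_h(v_h,v_h)=\|\nab_{h,0}v_h\|_{L^2(\mct)}^2+\tfrac14\|\nab_{h,0}^+v_h-\nab_{h,0}^-v_h\|_{L^2(\mct)}^2+\bl\eta_1[v_h],[v_h]\br_{\mce}.
\end{equation*}
Because the $g=0$ boundary corrections in $\nab_{h,0}^\pm$ are identical, the wind difference satisfies $\nab_{h,0}^+v_h-\nab_{h,0}^-v_h=\nab_h^+v_h-\nab_h^-v_h$, and by Remark \ref{rem1}(e) (with $\gamma_i^\pm=0$) its $i$-th component is exactly the $L^2$ lifting into $V_r^h$ of $|n^{(i)}|[v_h]$ over $\mce^I$. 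Hence the middle term is a genuine, strictly jump-penalizing functional; this is the mechanism by which the dual-wind energy controls interior jumps without any help from $\eta_1$.

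For the first statement, $\gamma_{\min}\ge0$ forces every $\eta_1(e)\ge0$, so all three terms are nonnegative and $a_h$ is positive semidefinite; it then suffices to show the kernel is trivial. If $a_h(v_h,v_h)=0$ then $\nab_{h,0}v_h=0$ and $\nab_h^+v_h=\nab_h^-v_h$, whence $\nab_{h,0}^+v_h=\nab_{h,0}^-v_h=0$. The plan here is a structural lemma: vanishing of both one-sided homogeneous gradients forces $[v_h]=0$ on every interior edge (from the jump-lifting term), and then, since the boundary-corrected central gradient $\nab_{h,0}v_h$ sees the Dirichlet trace, forces $v_h$ to be globally constant with that constant equal to zero. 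The hypothesis that some $K\in\mct$ carries exactly one boundary face is used precisely to anchor this constant on $K$ and then propagate $v_h\equiv0$ across the connected mesh.

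For the second statement the penalty may be mildly negative, so semidefiniteness is lost and genuine coercivity with an explicit constant is required. The plan is to prove a discrete jump-control inequality $\tfrac14\|\nab_h^+v_h-\nab_h^-v_h\|_{L^2(\mct)}^2\ge c\sum_{e\in\mce^I}h_e^{-1}\|[v_h]\|_{L^2(e)}^2$, with $c>0$ depending only on shape-regularity, quasi-uniformity, and the at-most-one-boundary-face condition, via lower bounds for the lifting operators together with inverse and trace inequalities. Since $\gamma_{\min}>-C_*$ bounds the negative part of $\bl\eta_1[v_h],[v_h]\br_{\mce}$ by a multiple of $\sum_e h_e\|[v_h]\|_{L^2(e)}^2$, choosing $C_*$ comparable to $c$ lets the interior jump control strictly absorb the interior negative penalty, while the boundary negative penalty is absorbed using the trace built into $\|\nab_{h,0}v_h\|^2$; what remains is a strictly positive lower bound, i.e.\ coercivity. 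The main obstacle is exactly this lower jump-control bound: lifting operators are surjective onto edge data only up to the algebraic coupling between faces sharing an element, so quasi-uniformity and the at-most-one-boundary-face hypothesis are what keep $c$ (hence $C_*$) independent of $h$ and prevent boundary faces from degrading the interior jump control.
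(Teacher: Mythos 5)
The paper never proves this theorem: it is quoted verbatim from \cite{LewisNeilan12}, so your proposal can only be judged on its own terms. Your reductions are sound as far as they go: splitting the $g$-dependence into the right-hand side, reducing unique solvability of the square finite-dimensional system to definiteness of the symmetric form $a_h$ (strictly, to nonsingularity; positive definiteness is the sufficient direction you actually use, not an equivalence), the parallelogram identity, and the identification via Remark \ref{rem1}(e) of the wind difference with the interior-jump lifting, $(\p_{h,x_i}^+v_h-\p_{h,x_i}^-v_h,\varphi_h)_{\mct}=\bl |n^{(i)}|[v_h],[\varphi_h]\br_{\mce^I}$. Also true (though you do not say how to prove it) is that this lifting vanishes iff all interior jumps vanish: test with $\varphi_h=v_h$ and sum over $i$, using $\sum_i|n_e^{(i)}|\ge 1$. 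But both halves of the actual proof have gaps, one of them fatal.

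The fatal gap is the key inequality of your second part: $\tfrac14\|\nab_h^+v_h-\nab_h^-v_h\|_{L^2(\mct)}^2\ge c\sum_{e\in\mce^I}h_e^{-1}\|[v_h]\|_{L^2(e)}^2$ with $c$ independent of $h$ is \emph{false}. The wind difference pairs $[v_h]$ only against jumps $[\varphi_h]$ of test functions, so it is a discrete divergence of the jump field and telescopes on staircase data. Concretely, in one dimension (intervals are simplices; the mesh below is quasi-uniform and every element has at most one boundary face) take $r=0$, a uniform mesh with $N=1/h$ cells $K_j$, and $v_h|_{K_j}=jh$. Every interior jump equals $-h$, so writing $\varphi_{h,j}:=\varphi_h|_{K_j}$,
\begin{align*}
\big(\p_{h,x}^+v_h-\p_{h,x}^-v_h,\varphi_h\big)_{\mct}
=\sum_{j=1}^{N-1}[v_h][\varphi_h]
=-h\sum_{j=1}^{N-1}\big(\varphi_{h,j}-\varphi_{h,j+1}\big)
=-h\big(\varphi_{h,1}-\varphi_{h,N}\big),
\end{align*}
so the wind difference equals $\mp 1$ on the two end cells and vanishes elsewhere: the left side of your inequality is $h/2$, while the right side is $(N-1)h^{-1}h^{2}=1-h$. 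The same telescoping occurs for $r=0$ on Cartesian grids in any dimension, since by Section \ref{sec-4.5} the winds are exactly forward/backward differences. Hence no constant $c$ depending only on shape-regularity, quasi-uniformity and the boundary-face condition exists, and the proposed mechanism (lower bounds for liftings via inverse and trace inequalities) cannot be repaired: for $r\le 1$ in two dimensions the jump map $V_r^h\to\prod_{e\in\mce^I}\pol_r(e)$ is not even surjective by dimension count, which is precisely the cancellation you acknowledge but do not overcome. In the counterexample the interior jumps are in fact controlled by the term you set aside, $\|\nab_{h,0}v_h\|^2$ (of size $1/h$ there, through the boundary trace); any correct proof must make the central-gradient term participate in controlling interior jumps.

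The first part also has a substantive gap. After $[v_h]=0$, the kernel condition $\nab_{h,0}^\pm v_h=0$ reads $(\nab v_h,\bm\varphi_h)_{\mct}=\bl v_h n,\bm\varphi_h\br_{\mce^B}$ for all $\bm\varphi_h\in\bV_r^h$, which forces $\nab v_h=0$ only on elements with \emph{no} boundary face. On a boundary element it forces $\nab v_h$ to equal the local lifting of the trace $v_h n$, which need not vanish: if $\mct$ is a single simplex $K$, the condition reduces to $(v_h,\div\bm\varphi_h)_K=0$ for all $\bm\varphi_h$, whose solution set is all of $\pol_r(K)$ orthogonal to $\pol_{r-1}(K)$ --- nontrivial for every $r$ (all constants when $r=0$). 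So ``forces $v_h$ to be globally constant'' is not a consequence of your conditions, and the single-boundary-face hypothesis is not there merely to ``anchor a constant'': it is what must rule out these normal-direction kernel functions on boundary elements (there one gets $\nab v_h$ parallel to the boundary normal, i.e.\ $v_h=p(x\cdot n_e)$ for a univariate polynomial $p$, and one must show $p$ is constant, then zero), after which one still needs a propagation argument valid on meshes in which every element touches the boundary. This structural lemma is the actual content of the theorem, and your proposal leaves it unproved.
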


\begin{remark}
We emphasize that problem \eqref{AltPoisson} is well-posed
without added penalty terms.
As far as we are aware, this is the first symmetric 
DG method that has this property in any dimension (cf.~\cite{OdenEtal,LarsonNiklasson04,BurmanEtal07}).
\end{remark}

\begin{theorem}[\cite{LewisNeilan12}]\label{L2EstimateTheorem}
Let $u_h$ be the solution to \eqref{AltPoisson}, 
$u \in H^{s+1}(\Omega)$ be the solution to \eqref{Poisson} and $\gamma_{\max} = \max_{e\in \mce} h_e^{-1}\eta_1(e)$.
Then $u_h$ satisfies the following estimate provided $\gamma_{\min}>0$:
\begin{align}\label{L2EstimateLine1}
\|u-u_h\|_{L^2(\Ome)}\le C h^{s+1}\Big(\sqrt{\gamma_{\max}}+\frac{1}{\sqrt{\gamma_{\min}}}\Big)^2|u|_{H^{s+1}(\Ome)}\qquad
(1\le s\le r), 
\end{align}
and if the triangulation is quasi-uniform and $\gamma_{\min}>-C_*$, then there holds
\begin{align}\label{L2EstimateLine2}
\|u-u_h\|_{L^2(\Ome)}\le C h^{s+1}\Big(\sqrt{|\gamma_{\min}|} +\frac{1}{\sqrt{C_*+\gamma_{\min}}}\Big)^2 |u|_{H^{s+1}(\Ome)}, 
\end{align}
where $C$ denotes a generic positive constant independent of $h$, and $C_*$ is the positive constant from 
Theorem~{\rm\ref{UniquenessTheorem}}.
\end{theorem}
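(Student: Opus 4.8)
The plan is to treat \eqref{AltPoissonPrimal} as a conforming variational problem on $V^h_r$ and to run the standard two-step finite element error analysis: an energy-norm estimate obtained from coercivity together with Galerkin orthogonality, followed by an Aubin--Nitsche duality argument for the $L^2$ estimate. First I would introduce the symmetric bilinear form
\begin{align*}
a_h(w,v) := \frac12\Big( (\nab_h^+ w,\nab_h^+ v)_{\mct} + (\nab_h^- w,\nab_h^- v)_{\mct}\Big)
+ \bl \eta_1 [w],[v]\br_{\mce^I} + \bl \eta_1 w, v\br_{\mce^B},
\end{align*}
so that \eqref{AltPoissonPrimal} reads $a_h(u_h,v_h) = (f,v_h)_{\mct} + \ell_g(v_h)$, where $\ell_g$ collects the boundary contributions of $g$ arising from $\nab_{h,g}^\pm$ and $j_{h,g}$, and the associated energy norm
\begin{align*}
\|v\|_h^2 := \frac12\Big(\|\nab_h^+ v\|_{L^2(\Ome)}^2 + \|\nab_h^- v\|_{L^2(\Ome)}^2\Big)
+ \bl \eta_1 [v],[v]\br_{\mce^I} + \bl \eta_1 v,v\br_{\mce^B}.
\end{align*}
Since $u\in H^{s+1}(\Ome)\subset H^2(\Ome)$ with $s\ge 1$, the exact solution is continuous across interior faces, satisfies $u=g$ on $\p\Ome$, and its one-sided discrete gradients reduce to the $L^2$-projection of $\nab u$ by Proposition \ref{prop0a}; substituting $u$ into \eqref{AltPoissonPrimal} then shows it solves the same variational identity, which yields the Galerkin orthogonality $a_h(u-u_h,v_h)=0$ for all $v_h\in V^h_r$.

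Next I would establish the two ingredients of a C\'ea-type estimate. Coercivity $a_h(v_h,v_h)\ge \alpha \|v_h\|_h^2$ is precisely the content of Theorem \ref{UniquenessTheorem}; under the stated geometric hypothesis the constant $\alpha$ may be taken proportional to $C_*+\gamma_{\min}$ (or to $\gamma_{\min}$ when $\gamma_{\min}>0$), which is the source of the factor $1/\sqrt{C_*+\gamma_{\min}}$ (resp.\ $1/\sqrt{\gamma_{\min}}$) in the final bounds. For boundedness I would prove $|a_h(w,v_h)|\le C\|w\|_{h,*}\|v_h\|_h$, where $\|\cdot\|_{h,*}$ augments $\|\cdot\|_h$ by weighted normal-trace terms of the broken gradient needed to control the face pairings; the relevant trace inequality carries the weight $\gamma_{\max}$, hence the $\sqrt{\gamma_{\max}}$ factor. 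Choosing the comparison function to be the $L^2$-projection (or a standard interpolant) $\Pi_h u$ and invoking orthogonality, I would bound $\|u_h-\Pi_h u\|_h^2$ by $a_h(u_h-\Pi_h u,u_h-\Pi_h u)/\alpha = a_h(u-\Pi_h u,u_h-\Pi_h u)/\alpha$ and then by continuity, so that standard interpolation estimates give $\|u-u_h\|_h \le C(\sqrt{\gamma_{\max}}+1/\sqrt{\gamma_{\min}})\,h^s|u|_{H^{s+1}(\Ome)}$.

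For the $L^2$ estimate I would argue by duality. Let $\phi$ solve the adjoint Poisson problem $-\Del\phi=u-u_h$ in $\Ome$ with $\phi=0$ on $\p\Ome$; by $H^2$ elliptic regularity on the (assumed convex or smooth) domain, $\|\phi\|_{H^2(\Ome)}\le C\|u-u_h\|_{L^2(\Ome)}$. Testing the dual problem, using the consistency of $a_h$ for the smooth solution $\phi$, and then Galerkin orthogonality to subtract an interpolant $\Pi_h\phi$, I would write
\begin{align*}
\|u-u_h\|_{L^2(\Ome)}^2 = a_h(u-u_h,\phi) = a_h(u-u_h,\phi-\Pi_h\phi)
\le C\|u-u_h\|_{h,*}\,\|\phi-\Pi_h\phi\|_h.
\end{align*}
The first factor is the energy estimate above (one power of $h^s$ and one factor $\sqrt{\gamma_{\max}}+1/\sqrt{\gamma_{\min}}$), while interpolation of $\phi\in H^2(\Ome)$ gives $\|\phi-\Pi_h\phi\|_h\le C h\,(\sqrt{\gamma_{\max}}+1/\sqrt{\gamma_{\min}})\|\phi\|_{H^2(\Ome)}$; combining these and absorbing $\|\phi\|_{H^2(\Ome)}\le C\|u-u_h\|_{L^2(\Ome)}$ yields the rate $h^{s+1}$ together with the squared penalty factor of \eqref{L2EstimateLine1}. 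Running the same computation with the coercivity constant from the quasi-uniform part of Theorem \ref{UniquenessTheorem} produces \eqref{L2EstimateLine2}.

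I expect the main obstacle to be coercivity and its sharp dependence on $\gamma_{\min}$: unlike classical interior-penalty methods, the dual-wind form is coercive even for vanishing or slightly negative penalty, so the lower bound cannot come from the jump term alone but must exploit a discrete inequality showing that the sum of the two one-sided gradient energies controls the broken $H^1$ seminorm and the boundary jumps under the one-boundary-face geometric condition. Carefully tracking how the constants $C_*$, $\gamma_{\min}$, and $\gamma_{\max}$ propagate through both the energy estimate and the duality step---so that they appear squared in the final bounds---is the delicate bookkeeping that the analysis in \cite{LewisNeilan12} carries out.
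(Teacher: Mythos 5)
You should first note that this paper contains no proof of Theorem \ref{L2EstimateTheorem}: both it and Theorem \ref{UniquenessTheorem} are quoted from \cite{LewisNeilan12}, so your proposal can only be measured against the analysis in that reference. Your overall skeleton --- an energy-norm estimate followed by Aubin--Nitsche duality, with the constants $\gamma_{\min}$, $\gamma_{\max}$, $C_*$ tracked through both steps --- is the right one, and your closing paragraph correctly isolates coercivity of the dual-wind form as the distinctive ingredient. But there is a genuine gap at the foundation of your argument: the exact Galerkin orthogonality you assert is false, because the DWDG form is built from \emph{projected} gradients and is therefore not exactly consistent. Indeed, take $u\in H^{s+1}(\Ome)$, $s\ge 1$, so that $j_{h,g}(u)=0$ and, by Proposition \ref{prop0a}, $\nab_{h,g}^{\pm}u=\cbP_r^h\nab u$. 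Substituting $u$ into the left-hand side of \eqref{AltPoissonPrimal} and using Theorem \ref{SecondCorollary} (with $\bv_h=\cbP_r^h\nab u\in\bV_r^h$) gives, for every $v_h\in V_r^h$,
\begin{align*}
\frac12\Big( (\nab_{h,g}^+ u,\nab_{h,0}^+ v_h)_{\mct}+(\nab_{h,g}^- u,\nab_{h,0}^- v_h)_{\mct}\Big)
&= \big(\cbP_r^h \nab u, \nab_{h,0} v_h\big)_{\mct}
= -\big(\Div_h \cbP_r^h \nab u, v_h\big)_{\mct}\\
&= (f,v_h)_{\mct} - \bl \{\cbP_r^h\nab u - \nab u\}\cdot n, [v_h]\br_{\mce},
\end{align*}
where the last equality follows by expanding $\Div_h$ (recall $\gamma_i^\pm=0$ in Section \ref{sec-6}), replacing $(\cbP_r^h\nab u,\nab v_h)_{\mct}$ by $(\nab u,\nab v_h)_{\mct}$ (legitimate since $\nab v_h\in\bV_r^h$ elementwise), and integrating by parts elementwise using the single-valuedness of $\nab u\cdot n$ across interior faces. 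The face term on the right does not vanish for finite $r$, so $u$ does \emph{not} satisfy the discrete equations, and $a_h(u-u_h,v_h)\neq 0$.

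Consequently both your C\'ea step and your duality identity $\|u-u_h\|_{L^2(\Ome)}^2=a_h(u-u_h,\phi-\Pi_h\phi)$ are invalid as written. The analysis must be of Strang type: the consistency functional $v_h\mapsto\bl\{\cbP_r^h\nab u-\nab u\}\cdot n,[v_h]\br_{\mce}$, and its analogue for the dual solution $\phi$, must be estimated by trace and projection bounds and then absorbed against the jump penalty via $\bl w,[v_h]\br_e\le \eta_1^{-1/2}\|w\|_{L^2(e)}\,\eta_1^{1/2}\|[v_h]\|_{L^2(e)}$; this is precisely where the factors $1/\sqrt{\gamma_{\min}}$ in \eqref{L2EstimateLine1} and $1/\sqrt{C_*+\gamma_{\min}}$ in \eqref{L2EstimateLine2} originate, so omitting this step loses not only rigor but the very structure of the stated constants. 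Two further repairs are needed. First, in the regime $-C_*<\gamma_{\min}\le 0$ covered by \eqref{L2EstimateLine2}, your proposed energy norm is not a norm (the terms $\bl\eta_1[v],[v]\br_{\mce^I}$ can be negative); coercivity there must be expressed in a norm with $h_e^{-1}$-weighted jumps, and the consistency term must then be absorbed into the dual-wind gradient energies rather than the penalty. Second, Theorem \ref{UniquenessTheorem} as stated asserts only unique solvability, not the quantitative lower bound $a_h(v_h,v_h)\gtrsim(C_*+\gamma_{\min})\|v_h\|_h^2$ that your argument invokes; that inequality is itself one of the main results proved in \cite{LewisNeilan12} and cannot be cited from the present paper.
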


\begin{remark}
In light of \eqref{e4.9} and \eqref{e4.10}, we can see that
when approximating with piecewise constant basis functions 
on Cartesian grids, 
the DWDG method coincides with the standard 
finite difference method for Poisson's equation while the LDG method coincides with a staggered 
finite difference method for Poisson's equation that uses coarser second derivative approximations.
\end{remark}

\subsection{Fourth order linear PDEs}
In this subsection we show how to use the discrete differential calculus
to develop DG methods for fourth order linear PDEs.  For simplicity, 
we focus our derivation to the model biharmonic problem with
clamped boundary conditions:
\begin{subequations}
\label{biharmonic}
\begin{alignat}{2}
\Del^2 u & = f\qquad &&\text{in }\Ome,\\
u &= g\qquad &&\text{on }\p\Ome,\\
\frac{\p u}{\p n } & = q &&\text{on }\p\Ome,
\end{alignat}
\end{subequations}
where we assume that $f\in L^2(\Ome)$ and $g,q\in L^2(\p\Ome)$.
The biharmonic problem with other boundary conditions (e.g., simply supported
or Cahn-Hilliard type) are briefly discussed at the end of
the section.

To develop DG methods using the discrete differential machinery,
we first need to define some additional discrete operators corresponding
to the biharmonic operator.  Given the two functions $g$ and $q$
defined on the boundary, we set
\begin{align}
\label{DiscreteBiharmonicDef}
\Del_{h,g,\bq}:&=\Div_{h,\bq}\nab_{h,g}\quad \text{with }\bq = q n,
\intertext{and}
\Del^2_{h,g,\bq}:&=\Del_h \big(\Del_{h,g,\bq})= \Div_h \nab_h \Div_{h,\bq} \nab_{h,g}.
\end{align} 
In addition, we define the operator $r_{h,q}(\cdot): W^{2,1}(\mct)\cap C^1(\mct)\to V_r^h$ by
\begin{align*}
\big(r_{h,q}(v),\varphi_h\big)_{\mct} = \bl \eta_2 [\p v/\p n],[\p \varphi_h/\p n]\br_{\mce^I} 
+ \bl \eta_2 (\p v/\p n-q),\p \varphi_h/\p n\br_{\mce^B},
\end{align*}
which we will use to enforce the Neumann boundary condition weakly in the DG formulation.

The DG method for \eqref{biharmonic} is then defined as seeking $u_h\in V^h_r$ such that
\begin{align}
\label{Dbiharmonic}
\Del^2_{h,g,\bq} u_h +j_h(u_h)+r_h(u_h) = \cP_r^h f.
\end{align}
Similar to the discussion in Section \ref{sec-5-1}, we may write
the DG method in various mixed forms (with up to six unknown variables).
Instead, we focus mainly on the primal formulation.

By \eqref{DiscreteBiharmonicDef} and Theorem \ref{SecondCorollary}, we have
\begin{align*}
\big(\Del^2_{h,g,\bq} u_h,\varphi_h\big)_{\mct}  
& = -\big(\nab_h \Del_{h,g,\bq} u_h,\nab_{h,0} \varphi_h\big)_{\mct}
 = \big(\Del_{h,g,\bq} u_h,\Div_{h,0} \nab_{h,0} \varphi_h\big)_{\mct}\\
& = \big(\Del_{h,g,\bq} u_h,\Del_{h,0,0} \varphi_h\big)_{\mct}.
\end{align*}
Thus, we may write \eqref{Dbiharmonic} in its primal formulation as follows:  Find $u_h\in V^h_r$ such that
\begin{align}
\big(\Del_{h,g,\bq} u_h,\Del_{h,0,0} \varphi_h\big)_{\mct} 
&+ \big(j_{h,g}(u_h),\varphi_h\big)_{\mct}+\big(r_{h,\bq}(u_h),\varphi_h\big)_{\mct}\\
& \nonumber
= (f,\varphi_h\big)_{\mct}\qquad \forall \varphi_h\in V^h_r.
\end{align}

\begin{remark}
The DG method \eqref{Dbiharmonic} closely 
resembles the local continuous discontinuous Galerkin (LCDG)
method proposed in \cite{HuangEtal10} (also see \cite{WellsDung07}).
Here, the authors consider a mixed formulation
of the biharmonic problem with the Hessian of $u$ as an additional unknown.
The derivation of the LCDG method closely resembles the derivation
of the LDG method for the Poisson problem; the main difference is that,
as the name suggests, the LCDG method uses continuous finite element spaces.
\end{remark}

\begin{theorem}\label{BiharThm}
Suppose that $\eta_1>0$ and $\eta_2$ is non-negative.
Then there exists a unique $u_h\in V^h_r$ satisfying \eqref{Dbiharmonic}.
\end{theorem}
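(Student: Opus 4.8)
The plan is to use the finite dimensionality of $V^h_r$: problem \eqref{Dbiharmonic} is a square linear system on $V^h_r$, so existence of a solution for every right-hand side is equivalent to uniqueness. Hence it suffices to show that the homogeneous problem — $f=0$, $g=0$, $q=0$ — admits only $u_h=0$. Since we are in the regime $\gamma_i=0$ assumed throughout Section~\ref{sec-6}, the hypothesis $\gamma_i^+=-\gamma_i^-$ of Theorems~\ref{FirstCorollary}--\ref{SecondCorollary} is automatically met, which I will use below.

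First I would work with the primal formulation and test it against $\varphi_h=u_h$. With $g=0$ and $\bq=\bm{0}$ the leading bilinear term collapses to $\|\Del_{h,0,0}u_h\|_{L^2(\mct)}^2$, while by the definitions of the penalty operators (taken with homogeneous data) the remaining two terms become $\bl\eta_1[u_h],[u_h]\br_{\mce}$ and $\bl\eta_2[\p u_h/\p n],[\p u_h/\p n]\br_{\mce}$, where on boundary edges $[u_h]=u_h$ and $[\p u_h/\p n]=\p u_h/\p n$. Because $\eta_1>0$ and $\eta_2\ge0$, all three contributions are nonnegative and sum to zero, so each vanishes. The $\eta_1$-term, together with $\eta_1>0$, forces $[u_h]=0$ on every $e\in\mce$; thus $u_h$ is continuous across interior edges and vanishes on $\p\Ome$, i.e.\ $u_h\in\cV_h\cap H^1_0(\Ome)$. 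I expect to need no information from the $\eta_2$-term, which is exactly why $\eta_2\ge0$ (rather than $\eta_2>0$) is enough.

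The crux is to turn $\Del_{h,0,0}u_h=0$ into $\nab u_h=0$. Since $u_h\in V^h_r\cap H^1(\Ome)$ with $u_h|_{\p\Ome}=0$, the boundary correction in Definition~\ref{def1b} drops out, so $\nab_{h,0}u_h=\nab_h u_h$; and by Corollary~\ref{cor4.1} (equivalently Proposition~\ref{prop0a}) one has $\nab_h u_h=\nab u_h=:\bq_h\in\bV_r^h$, the genuine piecewise gradient. Writing $\Del_{h,0,0}u_h=\Div_{h,\bm{0}}\nab_{h,0}u_h=\Div_{h,\bm{0}}\bq_h$ and invoking the adjoint identity of Theorem~\ref{SecondCorollary}, I would pair the relation $\Del_{h,0,0}u_h=0$ with $u_h$ to get
\[
0=\big(\Div_{h,\bm{0}}\bq_h,u_h\big)_{\mct}=-\big(\bq_h,\nab_h u_h\big)_{\mct}=-\|\bq_h\|_{L^2(\mct)}^2,
\]
so $\nab u_h=0$ on each element. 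A continuous piecewise polynomial with elementwise-vanishing gradient is constant on each connected component of $\Ome$, and since $u_h=0$ on $\p\Ome$ this constant is $0$; therefore $u_h\equiv0$. Uniqueness for the homogeneous problem then yields existence and uniqueness for \eqref{Dbiharmonic} by the dimension argument.

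I anticipate the main obstacle to be the bookkeeping of the third paragraph: one must verify carefully that, for the continuous boundary-vanishing function $u_h$, the boundary-data operators $\nab_{h,0}$ and $\Div_{h,\bm{0}}$ collapse to (and pair correctly with) their plain counterparts, and that the adjoint identity of Theorem~\ref{SecondCorollary} applies verbatim. Once these reductions are in place, the energy argument closes cleanly — and, pleasingly, without ever penalizing the normal-derivative jumps.
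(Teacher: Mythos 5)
Your proof is correct, and while it rests on the same energy mechanism as the paper's, the execution is genuinely different. The paper works entirely inside the four-field mixed system \eqref{DBiharMixed}: it introduces the auxiliary unknowns $\bq_h=\nab_{h,0}u_h$, $v_h=\Div_{h,0}\bq_h$, $\bz_h=\nab_h v_h$ and the bilinear forms $b$, $b_I$, $c$, and extracts the key identity $\|v_h\|_{L^2(\Ome)}^2=-c(u_h,u_h)\le 0$ by pairing and combining the four equations; you obtain the same information ($\Del_{h,0,0}u_h=0$ and vanishing of all penalty terms) in one stroke by testing the primal formulation (derived in the paper just before the theorem, and valid under the standing assumption $\gamma_i=0$, as you correctly note) with $\varphi_h=u_h$. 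The second halves diverge more substantially: the paper proves $\bq_h=0$ by observing that $b(\bq_h,u_h)=b_I(\bq_h,u_h)$ once $u_h$ vanishes on boundary edges, and then deduces $\nab u_h|_K=0$ from $b_I(\bmu_h,u_h)=0$ for all $\bmu_h$, never identifying the discrete gradient with the genuine one until the final line; you instead use the continuity and boundary vanishing of $u_h$ (forced by the $\eta_1$-penalty) to invoke Definition \ref{def1b} and Corollary \ref{cor4.1}, so that $\nab_{h,0}u_h$ collapses to the piecewise gradient $\nab u_h$, after which a single application of the adjoint identity of Theorem \ref{SecondCorollary} gives $\|\nab u_h\|_{L^2(\Ome)}^2=0$. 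What your route buys is brevity and transparency: it exploits the characterization results of Section \ref{sec-4} (which the paper develops but does not use in this proof) to reduce the discrete objects to classical ones as soon as the jumps vanish. What the paper's route buys is self-containment within the mixed framework: it runs purely on the integration-by-parts machinery of Theorems \ref{FirstCorollary}--\ref{SecondCorollary}, the same template used for the other mixed-form analyses in that section.
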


\begin{proof}
Since the problem is finite dimensional and linear, it suffices
to show that if $f=0$, $g=0$ and $q=0$, then the solution
is  identically zero.

To this end, we set $\bq_h = \nab_{h,0} u_h$, $v_h = \Div_{h,0} q_h$, and $\bz_h = \nab_h v_h$
so that $\Div_h \bz_h +j_{h,0}(u_h)+r_{h,0}(u_h) =0$.
To ease notation, we define the bilinear forms 
\begin{align*}
b(\bmu_h,\varphi_h) &:= -\big(\Div \bmu_h,\varphi_h\big)_{\mct} 
+\bl [\bmu_h]\cdot n,\{\varphi_h\}\br_{\mce},\\
b_I(\bmu_h,\varphi_h) &:= -\big(\Div\bmu_h,\varphi_h\big)_{\mct}
+\bl [\bmu_h]\cdot n,\{\varphi_h\}\br_{\mce^I},\\
c(\psi_h,\varphi_h) &: = \big(j_{h,0}(\psi_h),\varphi_h\big)_{\mct}+\big(r_{h,0}(\psi_h),\varphi_h\big)_{\mct}.
\end{align*}
It is then easy to verify that $(\Div_h \bmu_h,\varphi_h)_{\mct} = -b_I(\bmu_h,\varphi_h)$
and $\big(\Div_{h,0} \bmu_h,\varphi_h\big)_{\mct} = -b(\bmu_h,\varphi_h)$
for all $\bmu_h\in \bV_r^h$ and $\psi_h\in V^h_r$.
Furthermore, by Theorem \ref{SecondCorollary}, we have
$\big(\bmu_h,\nab_h \psi_h\big)_{\mct} = b(\bmu_h,\psi_h)$ and $
\big(\bmu_h,\nab_{h,0} \psi_h\big)_{\mct} = b_I(\bmu_h,\psi_h)$.
It then follows that we may write \eqref{Dbiharmonic} in the following mixed-form:
\begin{subequations}
\label{DBiharMixed}
\begin{alignat}{2}
\label{DBiharMixedA}
(\bq_h,\bmu_h)_{\mct}-b_I(\bmu_h,u_h) &= 0\qquad &&\forall \bmu_h\in \bV_r^h,\\
\label{DBiharMixedB}
(v_h,\psi_h)_{\mct}+b(\bq_h,\psi_h) &= 0\qquad &&\forall \psi_h \in V_r^h,\\
\label{DBiharMixedC}
(\bz_h, \btau_h)_{\mct}-b(\btau_h,v_h)& =0\qquad &&\forall \btau_h\in \bV_r^h,\\
\label{DBiharMixedD}
-b_I(\bz_h,\varphi_h)+c(u_h,\varphi_h) &=0\qquad &&\forall \varphi_h\in V_r^h.
\end{alignat}
\end{subequations}
Setting $\bmu_h = \bz_h$ in \eqref{DBiharMixedA} and $\varphi_h = u_h$
in \eqref{DBiharMixedD}, we have
\begin{align*}
(\bq_h,\bz_h)_{\mct}-b_I(\bz_h,u_h) &= 0,\quad 
\text{and}\quad
-b_I(\bz_h,u_h)+c(u_h,u_h) =0.
\end{align*}
Therefore, subtracting the two equations we get
$(\bq_h,\bz_h)_{\mct}-c(u_h,u_h) =0$.  Next, 
we set $\btau_h = \bq_h$ in \eqref{DBiharMixedC}
and $\psi_h = v_h$ in \eqref{DBiharMixedB} to obtain
\begin{align*}
(\bz_h, \bq_h)_{\mct}-b(\bq_h,v_h) =0,\quad \text{and}\quad
(v_h,v_h)_{\mct}+b(\bq_h,v_h) = 0.
\end{align*}
Adding the two equations yields $\|v_h\|_{L^2(\Ome)}^2 = -(\bz_h,\bq_h)_{\mct} = -c(u_h,u_h)\le 0$.
Therefore, $v_h\equiv 0$ and $c(u_h,u_h)\equiv 0$.  In particular, $u_h$ vanishes
on all of the boundary edges.  Since $v_h \equiv 0$, we also have $\bz\equiv 0$
by \eqref{DBiharMixedC}.  Setting $\bmu_h = \bq_h$ in \eqref{DBiharMixedA}, and
$\psi_h = u_h$ in \eqref{DBiharMixedB}, we have
\begin{align*}
\|\bq_h\|_{L^2(\Ome)}^2 -b_I(\bq_h,u_h) & = 0,\\
b(\bq_h,u_h) & = 0.
\end{align*}
Since $u_h$ vanishes on the boundary edges, $b(\bq_h,u_h) = b_I(\bq_h,u_h)$.
It then easily follows that $\bq_h \equiv 0$.  Finally, we have $b_I(\bmu_h,u_h) =0$
for all $\bmu_h\in \bV_r^h$.  This in turn implies that 
\begin{align*}
0 = (\bmu_h,\nab u_h)_{\mct} -\bl \{\bmu_h\}\cdot n,[u_h]\br_{\mce} = (\bmu_h,\nab u_h)_{\mct}\quad \forall \bmu_h\in \bV_r^h.
\end{align*}
Therefore, $\nab u_h|_K =0$ on all $K\in \mct$.  Since $[u_h]|_e =0$ across all edges, we
conclude that $u_h\equiv 0$.
\end{proof}

\begin{remarks}\
\begin{enumerate}
\item[(a)] 
To obtain optimal order error estimates, we expect that the penalty
parameters must scale like $\eta_1 = \mathcal{O}(h^{-3})$ and $\eta_2 = \mathcal{O}(h^{-1})$.

\item[(b)] 
The construction of DG schemes
with other types of boundary conditions can easily
be constructed by specifying the boundary data
to different discrete differential operators.  
For example, if simply supported plate
boundary conditions $u=g$ and $\Del u = q$
are provided, then the corresponding discrete biharmonic operator is
$\Del_{h,q}\Del_{h,g}= \Div_h \nab_{h,q} \Div_h \nab_{h,g} u_h$.
On the other hand, if Cahn-Hilliard-type boundary conditions
$\p u/\p n = g$ and $\p\Del u/\p n=q$ are given, then the discrete
biharmonic operator is $\Div_{h,{\bm q}} \nab_h \Div_{h,{\bm g}} \nab_h u_h$,
where ${\bm q} = q n $ and ${\bm g} = g n$.
\end{enumerate}
\end{remarks}

\subsection{Quasi-linear second order PDEs}\label{sec-6.3}
\subsubsection{\bf The $p$-Laplace equation}
We now extend the discrete differential framework to 
some non-linear elliptic problems.  Although we can 
formulate the method for a very general class
of quasi-linear PDEs, we shall focus our attention
to a prototypical example, the $p$-Laplace equation\ $(2\le p<\infty)$:
\begin{subequations}
\label{pLaplace}
\begin{alignat}{2}
-\Div \big(|\nab u|^{p-2}\nab u\big) & = f\qquad &&\text{in }\Ome,\\
u & = g\qquad &&\text{on }\partial \Ome.
\end{alignat}
\end{subequations}

Similar to the Poisson problem, the DG method for \eqref{pLaplace}
is obtained by simply replacing the {\em grad} and {\em div} operators by
their respective discrete versions and adding a stability term
to ensure that the resulting bilinear form is coercive over $V^h_r$.
In this case, the discrete problem reads: find $u_h\in V^h_r$ satisfying
\begin{align}\label{DpLaplace}
-\Div_h \big(|\nab_{h,g} u_h|^{p-2}\nab_{h,g} u_h\big) +j^{(p)}_{h,g}(u_h)  & = \cP^h_r f,
\end{align}
where $j^{(p)}_{h,g}(\cdot)$ is defined by 
\begin{align}
\label{jphDef}
\big(j^{(p)}_{h,g}(v),\varphi_h)_{\mct} 
= \bl \eta_1 \big|[v]\big|^{p-2} [v],[\varphi_h]\br_{\mce^I}
+\bl \eta_1 |v|^{p-2} (v-g),\varphi_h\br_{\mce^B}  
\end{align}
for all $\varphi_h\in V^h_r$.  Here, $\eta_1>0$ is a penalization parameter.
Similar to the discrete Poisson problem, the discretization has several interpretations.
By  the definition of the discrete divergence and gradient operators, we can write \eqref{DpLaplace}
in the mixed formulation
\begin{align}\label{pLaplaceMixed}
\big(|\bq_h|^{p-2}\bq_h,\nab \varphi_h\big)_{\mct} 
&-\Bl \{|\bq_h|^{p-2}\bq_h\}\cdot n,[\varphi_h]\Br_{\mce} 
+\big(j_{h,g}^{(p)}(u_h),\varphi_h\big)_{\mct} \\
&\qquad \nonumber 
= (f,\varphi_h)_{\mct} \qquad\forall \varphi_h\in V^h_r, 
\end{align}
where $\bq\in \bV^h_r$ satisfies \eqref{dualmixed1}.  We emphasize that the gradient appearing 
in the left-hand side of equation \eqref{pLaplaceMixed} is the piecewise gradient.

\begin{remark}
In \cite{BurmanErn08}, Burman and Ern proposed and analyzed
the following LDG method for the $p$-Laplace equation (with $r=1$ and $g=0$):
\begin{align}\label{BurmanErn}
\big(|\nab_{h,0} u_h|^{p-2} \nab_{h,0} u_h ,\nab_{h,0} \varphi_h\big)_{\mct} 
+ \bl \eta_1 \big|[u_h]\big|^{p-2}[u_h],[\varphi_h]\br_{\mce}
= (f,\varphi_h)_{\mct}  
\end{align}
for all $\varphi_h\in V^h_r$. Here, the authors showed the existence and uniqueness
of the DG method \eqref{BurmanErn} provided $\eta_1 = \mathcal{O}(h^{1-p})$.
In addition, Burman and Ern showed that 
the  approximate solutions converge to $u$ strongly in $L^p(\Ome)$,
and $\nab_{h,0} u_h$ converges
to $\nab u$ strongly in $\bL^p(\Ome)$.  Furthermore, in 
the one dimensional setting, numerical experiments indicate
a convergence rate of at least $h^{3/4}$ for $p\in \{3,4,5\}$ and smooth exact
solution.

Clearly, method \eqref{BurmanErn} has a similar structure to \eqref{pLaplaceMixed}, but they
are different methods when $p \neq 2$. Indeed, since $|\nab_{h,g} u_h|^{p-2}\nab_{h,g} u_h \not\in \bV_r^h$, 
we cannot use Theorems {\rm \ref{FirstCorollary}--\ref{SecondCorollary}} and simply write
\begin{align*}
-\big(\Div_h (|\nab_{h,g} u_h|^{p-2}\nab_{h,g} u_h),\varphi_h\big)_{\mct}
 = \big( |\nab_{h,g} u_h|^{p-2}\nab_{h,g} u_h,\nab_{h,0} \varphi_h\big)_{\mct}.
\end{align*}
In the following section, we show by way of numerical experiments
that the DG method \eqref{pLaplaceMixed} converges with optimal order
provided the exact solution is sufficiently smooth.
\end{remark}
 
\subsubsection{\bf Numerical experiments of the $p$-Laplace equation}

In this subsection we perform some numerical experiments to gauge
the effectiveness of the DG method \eqref{DpLaplace}. 
We take the domain to be the unit square $\Ome = (0,1)^2$
and choose the data $f$ such that the exact solution 
is $u = \sin(\pi x_1)\sin(\pi x_2)$ and $p=5$.  In all numerical experiments, 
we take the penalty parameter to be 
$\eta_1 = 20/h^{p-1} = 20/h^4$.

The resulting errors in the cases $r=1$ and $r=2$ are recorded
in Table \ref{pLaplaceTable}.  The table clearly suggests
that the following rates of convergence hold for smooth test problems:
\begin{align*}
\|u-u_h\|_{L^2(\Ome)} = \mathcal{O}(h^{r+1}),\qquad \|\nab u-\nab_h u_h\|_{L^2(\Ome)} = \mathcal{O}(h^r).
\end{align*}

\begin{table}[ht]
{\small \caption{\label{pLaplaceTable}The errors of the computed solution 
and rates of convergence of the DG method \eqref{DpLaplace}
with solution $u=\sin(\pi x_1)\sin(\pi x_2)$ on the unit square with $p=5$
and $\eta = 20/h^4$.}
\begin{tabular}{|c|c|c|c|c|c|}
\hline
& $h$ & $\|u-u_h \|_{L^2}$ & order & $\|\nab u-\nab_h u_h\|_{L^2}$ & order\\
\hline
$r=1$
&1.00E-01	&6.48E-03	&		&2.33E-01	&\\
&5.00E-02	&1.08E-03	&2.59	&1.16E-01	&1.01\\
&2.50E-02	&1.95E-04	&2.47	&5.86E-02	&0.99\\
&1.25E-02	&4.99E-05	&1.97	&2.94E-02	&0.99\\
\hline
$r=2$
&1.00E-01	&2.25E-03	&		&1.36E-02	&\\
&5.00E-02	&2.83E-04	&2.99	&3.01E-03	&2.18\\
&2.50E-02	&3.60E-05	&2.97	&7.30E-04	&2.04\\
&1.25E-02	&4.51E-06	&3.00	&1.80E-04	&2.02\\
\hline
\end{tabular}
}
\end{table}

\subsection{Second order linear elliptic PDEs in non-divergence form}

As a fourth example, we consider second order elliptic PDEs
written in non-divergence form.  Namely, we consider the problem 
of finding a strong solution satisfying
\begin{subequations}
\label{NonVar}
\begin{alignat}{2}
\label{NonVar1}
-\tilde{\bA}:D^2 u & = f\qquad &&\text{in }\Ome,\\
\label{NonVar2}
u & = g\qquad &&\text{on }\p\Ome.
\end{alignat}
\end{subequations}
Here, $\tilde{\bA}\in [C^{0,\alpha}(\Ome)]^{d\times d}\ (\alpha\in (0,1))$ is a given
positive definite matrix, and $\tilde{\bA}:D^2 u$ denotes
the Frobenius inner product, i.e., 
$\tilde{\bA}:D^2 u = \sum_{i,j=1}^d A_{i,j} \frac{\p^2 u}{\p x_i\p x_j}$.

We note that if $\tilde{\bA}$ is sufficient smooth, e.g. 
if $\Div \tilde{\bA}\in \bL^\infty(\Ome)$, 
then we may write the PDE \eqref{NonVar1} as 
$-\Div (\tilde{\bA}\nab u) +(\Div \tilde{\bA})\cdot \nab u = f$.  We can then apply
any of the standard numerical methods for convection-diffusion
equations to problem \eqref{NonVar}.  On the other hand,
if $\tilde{\bm A}$ only has the regularity 
$\tilde{\bA}\in [C^{0,\alpha}]^{d\times d}$, then this argument
fails and the construction of numerical methods is less obvious.
As far as we are aware, only two finite element methods 
have appeared in the literature that addressed the numerical
approximation of problems such as \eqref{NonVar}.
In \cite{JensenSmears12a}  
Jensen and Smears propose a $\pol_1$ finite element method
for the Hamilton-Jacobi-Bellman equation.  To handle
the lack of regularity of the coefficient matrix, the authors
``freeze the coefficients'' element-wise, and then perform 
the usual integration-by-parts technique.  By modifying
the framework of Barles-Sougandidis \cite{BarlesSoug91}, Jensen
and Smears show that the numerical solutions converge
to the exact solution strongly in $H^1$. 
Another finite element method for problem \eqref{NonVar},
which is closely related to ours, is the one proposed by
Lakkis and Pryer in \cite{LakkisPryer12}.  Here, the authors
used the notation of a {\em discrete Hessian} and rewrite
problem \eqref{NonVar} in a mixed form.
The advantage of their approach is that the finite element spaces
are simply the globally continuous Lagrange elements, which are simple
to implement.  A possible disadvantage of their approach is
that the notion of their discrete Hessian is not local, and therefore
writing the problem in its primal form results in a dense stiffness matrix.

To formulate the DG method for the PDE using
the discrete differential calculus framework, we again
replace the continuous differential operators by 
the discrete ones.  In addition, we have to project 
both sides of the equation onto the finite element
space.  This then leads to the following problem:
find a function $u_h\in V^h_r$ satisfying
\begin{align}\label{DNonVar}
-\cP^r_h (\tilde{\bA}:D^2_{h,g} u_h) +j_{h,g}(u_h)= \cP^r_h f
\end{align}
with $j_{h,g}(\cdot)$ defined by \eqref{jhDef}.
Equivalently, the DG method is to find $u_h \in V_r^h$ such that
\begin{align*}
-\cP_r^h (\tilde{\bA}:\tilde{\bm r}_h) +j_{h,g}(u_h)= \cP_h^r f,
\end{align*}
where ${\bm r}_h\in \widetilde{\bV}_h$ satisfies 
\eqref{dualmixed1}--\eqref{dualmixed2}.

\begin{remark}
If the coefficient matrix $\tilde{\bA}$ is constant, then the DG method \eqref{DNonVar}
reduces to
the LDG method for 
the PDE $-\Div (\tilde{\bA}\nab u) = f$ with appropriate
boundary conditions.
\end{remark}

Below we present some numerical test results on the DG method \eqref{DNonVar}
with the following parameters: $\Ome = (-0.5,0.5)^2$, $f=0$ and 
\begin{align} \label{NonDivData}
g = \left\{\begin{array}{cc}
x_1^{-4/3} & \text{if } x_2=0,\\
-x_2^{4/3} & \text{ if }x_1=0,\\
x_1^{4/3}-1 & \text{if }x_2=1,\\
1-x_2^{4/3} & \text{if }x_1=1
\end{array}\right.,
\quad \tilde{\bm A}
=  \frac{16}9\begin{pmatrix}
x_1^{2/3} & -x_1^{1/3} x_2^{1/3}\smallskip\\
-x_1^{-1/3} x_2^{1/3} & x_2^{2/3}
\end{pmatrix}.
\end{align}
It can readily be checked that the exact solution
is given by $u = x_1^{4/3}-x_2^{4/3}\in C^{1,\frac13}(\overline{\Ome})$.
We note that this is a particularly challenging example since 
$\tilde{\bm A}$ is not uniformly elliptic.  
The resulting errors for decreasing values of $h$ are listed
in Table \ref{NonDivTable}, and a computed 
solution and error is depicted in Figure \ref{NonDivFigure}.  
The table clearly indicates the convergence
of the method, although the exact rates of convergence are not obvious.

\begin{table}[ht]
{\small
 \caption{\label{NonDivTable}The errors of the computed solution 
and rates of convergence with $r=1$.}
   {
   \begin{tabular}{|c|c|c|c|c|}
   \hline
   $h$ & $\|u-u_h \|_{L^2}$ & order & $\|\nab u-\nab_h u_h\|_{L^2}$ & order\\
   \hline
1.00E-01		&5.17E-03	&		&1.45E-01	&\\
5.00E-02		&3.49E-03	&0.56	&9.52E-02	&0.60\\
2.50E-02	&2.59E-03	&0.43	&6.50E-02	&0.55\\
1.25E-02	&2.08E-03	&0.32	&4.81E-02	&0.43\\
\hline
\end{tabular}
}
}
\end{table}

\begin{figure}[htbp] 
   \centering
   \includegraphics[scale=0.14]{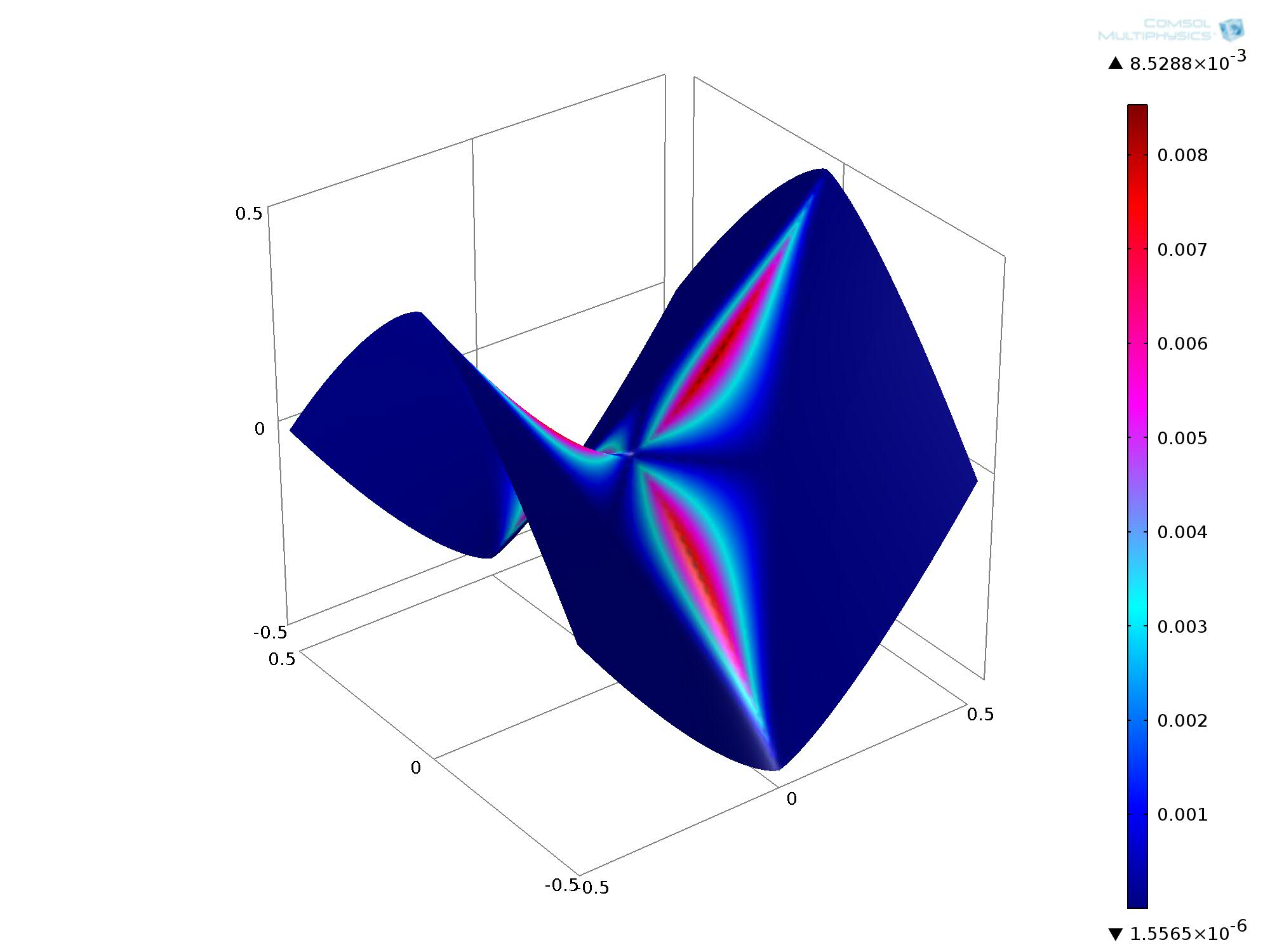} 
   \caption{Computed solution (height) and error (surface)
   of the DG method \eqref{DNonVar} with data \eqref{NonDivData} and parameters $\eta = 20$, $r=1$ and $h=0.0125$.}
   \label{NonDivFigure}
\end{figure}

\subsection{Fully nonlinear time dependent first order PDEs} \label{HJ}
As a fifth example, we consider Hamilton-Jacobi equations.
Namely, we consider the problem of finding the viscosity solution
$u \in \mathcal{A} \subset C^0 \bigl(\Omega \times (0,T]\bigr)$
for the PDE problem
\begin{subequations} \label{nonlinear1}
\begin{alignat}{2}
\label{nonlinear1_1}
u_t + H \left( \nabla u \right) & = 0 \qquad &&\text{in }\Ome\times (0,T],\\
\label{nonlinear1_2}
u & = g\qquad &&\text{on } \Gamma \subset \p\Ome\times (0,T], \\ 
\label{nonlinear1_3}
u & = u_0 \qquad &&\text{on } \Omega \times \{ 0\},
\end{alignat}
\end{subequations}
where the operator $H$ is a continuous and possibly nonlinear function 
and $\mathcal{A}$ is a function class in which the viscosity solution $u$ is 
unique.  We note that the following scheme can also be 
adapted for $H$ a function of $u$, $x$, and $t$.

Let $B(\Ome)$, $BUC(\Ome)$, $USC(\Ome)$ and $LSC(\Ome)$ denote, respectively, 
the spaces of bounded, bounded uniformly continuous, upper semi-continuous 
and lower semicontinuous functions on $\Ome$.
We now recall the well-known existence and uniqueness theorem for the 
corresponding Cauchy problem which was first proved in \cite{Crandall_Lions84}.

\begin{theorem} \label{CL84}
Let $H \in C({\bf R}^d)$, $u_0 \in BUC(\bf{R}^d)$.  Then there is exactly one function
$u \in BUC({\bf R}^d \times [0,T])$ for all $T>0$ such that $u(x,0) - u_0(x)$, and for 
every $\phi \in C^1\left( {\bf R}^d \times (0,\infty) \right)$ and $T>0$, 
if $(x_0, t_0)$ is a local maximum (resp. local minimum) point of $u-\phi$ on ${\bf R}^d \times (0,T]$, 
then
\[
\phi_t (x_0, t_0) + H \left( \nabla \phi (x_0, t_0) \right) \leq 0
\] 
( resp. $\phi_t (x_0, t_0) + H \left( \nabla \phi (x_0, t_0) \right) \geq 0$).
\end{theorem}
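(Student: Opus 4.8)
The plan is to establish the two assertions separately: \emph{uniqueness} via a comparison principle and \emph{existence} via the vanishing viscosity method, with the comparison principle also supplying the compactness needed to pass to the limit. The DG finite element machinery developed earlier in the paper is not germane here, so I would argue entirely from the basic tools of viscosity solution theory on ${\bf R}^d$.

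For uniqueness I would prove the stronger comparison statement: if $u$ is a bounded uniformly continuous viscosity subsolution and $v$ a bounded uniformly continuous viscosity supersolution of \eqref{nonlinear1_1} with $u(\cdot,0)\le v(\cdot,0)$, then $u\le v$ on ${\bf R}^d\times[0,T]$. Uniqueness then follows at once, since two solutions $u_1,u_2$ are mutually a sub- and a supersolution with identical initial data, giving $u_1\le u_2$ and $u_2\le u_1$. The central device is Crandall and Lions' doubling of variables. Assuming for contradiction that $\sigma:=\sup(u-v)>0$, I would introduce
\[
\Phi(x,y,t,s):=u(x,t)-v(y,s)-\frac{|x-y|^2}{2\eps}-\frac{(t-s)^2}{2\eps}-\alpha\bigl(|x|^2+|y|^2\bigr)-\frac{\lambda}{T-t},
\]
where $\eps,\alpha,\lambda>0$ are small parameters: the quadratic terms in $\eps$ tie the spatial and temporal arguments together, the $\alpha$-term restores the coercivity lost on the unbounded domain so that $\Phi$ attains an interior maximum, and the $\lambda/(T-t)$ term keeps the maximizer away from the terminal slice $t=T$. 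At a maximizer $(\hat x,\hat y,\hat t,\hat s)$, the maps $(x,t)\mapsto u(x,t)$ and $(y,s)\mapsto v(y,s)$ are touched from above and below by smooth test functions read off from the penalty terms, so the sub/supersolution inequalities in the hypothesis apply with $\nabla\phi=\frac{\hat x-\hat y}{\eps}+2\alpha\hat x$ on the subsolution side and $\frac{\hat x-\hat y}{\eps}-2\alpha\hat y$ on the supersolution side.

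Subtracting the two inequalities, the $\phi_t$ contributions cancel and leave the positive term $\lambda/(T-\hat t)^2$, while the $H$ contributions must be controlled using only the continuity of $H$ together with the penalization estimates. Here the uniform continuity of $u$ and $v$ enters: it forces $|\hat x-\hat y|^2/\eps\to 0$, hence $|\hat x-\hat y|\to0$, and likewise $\alpha|\hat x|^2,\alpha|\hat y|^2\to0$, so the two arguments of $H$ differ by $2\alpha(\hat x+\hat y)\to0$ and remain in a common bounded set along the limit. Sending $\alpha\to0$ and then $\eps\to0$ in the correct order yields $\lambda\cdot(\text{something positive})\le 0$, contradicting $\sigma>0$. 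For existence I would use vanishing viscosity: after mollifying $H$ and $u_0$ if needed, solve the uniformly parabolic regularization
\[
u^\eps_t+H(\nabla u^\eps)=\eps\,\Del u^\eps,\qquad u^\eps(\cdot,0)=u_0,
\]
which is classically solvable on ${\bf R}^d\times[0,T]$ by standard quasilinear parabolic theory. The crux is to derive bounds on $\|u^\eps\|_\infty$ and on the spatial and temporal moduli of continuity, all uniform in $\eps$ (the spatial modulus from differentiating the equation and a maximum principle, the temporal one then from the equation itself). Arzel\`a--Ascoli produces a subsequence converging locally uniformly to some $u\in BUC({\bf R}^d\times[0,T])$, and the stability of viscosity solutions under local uniform limits shows $u$ solves \eqref{nonlinear1_1} with $u(\cdot,0)=u_0$; the comparison principle then pins down the whole family.

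I expect the comparison principle to be the main obstacle. Guaranteeing that the maximum of $\Phi$ is attained at an interior point, establishing the sharp penalization estimates, and—because $H$ is only assumed continuous rather than Lipschitz—leveraging the uniform continuity of $u$ and $v$ to keep $(\hat x-\hat y)/\eps$ controlled while passing to the limit in the right order, is the delicate heart of the argument. By contrast the a priori parabolic estimates and the stability lemma, though technical, are routine.
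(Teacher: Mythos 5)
The paper does not prove this statement at all: Theorem \ref{CL84} is recalled purely as background, with the proof attributed to Crandall and Lions \cite{Crandall_Lions84}, so there is no internal argument to compare yours against. Your proposal is a reconstruction of the classical proof, and its overall architecture (comparison via doubling of variables for uniqueness; vanishing viscosity plus Arzel\`a--Ascoli and stability under locally uniform limits for existence) is the standard and correct one.

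Two points in your sketch, however, need repair before it closes. First, in the doubling argument your penalty $\lambda/(T-t)$ keeps the maximizer of $\Phi$ away from the terminal slice, but nothing keeps it away from the initial slice $t=0$ (or $s=0$), where the sub/supersolution inequalities cannot be invoked; you must treat that case separately, using $u(\cdot,0)\le v(\cdot,0)$ together with the uniform continuity of $u$ and $v$ and the penalization estimates to show that the value of $\Phi$ at such a maximizer is too small to be compatible with $\sigma>0$. Without this step the contradiction cannot be reached, since the two viscosity inequalities you subtract are only available at interior-in-time maximizers. Second, in the existence half, the claim that one can derive spatial and temporal moduli of continuity for $u^\eps$ uniformly in $\eps$ ``directly'' is not available when $u_0$ is merely $BUC$ and $H$ merely continuous: the Bernstein/maximum-principle gradient bound requires Lipschitz data (and some smoothness of $H$). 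The standard fix is to first carry out your program for Lipschitz $u_0$ and regularized $H$, and then pass to general $u_0\in BUC(\mathbf{R}^d)$ by density, using the $L^\infty$-contraction $\|u_1-u_2\|_{L^\infty}\le \|u_{0,1}-u_{0,2}\|_{L^\infty}$ --- itself a consequence of the comparison principle you establish first --- to see that the solutions associated with Lipschitz approximations of $u_0$ form a Cauchy family in the sup norm. With those two repairs your outline is essentially the classical Crandall--Lions argument that the paper cites.
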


\begin{definition}
The function $u$ whose existence and uniqueness is guaranteed by 
Theorem~{\rm\ref{CL84}} is called the {\em viscosity solution} of the 
Cauchy version of \eqref{nonlinear1}.
\end{definition}

Recently, a nonstandard LDG method was proposed by Yan and Osher 
in \cite{OsherYan11} for approximating the viscosity solution of 
the Hamilton-Jacobi problem \eqref{nonlinear1}.  
The main idea of \cite{OsherYan11} is to approximate the ``left" and ``right"
side derivatives of the viscosity solution and to judiciously combine them 
through a monotone and consistent numerical Hamiltonian (cf. \cite{Shu07})
such as the Lax-Friedrichs numerical Hamiltonian
\begin{equation} \label{LF1}
\hH({{\bq}}^-, {{\bq}}^+) : = H \left( \frac{{{\bq}}^- + {{\bq}}^+}{2} \right)
- \frac{1}{2} \boldsymbol{\beta} \cdot \left(  {\bq}^+ - {\bq}^- \right) , 
\end{equation} 
where $\boldsymbol{\beta} \in \bfR^d$ is an undetermined nonnegative vector 
chosen to enforce the monotonicity property of $\hH$, or the Godunov 
numerical Hamiltonian
\begin{equation} \label{godunov1}
\hH({{\bq}}^-, {{\bq}}^+) : = \ext_{q_1 \in I \left( q^-_1, q^+_1 \right)} \cdots
\ext_{q_d \in I \left( q^-_d, q^+_d \right)} H({\bq}),
\end{equation}
where
\[
	\ext_{v \in I \left( a, b \right)} := \begin{cases}
	\min_{a \leq v \leq b}, & \text{if } a \leq b , \\
	\max_{b \leq v \leq a}, & \text{if } a > b , 
	\end{cases}
\]
for $I(a,b) := \bigl[ \min(a,b) , \max(a,b) \bigr]$. 

\begin{remark}
For $r=0$ and $\boldsymbol{\beta} = \mathbf{1}$ on a uniform rectangular grid, 
the second term in \eqref{LF1} is equivalent to a second order finite 
difference approximation for the negative Laplacian operator scaled by $h$.
Thus, the second term in \eqref{LF1} is called a {\em numerical viscosity}, 
and the method is a direct realization of the vanishing viscosity method from 
PDE theory.  However, for high order elements and variable coefficient 
vector $\boldsymbol{\beta}$, while we do not exactly recover a scaled 
Laplacian operator, we do recover some of the stabilizing properties from 
adding a second-order-like perturbation.
\end{remark}

With the correct choice of discrete derivatives, we can rewrite the
nonstandard LDG method of Yan and Osher \cite{OsherYan11} as follows: 
For each time step $n=1,2,3,\ldots$ with $u_h^0 
:= {\cP}_r^h u_0$, find $u^n_h \in V_r^h$ using the recursive relation
\[
u_h^n = u_h^{n-1} + \Delta t\, {\cP}_r^h \hH \bigl( \nabla_{h,g}^- u_h^{n-1}, 
\nabla_{h,g}^+ u_h^{n-1} \bigr).
\]
In addition to the above Euler time-stepping method with $\hH$ given by 
the Lax-Friedrichs numerical Hamiltonian defined in \eqref{LF1}, 
in \cite{OsherYan11} Yan and Osher also implemented the explicit 
third-order TVD Runge-Kutta time-stepping method given in \cite{Shu_Osher88}. 
Tests include one and two dimensional problems for $r \geq 0$.

\begin{remark}
For $r=0$ on a Cartesian grid, the method reduces to the convergent FD method of Crandall and 
Lions proposed and analyzed in \cite{Crandall_Lions_84}. 
\end{remark}

\subsection{Fully nonlinear second order PDEs} \label{sec-6.6}
As the last example, we consider fully nonlinear second order elliptic PDEs. 
Namely, we consider the problem of finding the viscosity solution 
$u \in \mathcal{A} \subset C^0 (\Omega)$ for the PDE problem
\begin{subequations} \label{nonlinear2}
\begin{alignat}{2} \label{nonlinear2_1}
F [ u ] := F \left( D^2 u , x \right) & = 0 
\qquad &&\text{in }\Ome,\\ 
\label{nonlinear2_2}
u & = g\qquad &&\text{on }\p\Ome,
\end{alignat}
\end{subequations}
where the operator $F$ can be nonlinear in all arguments and $\mathcal{A}$ 
is a function class in which the viscosity solution is unique. Throughout this 
subsection, we assume that $F [v]$ is elliptic for all $v \in \mathcal{A}$, 
$F$ satisfies a comparison principle and problem~\eqref{nonlinear2} has a 
unique viscosity solution $u \in \mathcal{A}$.  The definitions for the 
above terms are given below. We again use the same function and space notations 
from Section~\ref{HJ}. 
We also note that the following scheme can also be adapted for 
$F$ a function of $u$ and $\nabla u$.

For ease of presentation, we write \eqref{nonlinear2} as
\begin{equation} \label{nonlin2}
F(D^2u, x) = 0 \qquad\mbox{in } \oOme, 
\end{equation}
where we have used the convention of writing the boundary condition as a
discontinuity of the PDE (cf. \cite[p.274]{BarlesSoug91}). 
Also for ease of presentation, we assume that $F$ is a continuous function 
and refer the reader to \cite{BarlesSoug91, Feng_Lewis12, Feng_Kao_Lewis11} 
for the case when $F$ is only a bounded function. 

The following three definitions are standard and can be found 
in \cite{Gilbarg_Trudinger01, Caffarelli_Cabre95,BarlesSoug91}.

\begin{definition}
Equation \eqref{nonlin2} is said to be elliptic if for all 
$x\in \oOme$ there holds
\begin{align}\label{elliptic_def}
F(\tilde{\bA}, x) \leq F(\tilde{\bm B}, x) \qquad\forall 
\tilde{\bA},\tilde{\bm B}\in \cS^{d\times d},\, \tilde{\bA}\geq \tilde{\bm B}, 
\end{align}
where $\tilde{\bA}\geq \tilde{\bm B}$ means that $\tilde{\bA}-\tilde{\bm B}$ 
is a nonnegative definite matrix, and $\cS^{d\times d}$ denotes
the set of real symmetric $d\times d$ matrices.
\end{definition}
We note that when $F$ is differentiable, ellipticity can also be 
defined by requiring that the matrix $\frac{\partial F}{\partial D^2 u}$ 
is negative semi-definite (cf. \cite[p. 441]{Gilbarg_Trudinger01}).

\begin{definition}\label{viscosity_def}
A function $u\in C^0(\Ome)$ is called a viscosity subsolution (resp. 
supersolution) of \eqref{nonlin2} if, for all $\varphi\in C^2(\oOme)$,  
if $u-\varphi$ (resp. $u-\varphi$) has a local maximum 
(resp. minimum) at $x_0\in \oOme$, then we have
\[
F(D^2\varphi(x_0),x_0) \leq 0 
\]
(resp. $F(D^2\varphi(x_0), x_0) \geq 0$).
The function $u$ is said to be a viscosity solution of \eqref{nonlin2}
if it is simultaneously a viscosity subsolution and a viscosity
supersolution of \eqref{nonlinear2}.
\end{definition}

\begin{definition}\label{def3.2}
Problem \eqref{nonlin2} is said to satisfy a {\em comparison 
principle} if the following statement holds. For any upper semi-continuous 
function $u$ and lower semi-continuous function $v$ on $\overline{\Omega}$,  
if $u$ is a viscosity subsolution and $v$ is a viscosity supersolution 
of \eqref{nonlin2}, then $u\leq v$ on $\overline{\Omega}$.
\end{definition}

Inspired by the work of Yan and Osher \cite{OsherYan11}, 
the first and second authors of this paper recently proposed in 
\cite{Feng_Lewis12} a nonstandard LDG method for approximating the 
viscosity solution of the fully nonlinear second order problem \eqref{nonlinear2}
in one-dimension.
The main idea of \cite{Feng_Lewis12} is to use all four of the various ``sided" approximations
for the second order derivative, \eqref{s6.6_hessians}, of the viscosity solution
and to judiciously combine them through a g-monotone (generalized
monotone) and consistent numerical operator such as the following 
Lax-Friedrichs-like numerical operator that has been adopted from \cite{Feng_Lewis12} for an 
arbitrary dimensional problem:
\begin{align}\label{LF2}
\hF(\tilde{\bP}^{- -}, \tilde{\bP}^{- +}, \tilde{\bP}^{+ -}, \tilde{\bP}^{+ +}, \xi)  
&:= F\Bigl( \frac{\tilde{\bP}^{- +}+\tilde{\bP}^{+ -}}{2}, \xi \Bigr) \\
&\qquad 
+ \tilde{\bA} : \bigl(\tilde{\bP}^{- -}-\tilde{\bP}^{- +}-\tilde{\bP}^{+ -}+\tilde{\bP}^{+ +} \bigr), \nonumber 
\end{align}
where $\tilde{\bA} \in \bfR^{d \times d}$ is a nonnegative constant matrix that is 
chosen to enforce the g-monotonicity property of $\hF$. The consistency 
of $\hF$ is defined by fulfilling the following property: 
\[
\hF(\tilde{\bP},\tilde{\bP},\tilde{\bP}, \tilde{\bP} , x)
=F(\tilde{\bP}, x) \quad\forall \tilde{\bP} \in \bfR^{d \times d}, x \in \Omega;
\]
and the g-monotonicity requires that $\hF$ is monotone increasing in its
first and fourth arguments (i.e., $\tilde{\bP}^{- -}$, $\tilde{\bP}^{+ +}$) and monotone 
decreasing in its second and third arguments (i.e., $\tilde{\bP}^{- +}$, $\tilde{\bP}^{+ -}$).

Below we give a reformulation of the nonstandard LDG method of \cite{Feng_Lewis12}
using our DG finite element differential calculus machinery. 
To this end, we simply replace the continuous differential operators 
by multiple copies of discrete ones. Due to the lack of integration 
by parts caused by the nonlinearity, we have to project the equation 
onto the DG finite element space.  This then leads to the following scheme 
of finding $u_h \in V_r^h$ such that
\begin{equation} \label{nonlin2_dg}
{\cP}_r^h \, \hF \left( D_{h,g}^{- -} u_h, D_{h,g}^{- +} u_h , D_{h,g}^{+ -} u_h, D_{h,g}^{+ +} u_h ,  x \right) = 0,
\end{equation}
where $D_{h,g}^{- -}$, $D_{h,g}^{- +}$, $D_{h,g}^{+ -}$, $D_{h,g}^{+ +}$ are the four
sided numerical Hessians (with the prescribed boundary data $g$)
defined in Section \ref{sec-3}.
It can be shown that \eqref{nonlin2_dg} is indeed equivalent to 
the LDG scheme of \cite{Feng_Lewis12} in one-dimension.  

\begin{remarks}\
\begin{enumerate}
\item[(a)] When $r=0$ and $d=1$, scheme \eqref{nonlin2_dg} reduces to the  
FD method given in \cite{Feng_Kao_Lewis11}, which was proved to be 
convergent.

\item[(b)] For $r=0$ and $\tilde{\bA} = \tilde{\bm I}_{d\times d}$ 
on a rectangular grid, the second term on the right hand side in \eqref{LF2}
is equivalent to a second order finite difference approximation 
for the biharmonic operator scaled by $h^2$. Thus, the second term
in \eqref{LF2} is called a {\em numerical moment}, 
and the method is a direct 
realization of the vanishing moment method proposed 
in  \cite{FengNeilan09a,FengNeilan09b}.
However, for high order elements and variable coefficient matrices $\tilde{\bA}$, 
while we do not exactly recover a scaled biharmonic operator, we do recover 
some of the stabilizing properties from adding a fourth-order-like perturbation.

\item[(c)] Numerical tests of \cite{Feng_Lewis12} show the above discretization 
can eliminate many, and in some cases all, of the numerical artifacts 
that plague standard discretizations for fully nonlinear second order PDEs
(cf. \cite{Feng_Glowinski_Neilan12} and the references therein).

\item[(d)] Fully discrete schemes of \cite{Feng_Lewis12} for parabolic fully 
nonlinear second order PDEs can also be recast using the DG finite
element differential calculus machinery.

\end{enumerate}
\end{remarks}

To solve the algebraic system \eqref{nonlin2_dg}, a nonlinear solver must be used.  
Numerical tests of \cite{Feng_Lewis12} show that when the initial guess 
for $u_h$ is not too close to a non-viscosity solution of the PDE problem, 
a Newton-based solver performs well as long as the 
solution is not on the boundary of the admissible set $\mathcal{A}$.  
However, in the degenerate case, a split solver based on the DWDG 
discretization for the Poisson equation from Section~\ref{dwdg} and the 
Lax-Friedrichs-like operators in \eqref{LF2} appear to be 
better suited. This new solver for \eqref{nonlin2_dg} is given 
below in Algorithm~\ref{split_dwdg}. We note that this solver 
appears to work well even for some cases when the initial guess 
for $u_h$ is not in $\mathcal{A}$. Thus, the solver uses key tools from 
the discretization to address the issue of conditional uniqueness 
of viscosity solutions.

\begin{algorithm} \label{split_dwdg}

Pick $u_h^{(0)} \in V_r^h$.  Let $\Lambda_{h,g}^{+ -} v$ and $\Lambda_{h,g}^{- +} v$ 
denote the diagonal matrices formed by the diagonals of $D_{h,g}^{+ -} v$ and $D_{h,g}^{-+} v$, respectively, 
and $\Lambda_{h,g}^{- -} v$ and $\Lambda_{h,g}^{+ +} v$ denote the diagonal matrices 
formed by the diagonals of $D_{h,g}^{- -} v$ and $D_{h,g}^{+ +} v$, respectively, for
all $v \in V_r^h$. Let $\widetilde{\boldsymbol{\lambda} \mathbf{I}}$ denote the 
diagonal matrix formed by the vector
$\boldsymbol{\lambda} \in \mathbf{R}^d$. \\ 
For $n = 1,2 ,3, \ldots$, 
\begin{enumerate}
\item[{\rm Step 1:}] For $i=1,2,\ldots, d$, set
\begin{align*}
[G]_i^{(n)} &:= F\Bigl( \bigl((D_{h,g}^{- +}-\Lambda_{h,g}^{- +})/2+ 
(D_{h,g}^{+ -}-\Lambda_{h,g}^{+ -})/2 \bigr) u_h^{(n-1)} 
+\widetilde{\boldsymbol{\lambda}^{(n)}\mathbf{I}},x\Bigr)\\
&\qquad 
+A \Bigl[\Lambda_{h,g}^{- -} u_h^{(n-1)} + \Lambda_{h,g}^{+ +} u_h^{(n-1)} 
- 2 \, \widetilde{\boldsymbol{\lambda}^{(n)}\mathbf{I}} \Bigr]_{ii}
\end{align*}
for a fixed constant ${A} > {0}$ such that $G^{(n)}$ is monotone decreasing 
with respect to $\boldsymbol{\lambda}^{(n)}$.

\item[{\rm Step 2:}] Solve for $\boldsymbol{\lambda}^{(n)} \in \bV_r^h$
such that
\[
\Bigl( [G]_i^{(n)} , \phi_i \Bigr)_{\mathcal{T}_h} = 0 
\]
for all $\phi_i \in V_r^h$,  $i=1,2,\ldots,d$.

\item[{\rm Step 3:}] Solve for $u_h^{(n)}\in V^h_r$ such that
\[
\frac12 \bigl(\nab_{h,g}^+ u_h^{(n)},\nab_{h,0}^+ v_h\bigr)_{\mct}
+\frac12\bigl(\nab_{h,g}^- u_h^{(n)},\nab_{h,0}^- v_h\bigr)_{\mct}
=\sum_{i=1}^d \bigl(\lambda^{(n)}_i, v_h\bigr)_{\mct} 
\]
for all $v_h \in V_r^h$.
\end{enumerate}
Note that we are solving the discretization that results from the choice
$\tilde{\bA} = A\, \tilde{\bm I}_{d\times d}$ in \eqref{LF2}.

\end{algorithm}

The above solver is a fixed point method for the diagonal of the Hessian 
approximation formed by $\bigl( D_{h,g}^{- +} + D_{h,g}^{+ -}\bigr)/2$.
We can see that the nonlinear equation in Step 2 is entirely monotone 
and local in its nonlinear components when $A$ is sufficiently large.
Step 3 is well-defined due to the well-posedness of the DWDG method for 
Poisson's equation.
Lastly, as written, we can see that the numerical moment inspired by the 
vanishing moment methodology serves as the motivation for using 
the diagonal of the Hessian approximation as the fixed-point parameter.
Numerical tests indicate that the solver 
destabilizes numerical artifacts even when they exist (cf. \cite{Feng_Lewis12}).
Thus, the solver directly addresses the issue of conditional uniqueness by 
enforcing the preservation of monotonicity in the Hessian approximation at each iteration.

%

\end{document}